\theoremstyle{definition}
\newtheorem{theorem}{Theorem}[section]
\newtheorem{proposition}[theorem]{Proposition}
\newtheorem{lemma}[theorem]{Lemma}
\newtheorem{definition}[theorem]{Definition}
\newtheorem{remark}[theorem]{Remark}
\newtheorem{corollary}[theorem]{Corollary}
\newtheorem{claim}[theorem]{Claim}
\newcommand{\R}{\mathbf{R}}  
\newcommand{\D}{\mathbf{D}}  
\newcommand{\N}{\mathbf{N}}  
\begin{document}

\def\SSS{\mathfrak S} \def\VB{{\rm VB}} \def\KB{{\rm KB}}
\def\VGamma{{\rm V}\Gamma} \def\SS{\mathcal S} \def\XX{\mathcal X}
\def\Aut{{\rm Aut}} \def\Id{{\rm Id}} \def\YY{\mathcal Y}
\def\I{{\rm I}} \def\II{{\rm II}} \def\III{{\rm III}}
\def\op{{\rm op}}


\title{\bf{Virtual braids from a topological viewpoint}}

\author{\textsc{Bruno A. Cisneros de la Cruz}}

\date{ }

\maketitle

\begin{abstract}
\noindent
  Virtual braids are a combinatorial generalization of braids. We present abstract braids as equivalence classes of braid diagrams on a surface, joining two distinguished boundary components. They are identified up to isotopy, compatibility, stability and Reidemeister moves. We show that virtual braids are in a bijective correspondence with abstract braids. Finally we demonstrate that for any abstract braid, its representative of minimal genus is unique up to compatibility and Reidemeister moves. The genus of such a representative is thus an invariant for virtual braids. We also give a complete proof of the fact that there is a bijective correspondence between virtually equivalent virtual braid diagrams and braid-Gauss diagrams.
\end{abstract}

\tableofcontents



\section{Introduction}

  A knot diagram is an oriented planar closed curve in general position (only transversal double points, called crossings) with a function that assigns to each crossing a sign, either positive or negative. Knot diagrams are identified up to Reidemeister moves and the equivalence classes are in bijective correspondence with knots in $\R ^3$. Some approaches to knot theory are by means of knot diagrams.

  There is a combinatorial way to describe an oriented planar closed curve in general position called the Gauss word. It was described by Gauss \cite[pp. 85]{Ca95} in his unpublished notebooks. The main idea of this approach is to consider the curve as an oriented graph where the vertices are the crossings of the curve and the edges are the oriented segments joining two crossings. 
  
  This idea has been retaken by O. Viro and M. Polyak in order to express knot diagrams in a combinatorial way. To do this, they introduced the notion of Gauss diagrams to compute Vassiliev's invariants \cite{PV94}. In fact  Mikhail N. Goussarov proved that all Vassiliev invariants can be calculated in this way. 
  
  The theory of virtual knots was introduced by L. Kauffman \cite{Ka97} as a generalization of classical knot theory. A virtual knot diagram is an oriented planar closed curve in general position with a function that assigns to each crossing a value that can be positive, negative or virtual. Virtual knot diagrams are identified up to Reidemeister, virtual and mixed moves. 
 
  Goussarov, Polyak and Viro \cite{GPV2000} showed that there is a bijection between virtually equivalent virtual knot diagrams and Gauss diagrams. Moreover they showed that any Vassiliev invariant of classical knots can be extended to virtual knots and calculated via Gauss diagrams formulas.

  Braids are a fundamental part of knot theory, as each link can be represented as the closure of a braid \cite{A23} (Alexander's theorem) and there is a complete characterization of the closure of braids given by Markov's theorem \cite{M35}. 
  
  L. Kauffman defined virtual braids and virtual string links and he also gave a virtual version of Alexander's theorem \cite{Ka2006}. Independendtly S. Kamada also proved a virtual version of Alexander's theorem and a full characterization of the closure of virtual braids, i.e. a virtual version of Markov's theorem \cite{K2007}. 
  
  Goussarov, Polyak and Viro defined Gauss diagrams for virtual string links. All though they stated that, up to virtual and mixed moves, each Gauss diagram defines a unique virtual string link diagram \cite{GPV2000,KP2010}, it is not clear that this statement is still true for virtual braids. 
  
  In Section 2 we describe the braid version of Gauss diagrams and then we prove that each braid-Gauss diagram defines, up to virtual and mixed moves, a unique virtual braid diagram. Then we introduce the $\Omega$ moves in braid-Gauss diagrams and we show that there is a bijection a bijection between the $\Omega$ equivalence classes of braid-Gauss diagrams and the virtual braids. Finally we recover the presentation given for the pure virtual braids in \cite{Ba2004}. 
 
  This result is quite technical and it has been considered as folklore in literature, even though it was missing a rigorous proof.  Gauss diagrams allow us to manage global information with much more liberty as they express the interaction among all strands along the time. On the other hand, on Gauss digrams we can obvious the virtual crossings, as the valuable information of virtual objects underlies on how the strands interacts with themselves through regular crossings, this information is expressed on the arrows of the Gauss diagrams. In particular they become our main tool to prove the results of Section 3 and in \cite{KP2010} are used to define and calculate Milnor invariants of virtual string links. 

  On the other hand, classical knot theory works with topological objects that can be studied with topological, analytic, algebraic and combinatorial tools. Virtual knots diagrams encodes the combinatorial information of a Gauss diagram, but these are not topological objects. A topological interpretation of these objects was done by N. Kamada, S. Kamada and J. Carter \cite{CKK2002, KK2000}. They defined abstract links as link diagrams on surfaces, identified up to stable equivalence and Reidemeister moves. They proved that abstract links are in bijective correspondence with virtual links.
  
   A representation of a virtual knot in a closed surface is called a {\it realization} of the virtual knot. The stable equivalence identifies different realizations, which means that a virtual knot may be realized in different surfaces. G. Kuperberg proved that any virtual link admits a realization in a surface of minimal genus and, up to diffeomorphism and Reidemeister moves, this realization is unique \cite{Ku2003}.
  


     In this paper we provide a topological interpretation of virtual braids inspired by \cite{KK2000, Ku2003}. In Section 3 we introduce the notion of abstract braid diagrams, that are braid diagrams in a surface with two distinguished boundary components and a real smooth function satisfying some conditions.    We also introduce the stable equivalence of abstract braid diagrams. The {\it abstract braids} are the abstract braid diagrams identified up to compatibility, stable equivalence and Reidemeister moves.  We prove that abstract braids are in bijective correspondence with virtual braids. 
  
     The notion of abstract braid diagram must not be confused with the definition given in \cite{KK2000}, even if the concept is quite similar. An abstract diagram in \cite{KK2000} is a pair $(S,D)$, with $S$ an oriented, compact surface and $D$ is a diagram in $S$ such that $D$ is a deformation retract of $S$. In any case, what we define as abstract braid diagram corresponds to the realization of an abstract diagram in \cite{KK2000}.

    In Section 4    (Theorem \ref{thm:3})  we prove that given an abstract braid, there is a unique abstract braid diagram (up to Reidemeister moves and compatibility) of minimal genus.

	These results states some questions for future work:
	\begin{enumerate}
	\item For any virtual braid diagram there exists a minimal thickened abstract braid representative. Can this representative induce a normal form on virtual braids? 
	\item We can see virtual braids as virtual string links, but in virtual string links we have more Reidemeister and virtual moves. Thus, given two virtual braids equivalent as virtual string link, are they equivalent as virtual braids?  i.e. Does virtual braids embeds in virtual string links? 
	\item Given a thickened abstract braid diagram $\bar{\beta}=(M_S,f,\beta)$, what is the relation between the fundamental group of $M_S\setminus \beta$ and the group of the virtual link \cite{BB2014,KK2000} obtained by the closure of $\bar{\beta}$?
	\end{enumerate}

\section{Virtual braids and Gauss diagrams}

We fix the next notation: set $n$ a natural number, the interval $[0,1]$ is denoted by $I$, and the $2$-cube is denoted by $\D = I \times I$.  The projections on the first and second coordinate from the $2$-cube to the interval, are denoted by $\pi_1 : \D \rightarrow I$ and $\pi_2: \D \rightarrow I$, respectively. A set of planar curves is said to be in {\it general position} if all its multiple points are transversal double points.

\subsection{Virtual braids.}

\begin{definition}\label{def:vbd}
 A {\it strand diagram} on $n$ strands is an $n$-tuple of curves, $\beta=(\beta_1,\dots,\beta_n)$, where $\beta_k : I \rightarrow \D$ for $k=1,\dots,n$, such that:
 \begin{enumerate}
  \item There exists $\sigma \in S_n$ such that, for $k=1,\dots,n$, we have $\beta_k(0) = a_k$ and $\beta_k(1) = b_{\sigma(k)}$, where $a_k = (0, \frac{k}{n+1})$ and $b_k = (1, \frac{k}{n+1})$.
  \item For $k=1,\dots,n$ and all $t\in I$, $(\pi_1 \circ \beta_k)(t) = t$.
  \item The set of curves in $\beta$ is in general position. 
 \end{enumerate}
 The curves $\beta_k$ are called {\it strands} and the transversal double points are called {\it crossings}. The set of crossings is denoted by $\mathcal{C}(\beta)$. 
 
 A {\it virtual braid diagram} on $n$ strands is a strand diagram on $n$ strands $\beta$ endowed with a function $\epsilon: \mathcal{C}(\beta) \rightarrow \{+1,-1,v\}$. The crossings are called {\it positive, negative} or {\it virtual} according to the value of the function $\epsilon$. The positive and negative crossings are called {\it regular crossings} and the set of regular crossings is denoted by $R(\beta)$. In the image of a regular neighbourhood (homeomorphic to a disc sending the center to the crossing) we replace the image of the involved strands as in Figure \ref{pnvcross}, according to the crossing type.  
\end{definition}

\begin{figure} [h]\centering
\centering
 \includegraphics[scale=.5]{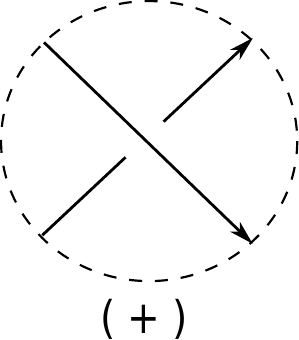}   \qquad \includegraphics[scale=.5]{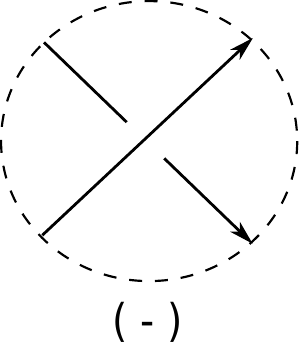} \qquad  \includegraphics[scale=.5]{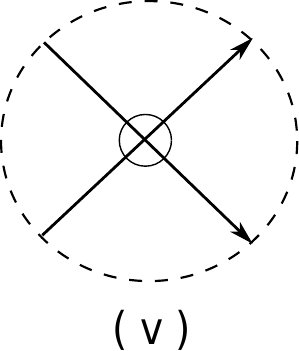}
	  \caption{Positive, negative and virtual crossings.}
	\label{pnvcross}
\end{figure}

Without loss of generality we draw the braid diagrams from left to right. We denote the set of virtual braid diagrams on $n$ strands by $VBD_n$.

\begin{definition}
 Given two virtual braid diagrams on $n$ strands, $\beta_1$ and $\beta_2$, and a neighbourhood $V\subset \D$, homeomorphic to a disc, such that:
 \begin{itemize}
 	\item Up to isotopy $\beta_1 \setminus V = \beta_2 \setminus V$.
 	\item Inside $V$, $\beta_1$ differs from $\beta_2$ by a diagram as either in Figure \ref{Rmoves}, or in Figure \ref{Vmoves}, or in Figure \ref{Mmoves}.
 \end{itemize} 
 Then we say that $\beta_2$ is obtained from $\beta_1$ by an $R2a$, $R2b$, $R3$, $V2$, $V3$, $M$ or $M'$ moves. 
 
 The moves $R2a$, $R2b$, $R3$ are called {\it Reidemeister moves}, the moves $V2$ and $V3$ are called {\it virtual moves}, and the moves $M$ and $M'$ are called {\it mixed moves}.

\begin{figure} [h] \centering
 \includegraphics[scale=0.5]{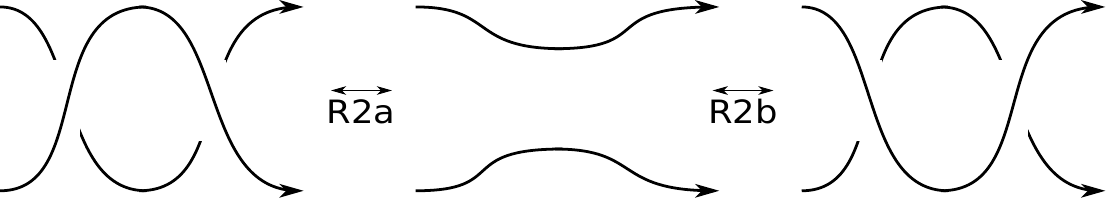} 
\quad 
 \includegraphics[scale=0.5]{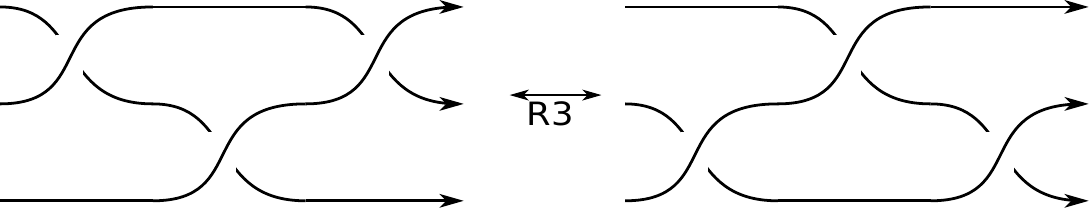} 
 \caption{Reidemester moves.}
 \label{Rmoves}
\end{figure}

\vspace{-8pt}
\begin{figure}  \centering
 \includegraphics[scale=0.5]{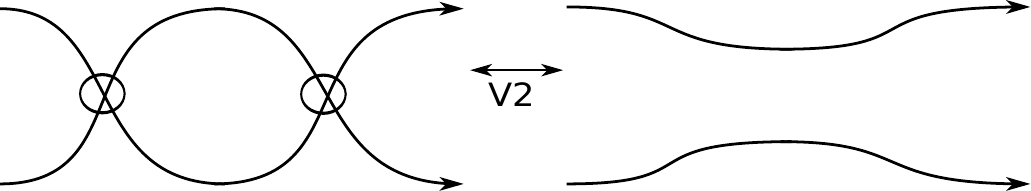}  
 \quad
 \includegraphics[scale=0.5]{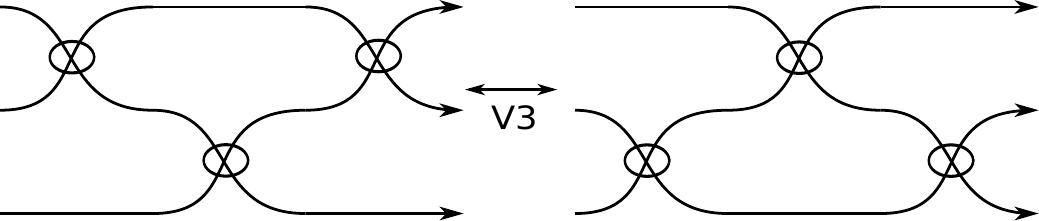}
 \caption{Virtual moves.}
 \label{Vmoves}
\end{figure}

\vspace{-8pt}
\begin{figure} \centering
 \includegraphics[scale=0.5]{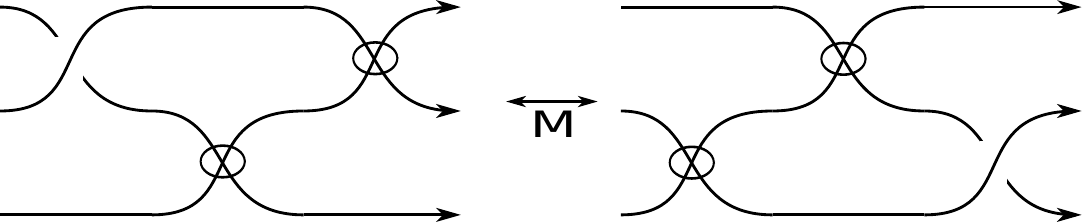} 
 \quad
 \includegraphics[scale=0.5]{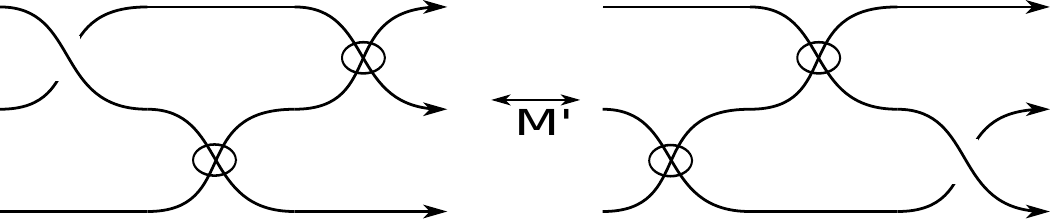}
 \caption{Mixed moves.}
 \label{Mmoves}
\end{figure}

Let $\beta$ and $\beta'$ be two virtual braid diagrams. Note that if $\beta$ can be obtained from $\beta'$ by a finite series of virtual, mixed or  Reidemeister moves, necessarily $\beta$ and $\beta '$ have the same number of strands. 

If $\beta'$ can be obtained from $\beta$ by isotopy and a finite number of virtual, Reidemeister or mixed moves, $\beta$ and $\beta'$ are {\it virtually Reidemeister equivalent}.  We denote this by $\beta\sim \beta'$.  These equivalence classes are called {\it virtual braids on $n$ strands}. We denote by $VB_n = VBD_n/\sim$ the set of virtual braids on $n$ strands.

If $\beta'$ can be obtained from $\beta$ by isotopy and a finite number of virtual or mixed moves, $\beta$ and $\beta'$ are {\it virtually equivalent}.  We denote this by $\beta\sim_{vm} \beta'$.

If $\beta'$ can be obtained from $\beta$ by isotopy and a finite number of Reidemeister moves, $\beta$ and $\beta'$ are {\it Reidemeister equivalent}.  We denote this by $\beta\sim_{R} \beta'$.

\end{definition}

\begin{remark}\label{rmk:VBn}
 Define the product of two virtual braids diagrams as the concatenation of the diagrams and an isotopy in the obtained diagram, to fix it in $\D$. With this operation the set of virtual braid diagrams has the structure of a monoid. It is not hard to see that it factorizes in a group when we consider the virtual Reidemeister equivalence classes. Thus, the set of virtual braids has the structure of a group with the product defined as the concatenation of virtual braids. The virtual braid group on $n$ strands has the following presentation:
\begin{itemize}
	\item Generators: $\sigma_1,\dots,\sigma_{n-1},\tau_1,\dots,\tau_{n-1}$.
	\item Relations: \[\begin{split}
					\sigma_i\sigma_{i+1}\sigma_i = \sigma_{i+1}\sigma_i\sigma_{i+1} \quad & \quad \text{for } 1\leq i \leq n-2 \\
					\sigma_i\sigma_j = \sigma_j\sigma_i 					\quad	& \quad\text{ if } |i-j|\geq 2 \\
					\tau_i \tau_{i+1}\tau_i = \tau_{i+1}\tau_i\tau_{i+1} 	\quad	& \quad\text{for } 1\leq i \leq n-2\\
					\tau_i\tau_j = \tau_j\tau_i 							\quad	& \quad\text{ if } |i-j|\geq 2 \\	
					\sigma_i \tau_{i+1}\tau_i = \tau_{i+1}\tau_i\sigma_{i+1} \quad 	& \quad\text{for } 1\leq i \leq n-2\\
					\tau_i\sigma_j = \sigma_j\tau_i 						\quad	& \quad\text{ if } |i-j|\geq 2 \\				
					\tau_i^2 = 1 \quad & \quad \text{for } 1\leq i \leq n-1 \\
				 \end{split}\]
\end{itemize}
\end{remark}

\begin{remark}
	The mixed moves can be replaced by the moves showed in Figure \ref{pmoves}.
\end{remark}

\begin{figure} [h]\centering
     \includegraphics[scale=0.5]{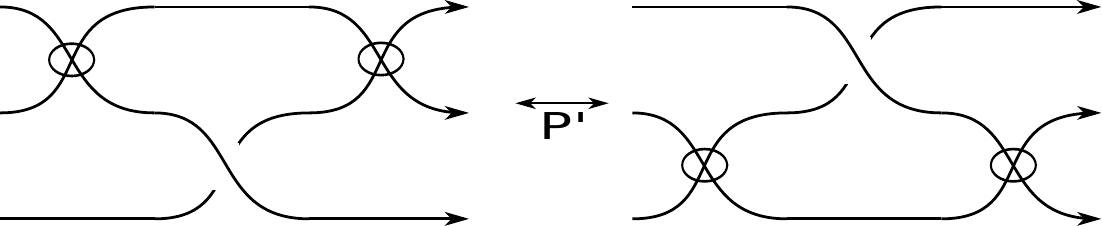} 
  \quad
   \includegraphics[scale=0.5]{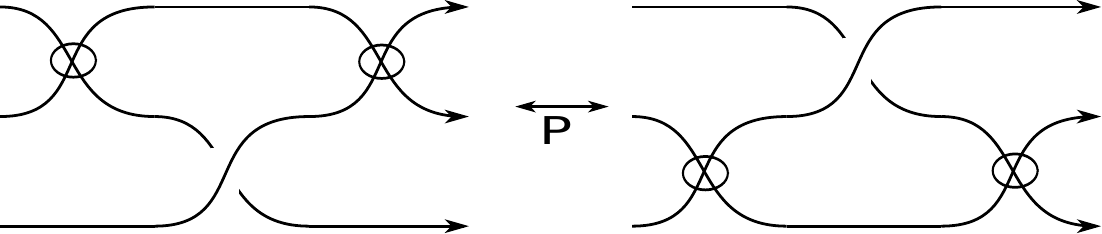}  
 \caption{Equivalent mixed moves.}
 \label{pmoves}
\end{figure}
 	
\subsection{Braid Gauss diagrams.}

\begin{definition}
  A {\it Gauss diagram} on $n$ strands $G$ is  an ordered collection of $n$ oriented intervals $\sqcup_{i=1}^{n}I_i$, together with a finite number of arrows and a permutation $\sigma \in S_n$ such that:
  \begin{itemize}
  	\item Each arrow connects by its ends two points in the interior of the intervals (possibly the same interval).
  	\item Each arrow is labelled with a sign $\pm 1$. 
  	\item The end point of the $i$-th interval is labelled with $\sigma(i)$. 
  \end{itemize}  
  Gauss diagrams are considered up to orientation preserving homeomorphism of the underlying intervals.
\end{definition}

\begin{figure} [h]\centering
	\includegraphics[scale=0.8]{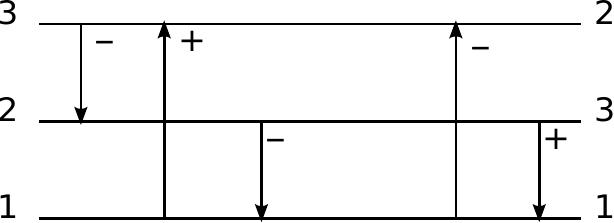}
	\quad
	\includegraphics[scale=0.8]{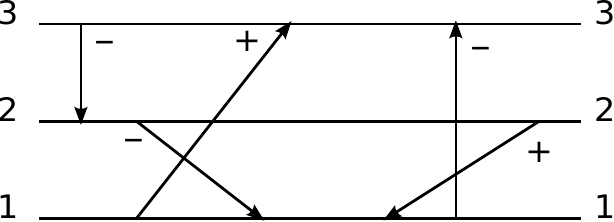}
	
	\caption{Gauss diagrams}
\end{figure}

\begin{definition}
  Let $\beta$ be a virtual braid diagram on $n$ strands. The {\it Gauss diagram} of $\beta$, $G(\beta)$, is a Gauss diagram on $n$ strands given by:
  \begin{itemize}
  	\item Each strand of $G(\beta)$ is associated to the corresponding strand of $\beta$. 
  	\item The endpoints of the arrows of $G(\beta)$ correspond to the preimages of the regular crossings of $\beta$. 
  	\item Arrows are pointing from the over-passing string to the under-passing string. 
  	\item The signs of the arrows are given by the signs of the crossings (their local writhe). 
  	\item The permutation of $G(\beta)$ correspond to the permutation associated to $\beta$.
  \end{itemize}

\end{definition}

\begin{figure} [h]\centering	
	\includegraphics[scale=0.7]{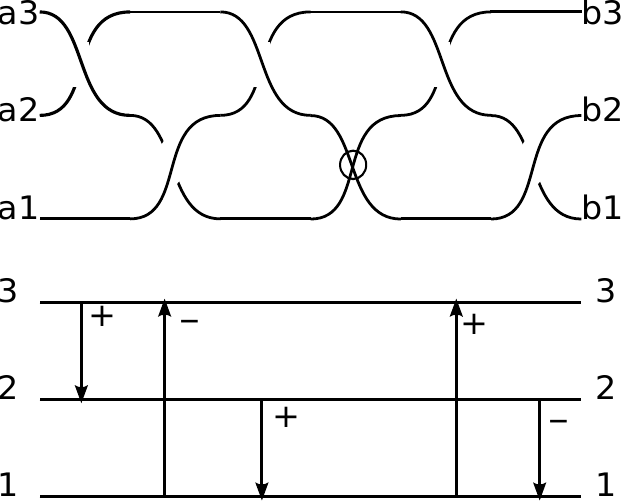}
	\quad
	\includegraphics[scale=0.7]{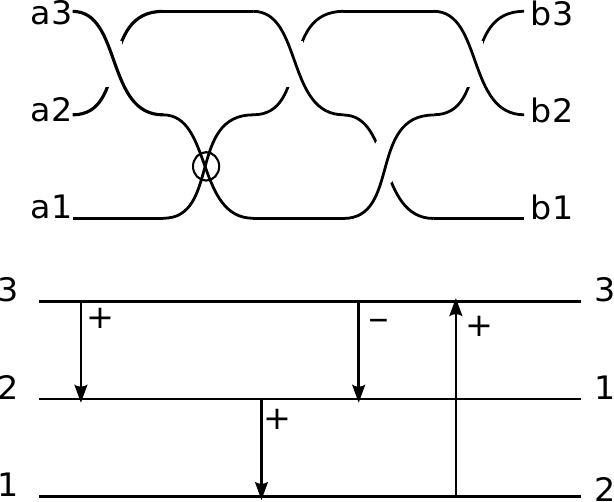}
	\caption{Gauss diagrams of virtual braid diagrams}
	\label{Fig:GdVB}
\end{figure}

 \begin{remark}\label{rmk:GDorto}
	The arrows of the Gauss diagram of any virtual braid diagram are pairwise disjoint and each arrow connects two different intervals. Furthermore we can draw them perpendicular to the underlying intervals, i.e. we can parametrize each interval $I_n$ with respect to the standard interval $I=[0,1]$, in such a way that the beginning and ending points of each arrow correspond to the same $t\in I$ and such that different arrows correspond to different $t$'s in $I$, see Figure \ref{Fig:GdVB}. 
\end{remark}

\begin{definition}
	Gauss diagrams satisfying the conditions of Remark \ref{rmk:GDorto} are called {\it braid Gauss diagrams}. The set of braid Gauss diagrams on $n$ strands is denoted by $bGD_n$.
\end{definition}

\begin{definition}\label{rmk:GaussOrder}
	Given a braid-Gauss diagram, $G$, we can associate a total order to the set of arrows in $G$, given by the order in which the arrows appear in the interval $I$, i.e. let $a$ and $b$ be two arrows in $G$, such that $a$ appears first, then $a>b$. This order is not defined in the equivalence class of the Gauss diagram, as it may change with orientation preserving homeomorphisms of the underlying intervals. 
	
	We denote by $P(G)$ the partial order obtained as the intersection of the total orders associated to $G$.  Given a virtual braid diagram $\beta$, let $G(\beta)$ be its Gauss diagram. Then $P(G(\beta))$ defines a partial order in the set of regular crossings,$R(\beta)$. We denote it by $P(\beta)$.
\end{definition}

\begin{theorem}\label{thm:1} 
    \begin{enumerate}
		\item Let $g$ be a braid-Gauss diagram on $n$ strands. Then there exists $\beta\in VBD_n$ such that $G(\beta)=g$.
		\item Let $\beta_1$ and $\beta_2$ be two virtual braids on $n$ strands. Then $G(\beta_1)=G(\beta_2)$ if and only if $\beta_1\sim_{vm} \beta_2$. 
	\end{enumerate}
\end{theorem}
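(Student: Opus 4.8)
The plan is to treat the two parts separately, with the \emph{detour move} as the central technical device.

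For part (1), realization, I would use Remark \ref{rmk:GDorto} to read the arrows of the braid-Gauss diagram $g$ in increasing order of their perpendicular positions $t_1<\dots<t_m$ in $I$, so that they are totally ordered in time. I then build $\beta$ by sweeping in time while maintaining the vertical order of the $n$ strands, starting with strand $i$ at height $i$. For each arrow in turn, say one joining intervals $a$ and $b$ with a prescribed sign and direction, I insert a block of virtual crossings to bring the strands labelled $a$ and $b$ to adjacent heights (choosing which of the two ends up on top), then place a single regular crossing realizing that arrow's sign and over/under data, updating the vertical order accordingly. After all arrows are processed the strands sit in some permutation $\rho$, and I append a final block of virtual crossings realizing the permutation that sends $\rho$ to $\sigma$. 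Since adjacent transpositions generate $S_n$ and virtual crossings realize them freely, every step is possible, and by construction $G(\beta)=g$.

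For part (2), the easy implication ($\beta_1\sim_{vm}\beta_2\Rightarrow G(\beta_1)=G(\beta_2)$) I would check move-by-move: the virtual moves $V2,V3$ involve no regular crossing, and the mixed moves $M,M'$ slide a regular crossing past a virtual one without altering its sign, its direction, the pair of strands it joins, or the linear order of the regular crossings along any strand; each of these moves also fixes the underlying permutation. Hence every allowed move leaves the Gauss diagram unchanged.

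The hard implication is the converse, and its heart is a \emph{Detour Lemma}: any arc of a strand all of whose crossings are virtual may be replaced, using only $V2,V3,M,M'$, by any other generic arc with the same endpoints, crossing the remaining strands virtually. I would prove this first, decomposing a general detour into elementary finger-moves, each directly realized by the listed moves. Granting it, I fix the canonical realization $\beta_g$ produced in part (1). Given an arbitrary $\beta$ with $G(\beta)=g$, the equality of Gauss diagrams puts the regular crossings of $\beta$ in canonical bijection with the arrows of $g$, occurring in the same order along each strand; since everything between consecutive regular crossings is pure virtual routing, the Detour Lemma lets me pull these segments into the standard perpendicular configuration of $\beta_g$, one crossing at a time. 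Thus $\beta$ becomes identical to $\beta_g$ up to isotopy, so $\beta\sim_{vm}\beta_g$, and applying this to both $\beta_1,\beta_2$ yields $\beta_1\sim_{vm}\beta_g\sim_{vm}\beta_2$.

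The main obstacle is the Detour Lemma together with its careful application: I must guarantee that the rerouting genuinely uses only virtual and mixed moves and never an honest Reidemeister move. This forces me to keep every regular crossing rigid throughout the normalization --- fixing its time-position as well as its over/under and sign data --- and to move only purely virtual strand segments. Ordering the regular crossings by the time order from Remark \ref{rmk:GDorto} and normalizing them successively, while tracking the vertical order of the strands, is what keeps this bookkeeping under control.
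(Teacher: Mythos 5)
Your realization construction for part (1) and your move-by-move check for the easy half of part (2) are fine and essentially match the paper (the paper's Lemma \ref{lemma:0} places the regular crossings on a horizontal line and routes the strands through them, declaring every extra intersection virtual; your time-sweep construction is an equivalent variant). Your identification of a Detour Lemma as the heart of the converse is also correct: the paper's Lemmas \ref{lemma:3} and \ref{lemma:4} and Corollary \ref{cor:4} prove exactly that two diagrams differing by a primitive arc with the same endpoints are virtually equivalent, by induction on the number of crossings inside the bigon bounded by the two arcs.

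However, there is a genuine gap in your normalization step. The Gauss diagram determines only the order of the regular crossings \emph{along each strand}, i.e.\ the partial order $P(\beta)$ of Definition \ref{rmk:GaussOrder}; it does not determine their global time order $D(\beta)$. Two diagrams $\beta$ and $\beta_g$ with $G(\beta)=G(\beta_g)$ may have two $P$-incomparable regular crossings occurring in opposite time orders, and since you insist on keeping every regular crossing rigid (fixed time-position, moving only purely virtual segments), your Detour Lemma can never match such a $\beta$ to $\beta_g$: rerouting a virtual arc with fixed endpoints cannot permute the times of the regular crossings that bound it. You need an additional statement --- the paper's Corollary \ref{cor:3} and Lemma \ref{lemma:2} --- asserting that any total order on $R(\beta)$ compatible with $P(\beta)$ can be realized by a virtually equivalent diagram. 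Proving this is not free: it amounts to dragging an entire regular crossing, treated as a rigid tangle, past other strands and past other regular crossings, which is the content of the triangle-clearing Lemma \ref{lemma:1} and Corollary \ref{cor:2}. Only after both diagrams have been brought to a common time order, with their regular crossings made to coincide by isotopy, do they differ purely by joining arcs to which your Detour Lemma applies.
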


From now on we fix $n\in \N$ the number of strands on the braid-Gauss diagrams, and we say braid-Gauss diagram instead of braid-Gauss diagram on $n$ strands. We split the proof of this theorem into some lemmas. 

\begin{lemma}\label{lemma:0}
	Let $g$ be a braid-Gauss diagram. Then there exists $\beta\in VBD_n$ such that $G(\beta)=g$.
\end{lemma}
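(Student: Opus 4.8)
The plan is to build $\beta$ by reading the arrows of $g$ in time order and realizing each of them as a single regular crossing, using virtual crossings freely to move strands into the positions required. By Remark \ref{rmk:GDorto} the arrows of $g$ are pairwise disjoint, each joins two \emph{different} intervals, and they can be taken perpendicular to the intervals, hence occurring at distinct parameter values. Let $\alpha_1,\dots,\alpha_m$ be the arrows listed according to the total order of Definition \ref{rmk:GaussOrder}, occurring at increasing times $0<t_1<\dots<t_m<1$, where $\alpha_j$ joins $I_{a_j}$ and $I_{b_j}$, points from $I_{a_j}$ to $I_{b_j}$, and carries the sign $\epsilon_j\in\{+1,-1\}$.

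Throughout the construction I maintain the vertical arrangement of the $n$ strands, that is, a bijection between the heights $\tfrac{1}{n+1},\dots,\tfrac{n}{n+1}$ and the strand labels $1,\dots,n$; initially the strand labelled $k$ sits at height $\tfrac{k}{n+1}$. The key observation is that virtual crossings are invisible to the Gauss diagram and that, since any permutation of $\{1,\dots,n\}$ is a product of adjacent transpositions, a block of virtual crossings can be inserted over a short time interval to pass from any arrangement to any other without creating a single regular crossing. I will use this freedom twice: to prepare each regular crossing, and to correct the final arrangement.

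For each $j$ in turn I first insert, on a short interval ending just before $t_j$, a block of virtual crossings bringing the strands labelled $a_j$ and $b_j$ to adjacent heights, in the vertical order dictated by the pair (over/under, $\epsilon_j$). Here I use that for two strands both monotone increasing in $\pi_1$ that swap at the crossing, the over/under datum together with which strand is the ascending one determines the local writhe; hence, with the over-strand fixed to be $a_j$, the two choices of whether $a_j$ lies below or above $b_j$ just before the crossing realize the two possible signs, and I select the one giving $\epsilon_j$. At time $t_j$ I then place a single regular crossing between these adjacent strands with $a_j$ passing over $b_j$, assign it the sign $\epsilon_j$, and update the arrangement by the corresponding transposition. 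After the last arrow I append a final block of virtual crossings realizing whatever permutation makes the strand labelled $k$ exit at height $\tfrac{\sigma(k)}{n+1}$ for every $k$.

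It remains to check that the resulting $\beta$ is a virtual braid diagram with $G(\beta)=g$. Each strand is monotone in $\pi_1$ by construction, and after a small perturbation the whole family is in general position, so $\beta\in VBD_n$. Its regular crossings are exactly the $m$ crossings placed at $t_1<\dots<t_m$; reading them off produces arrows joining $I_{a_j}$ to $I_{b_j}$ with the prescribed direction and sign, in the prescribed order, so the arrow data of $G(\beta)$ agrees with that of $g$ up to orientation-preserving homeomorphism of the intervals. Finally the underlying permutation of $\beta$, which the last block was chosen to fix, equals $\sigma$, matching the interval labels of $g$; hence $G(\beta)=g$. I expect the only genuinely delicate point to be the simultaneous realization of the prescribed over/under datum and the prescribed sign at each crossing; everything else is the routine bookkeeping of tracking the arrangement and inserting virtual crossings.
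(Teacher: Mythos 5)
Your proposal is correct and follows essentially the same strategy as the paper's proof: realize each arrow of $g$ as a single regular crossing placed at its prescribed time, with the over-strand and relative vertical position chosen to produce the prescribed sign, and use virtual crossings freely for all routing, including the final adjustment of the permutation. The only difference is cosmetic bookkeeping --- the paper pre-places the crossings in small discs along the midline and joins the labelled boundary points of consecutive discs by monotone arcs (declaring stray intersections virtual), whereas you track the vertical arrangement and insert explicit blocks of adjacent virtual transpositions; both hinge on the same observations about monotonicity, the order of endpoints along each interval, and the sign/over-under dichotomy.
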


\begin{proof}
	Let $g$ be a braid-Gauss diagram and $A=\{c_1,\dots,c_k\}$ be the set of arrows of $g$. Set a parametrization of the intervals as described in Remark \ref{rmk:GDorto}. This induces an order in $A$ given by  $c_i > c_j$ if $p_i < p_j$, where $p_i\in I$ is the corresponding endpoint of $c_i$. Suppose that  $c_i > c_j$ if $i<j$.
	
	Recall the notation of Definition \ref{def:vbd}. For $j=1,\dots,k$ let $d_j = (\frac{j}{k+1}, \frac{1}{2})$ and consider the disc $D_j$ with radius $r=\frac{1}{5(k+1)}$ centered in $d_j$. Draw a crossing inside $D_j$ according to the sign of $c_j$, and label the intersection of the crossing components with the boundary of $D_j$ as in Figure \ref{labelCross}.

\begin{figure}[h]\centering
	\includegraphics[scale=0.6]{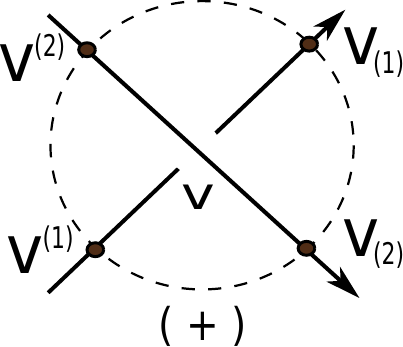}
	\qquad
	\includegraphics[scale=0.6]{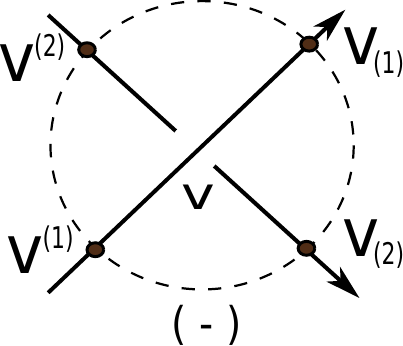}
	\caption{Labelled neighbourhoods of regular crossings.}
	\label{labelCross}
\end{figure}

	Drawing the strands: let $\sigma\in S_n$ be the permutation associated to $g$. Fix $i\in \{1,\dots,n\}$ and let $A_i=\{c_{i_1},\dots,c_{i_{m}}\}$ be the arrows starting or ending in the $i$-th interval.
	
%
%
	For $s=0,\dots,m$ define $o_s$ and $t_{s+1}$ as follows:
	\begin{enumerate}
		\item $o_0 = a_i$ and $t_{m+1}= b_{\sigma(i)}$.
		\item For $l=1,\dots,m$, $o_l = (d_{i_l})^{(v)}$ and $t_l = (d_{i_l})_{(v)}$ where:
			\begin{enumerate}
				\item If $c_{i_l}$ is a positive arrow starting in the $i$-th interval or a negative arrow ending in the $i$-th interval then $v=2$; 
				\item If $c_{i_l}$ is a negative arrow starting in the $i$-th interval or a positive arrow ending in the $i$-th interval then $v=1$. 
			\end{enumerate}
	\end{enumerate}	 
	
    For each $s\in \{0,\dots,m\}$, draw a curve joining $o_s$ to $t_{s+1}$ such that it is strictly increasing on the first component and disjoint from the discs $D_j$ for all $j\in \{1,\dots,k\}$ except possibly on the points $o_s$ and $t_{s+1}$ defined above. In this way we have drawn a curve joining $a_i$ with $b_\sigma(i)$ passing through the crossings $c_{i_1}, \dots, c_{j_m}$.
    
    For each $i\in \{1,\dots,k\}$ we can draw a curve as described before, so that they are in general position. Consider the double points outside the discs $D_j$ as virtual crossings. In this way we have constructed a virtual braid diagram such that its Gauss diagram coincides with $g$. 
\end{proof}

\begin{lemma}\label{lemma:01}
	Let $\beta_1$ and $\beta_2$ be two virtual braid diagrams on $n$ strands such that they are virtually equivalent. Then $G(\beta_1) = G(\beta_2)$. 
\end{lemma}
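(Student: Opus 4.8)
The plan is to show that the Gauss diagram $G(\beta)$ is left unchanged by each of the generating equivalences defining $\sim_{vm}$, namely isotopy and a single application of a $V2$, $V3$, $M$ or $M'$ move. Since virtual equivalence is generated by these, invariance under each generator yields $G(\beta_1)=G(\beta_2)$ whenever $\beta_1\sim_{vm}\beta_2$. First I would record exactly what data a braid-Gauss diagram carries: the permutation $\sigma$, and, for each regular crossing, the unordered pair of strands it joins, the direction of the corresponding arrow (from the over-strand to the under-strand), and its sign, together with the linear order of the arrow endpoints along each interval (the normalization of Remark \ref{rmk:GDorto}, taken up to orientation preserving homeomorphism of the intervals). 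It then suffices to verify that every generator preserves all of this data.

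The crucial preliminary observation is that the order of the regular crossings along any fixed strand is an isotopy invariant. Indeed, by condition (2) of Definition \ref{def:vbd} each strand is a graph over the time coordinate $\pi_1$, so two distinct crossings lying on a common strand always occur at two distinct values of $t\in I$; during an isotopy through virtual braid diagrams they vary continuously and can never collide, since a collision would force the two crossings to the same point of the strand. Hence their relative order along that strand is preserved. Combined with the fact that isotopy fixes the endpoints $a_k,b_{\sigma(k)}$ (and therefore the permutation $\sigma$) and preserves the sign and over/under type of every regular crossing, this shows that $G(\beta)$ is invariant under isotopy. Note that the total time-order of two crossings carried by four \emph{distinct} strands may well change under isotopy, but such a change is invisible to $G(\beta)$, which records only the per-strand orders; this is precisely why the Gauss diagram is taken up to homeomorphism of the intervals.

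Next I would dispose of the three types of moves. For $V2$ and $V3$ the disc $V$ meets only virtual crossings, so no regular crossing is created, destroyed, or reordered relative to the others, and the permutation is unaffected; thus all of the data above is untouched and $G(\beta)$ does not change. For the mixed moves $M$ and $M'$ the disc $V$ contains exactly one regular crossing, and the move merely detours a virtually-crossing strand from one side of it to the other: the regular crossing keeps the same pair of strands, the same arrow direction, and the same sign, and since $V$ is a disc carrying a single regular-crossing endpoint on each of the strands involved, the position of that crossing relative to every other regular crossing is preserved, as is the permutation. Hence $G(\beta)$ is preserved by $M$ and $M'$ as well, and the lemma follows. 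The step requiring the most care is this last one: one must check directly from Figure \ref{Mmoves} (equivalently Figure \ref{pmoves}) that the mixed moves alter neither the sign nor the over/under type of the regular crossing, and do not reorder it with respect to the remaining arrows.
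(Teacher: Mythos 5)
Your proposal is correct and follows essentially the same route as the paper's proof: reduce to checking invariance of $G(\beta)$ under each generating move, observe that $V2$ and $V3$ involve only virtual crossings (which the Gauss diagram does not record), and verify directly that $M$ and $M'$ preserve the single regular crossing's strands, sign, direction, and position relative to the other arrows. The only difference is that you spell out the isotopy-invariance of the per-strand order of regular crossings, which the paper leaves implicit; this is a welcome but not essentially different addition.
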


\begin{proof}
	In order to see this we only need to verify that the $V2$, $V3$, $M$ and $M'$ moves do not change the braid-Gauss diagram of a virtual braid diagram. In the cases of the $V2$ and $V3$ moves they involve only virtual crossings, which are not represented in the Gauss diagram, so they do not change the Gauss diagram. In the case of the $M$ and $M'$ moves, the Gauss diagrams of the equivalent virtual braid diagrams are equal (Figure \ref{GcM}), thus this type of move neither changes the Gauss diagram of the virtual braid diagram. 
\end{proof}

\begin{figure} [h]\centering
	
	\includegraphics[scale=0.8]{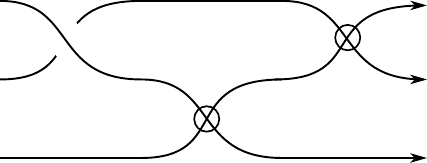} \quad
	\includegraphics[scale=0.8]{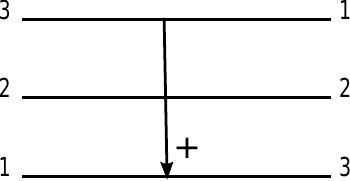} \quad
	\includegraphics[scale=0.8]{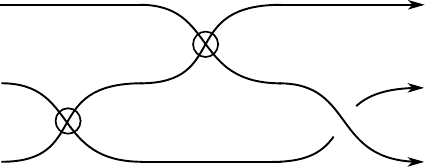}
	\caption{Gauss code of the mixed move.}
	\label{GcM}
\end{figure}

\begin{definition}\label{rmk:ordCross}
  Given $\beta \in VBD_n$ we can deform $\beta$ by an isotopy, in such a way that for $c_i, c_j\in \mathcal{C}(\beta)$  with, $i\neq j$, we have that $\pi_1(c_i) \neq \pi_1(c_j)$, in this case we say that $\beta$ is in \textit{general position}. If $\beta \in VBD_n$ is in general position, let $\mathcal{D}(\beta)$ be the total order associated to  $\mathcal{C}(\beta)$,  given by $c_i> c_j$ if $\pi_1(c_i) < \pi_1(c_j)$. Denote by $D(\beta)$ the total order of the set of regular crossings, $R(\beta)$, induced by $\mathcal{D}(\beta)$.
  
\end{definition}

\begin{definition}

A {\it primitive arc} of $\beta$ is a segment of a strand of $\beta$ which does not go through any regular crossing (but it may go through virtual ones). 

Let $\beta\in VBD_n$.  For $v\in R(\beta)$, set a disc $D_v$ centered in $v$, with a radius small enough so that its intersection with $\beta$ consists exactly in two transversal arcs as in Figure \ref{labelCross}. We denote by $v^{(1)}$ and by $v^{(2)}$ the bottom and upper left intersections of $\beta$ with $\partial D_v$, and by $v_{(2)}$ and by $v_{(1)}$ the bottom and upper right intersections of $\beta$ with $\partial D_v$ as in Figure \ref{labelCross}. 

 Let $d$ be a point in the diagram $\beta$, if $d\in \{b_1,\dots,b_n\}$ or $d\in \{c^{(1)}, c^{(2)}\}$ for some $c\in R(\beta)$ we denote $d$ by $d^*$. Similarly if $d\in \{a_1,\dots,a_n\}$ or $d\in \{c_{(1)}, c_{(2)}\}$ for some $c\in R(\beta)$ we denote $d$ by $d_*$.  A {\it joining arc} is a primitive arc $\alpha$ such that there exist $a_*$ and $b^*$ with $\alpha(0)=a_*$ and $\alpha(1)=b^*$.

\begin{figure}[h]\centering

	\includegraphics[scale=0.8]{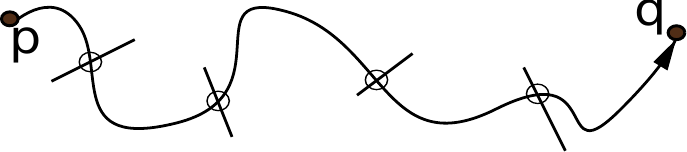} \qquad
	\includegraphics[scale=0.8]{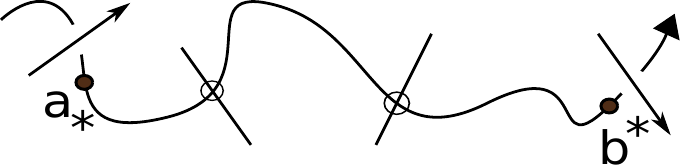} 
	\caption{primitive and joining arcs.} 
\end{figure} 
\end{definition}

As each arc is a segment of a strand $\beta_k: [0,1] \rightarrow \D $  we can parametrize it with respect to the projection on the first coordinate, i.e. there exists a continuous bijective map $\theta : [t_0,t_f] \rightarrow [0,1]$ with $0<t_0<t_f<1$ such that $\pi_1 (\alpha(\theta (t))) = t$ . Without loss of generality we suppose from now on that the arcs are parametrized by the projection on the first coordinate. 

\begin{lemma}\label{lemma:1}
	Let $\beta$ be a virtual braid diagram and let $\alpha_1, \alpha_2$ be two primitive arcs of $\beta$ such that: 
	\begin{enumerate}
		\item The arcs $\alpha_1$ and $\alpha_2$ start at the same time, $t_0$, and end at the same time, $t_f$.
		\item The arcs $\alpha_1$ and $\alpha_2$ start at the same point (a crossing which may be either virtual or regular), i.e. $\alpha_1(t_0)=\alpha_2(t_0)$.
		\item The arcs $\alpha_1$ and $\alpha_2$ do not intersect, except at the extremes, i.e.  $\alpha_1|_{(t_0,t_f)} \cap \alpha_2|_{(t_0,t_f)}  = \emptyset$.
	\end{enumerate}
	Then, there exists a virtual braid diagram $\beta'$ virtually equivalent to $\beta$ such that:
	\begin{enumerate}
		\item If $\beta_1$ and $\beta_2$ are the strands corresponding to $\alpha_1$ and $\alpha_2$ respectively, then up to isotopy they remain unchanged in $\beta'$, and in their restriction to  $(0,t_0)\times I$ we add only virtual crossings. 
		\item The diagrams $\beta$ and $\beta'$ coincide for $t\geq t_f$, i.e. $\beta|_{t\geq t_f} = \beta'|_{t\geq t_f}$.
		\item In $(t_0,t_f)\times I$ there are only virtual crossings with $\alpha_2$.
		\item If $\alpha_1(t_f) = \alpha_2(t_f)$, we can choose $\beta'$ such that there is no crossing in $(t_0,t_f)\times I$.
	\end{enumerate}
\end{lemma}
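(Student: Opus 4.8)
The plan is to realize the passage from $\beta$ to $\beta'$ as a single controlled isotopy of the arc $\alpha_1$ — a downward sweep that pushes $\alpha_1$ onto $\alpha_2$ — and to read off each elementary event of this sweep as one of the allowed moves $V2$, $V3$, $M$, $M'$. The decisive point, which I would isolate at the very start, is that both $\alpha_1$ and $\alpha_2$ are \emph{primitive} arcs: by definition every crossing lying on $\alpha_1$ or on $\alpha_2$ inside the strip is virtual. This is exactly what prevents any Reidemeister move from being needed, so that the resulting $\beta'$ is virtually equivalent to $\beta$; by Lemma \ref{lemma:01} it then automatically has the same braid--Gauss diagram.

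First I would put $\beta$ in general position and fix the vertical order of the two arcs. Since $\alpha_1$ and $\alpha_2$ are graphs over $[t_0,t_f]$, start at the common point $P=\alpha_1(t_0)=\alpha_2(t_0)$, and are disjoint on $(t_0,t_f)$, one of them lies strictly above the other throughout the open interval; after relabelling assume $\alpha_1(t)>\alpha_2(t)$ in the second coordinate for $t\in(t_0,t_f)$. They bound a wedge-shaped region $W$ whose left vertex is $P$, and every other strand meets $\overline{W}$ only by crossing $\alpha_1$ or $\alpha_2$, all such crossings being virtual.

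Next I would define the sweep as an isotopy $\{\alpha_1^u\}_{u\in[0,1]}$ interpolating $\alpha_1^0=\alpha_1$ to a curve $\alpha_1^1$ running immediately above $\alpha_2$ (pinned at $P$, and, when $\alpha_1(t_f)\neq\alpha_2(t_f)$, peeling up to $\alpha_1(t_f)$ on a final small interval $[t_f-\delta,t_f]$ so as to leave $\beta|_{t\ge t_f}$ untouched, as required by (2)). After a generic perturbation the sweep meets the rest of $\beta$ transversally, so that the finitely many moments at which the combinatorics change are of exactly three kinds, each an allowed move: (i) $\alpha_1^u$ passes a tangency with a single strand $s$, creating or deleting a pair of virtual crossings — a $V2$ move, and here the primitivity of $\alpha_1$ is precisely what makes this $V2$ and never $R2$; (ii) $\alpha_1^u$ passes a virtual crossing between two strands $s,s'$, yielding a $V3$ move on the three virtual crossings $\alpha_1\!\times\! s$, $\alpha_1\!\times\! s'$, $s\!\times\! s'$; (iii) $\alpha_1^u$ passes a regular crossing between $s$ and $s'$, which is exactly a mixed move $M$ or $M'$, since the two crossings of $\alpha_1$ are virtual while the crossing passed is regular. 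Concatenating these moves carries $\beta$ to $\beta'$, in which nothing lies strictly between $\alpha_1$ and $\alpha_2$ and every crossing of $\alpha_1$ is a virtual one paired with the corresponding crossing of $\alpha_2$ — this is conclusion (3). For (4), when $\alpha_1(t_f)=\alpha_2(t_f)$ the arcs bound an honest bigon, the final peeling is unnecessary, and the same sweep empties the bigon completely, leaving no crossing between the arcs.

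Conclusion (1) I would obtain essentially for free: the sweep is supported on $(t_0,t_f)$ and pinned at $P$, so $\beta_1,\beta_2$ are unchanged as strands up to isotopy, and any adjustment needed to put their portions before $t_0$ into general position only introduces virtual crossings; (2) holds by construction. The main obstacle is the transversality bookkeeping of the sweep: one must guarantee that the homotopy can be chosen generic relative to the entire diagram so that only the three codimension-one events occur (no simultaneous events, no tangency with $\alpha_2$, no recrossing of $\alpha_2$), and one must verify case by case — according to which strand is over or under and to the orientations — that event (iii) always matches one of the two mixed moves $M,M'$ and never an illegal configuration. Once this local analysis is in place, the global statement follows by ordering the events along $u$ and applying the corresponding move one at a time.
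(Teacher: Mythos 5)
Your sweep correctly handles the interior of the wedge $W$: pushing the all-virtual arc $\alpha_1$ across virtual crossings, regular crossings and tangencies is indeed realized by $V3$, $M/M'$ and $V2$ moves, and this matches the local move analysis in the paper. But the diagram you end up with is not the $\beta'$ the lemma asks for, and the discrepancy is not cosmetic. Conclusion (3) requires that in the whole strip $(t_0,t_f)\times I$ the \emph{only} crossings be virtual crossings on $\alpha_2$; in particular $\alpha_1$ must carry \emph{no} crossings there. After your sweep, any strand that passes from one side of $\alpha_1\cup\alpha_2$ to the other (entering the wedge through one arc and leaving through the other, or through the right edge $t=t_f$) still crosses $\alpha_1$ inside the strip, no matter how close $\alpha_1^1$ runs to $\alpha_2$; your own description of the final state ("every crossing of $\alpha_1$ is a virtual one paired with the corresponding crossing of $\alpha_2$") is precisely what (3) forbids. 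The missing ingredient is the move \emph{across the common initial crossing} $P=\alpha_1(t_0)=\alpha_2(t_0)$: in the paper's Cases 3 and 4 a crossing sitting on $\alpha_1$ is pushed past $P$ (directly, or after a preparatory $V2$ followed by $V3$/$M$/$M'$), which either transfers it onto $\alpha_2$ or expels it into the region $t<t_0$. This is also the only mechanism that produces the "added virtual crossings in $(0,t_0)\times I$" promised by conclusion (1); your construction never creates any, which is a symptom that the required rerouting has not happened. A sweep pinned at $P$ cannot perform these moves.

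A second, smaller omission: crossings lying in the strip but outside the wedge (the paper's $m_\infty$ crossings) are untouched by your sweep and also violate (3); they must be moved out to $t<t_0$ (the paper's Case 1). Likewise, in the bigon case "the sweep empties the bigon completely" still needs the cancellation of the residual, necessarily even, number of crossings on $\alpha_2$ by $V2$ moves (the paper's parity argument under the assumption that the diagram is $V2$-reduced). The overall strategy --- a continuous sweep read off as a sequence of detour moves, versus the paper's induction on the leftmost offending crossing --- is a legitimate alternative organization, but as written it proves a weaker statement than Lemma \ref{lemma:1}.
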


\begin{figure} [h]\centering
	\center
	\includegraphics[scale=0.6]{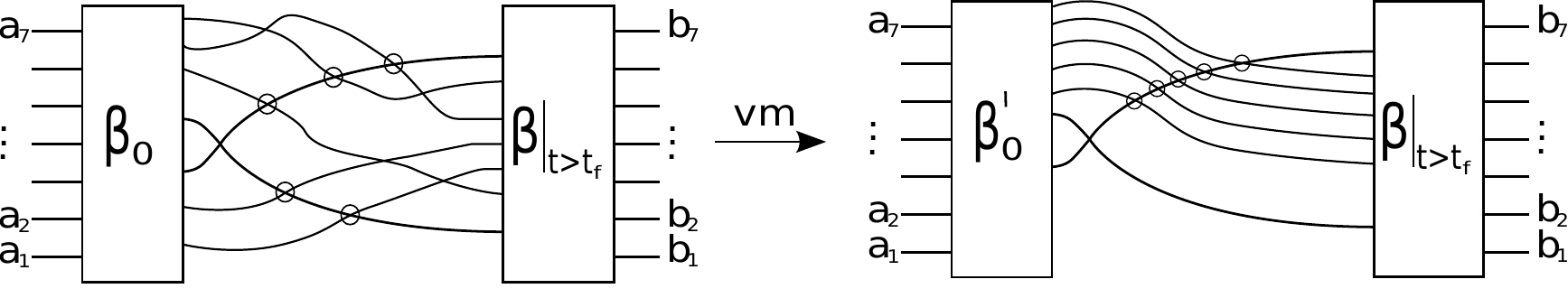} 
	\caption{Lemma \ref{lemma:1}.}
\end{figure}

\begin{proof}
  Suppose $\alpha_1(t_f)\neq \alpha_2(t_f)$ and that we have reduced $\beta$ by all the possible $V2$ moves that may be made on it. Note that $\alpha_1$, $\alpha_2$ and $y=t_f$ form a triangle $D$. 

  Let $C$ be the set of crossings in $\beta$ such that their projections on the first component are in the open interval $(t_0,t_f)$. Let $m_0$ be the number of crossings in $C$ that are in the interior of $D$, $m_1$ the number of crossings in $C$ that are on $\alpha_1$, $m_2$ the number of crossings in $C$ that are on $\alpha_2$, and $m_{\infty}$ the number of crossings in $C$ that are outside $D$. 

  We argue by induction on $m=m_0 + m_1 + m_{\infty}$. Suppose $m=1$. Then $C=\{c_1,\dots,c_{m_2},d\}$, where $c_1,\dots,c_{m_2}$ are the crossings on $\alpha_2$ and $d$ is the other crossing. We have four cases:
  \begin{enumerate}
    \item The crossing $d$ is outside $D$ (Figure \ref{lemaA1}). We move $d$ by an isotopy to the left part of $I\times [t_0,t_f)$.
     \begin{figure}[h]\centering
      \includegraphics[scale=0.6]{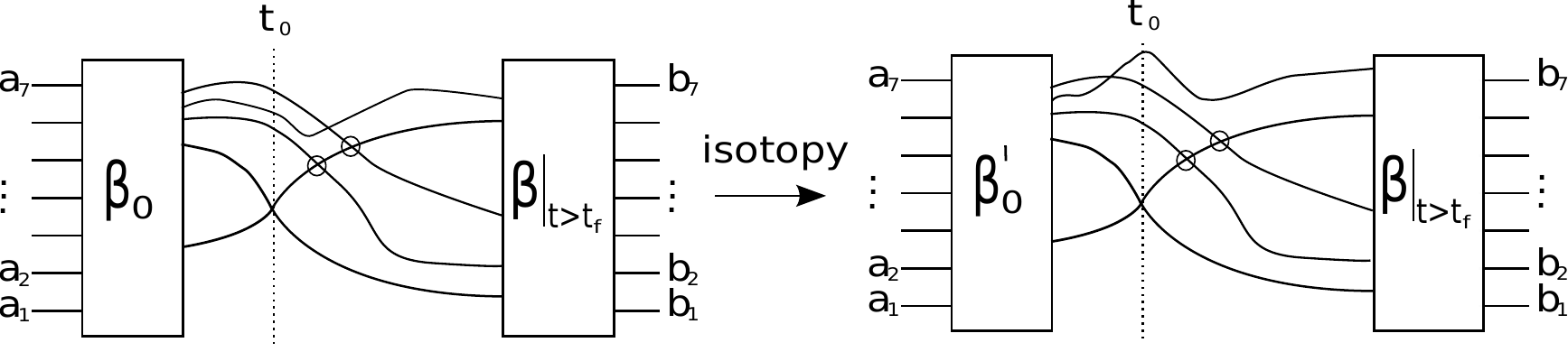}
      \caption{Case 1, Lemma \ref{lemma:1}.}
      \label{lemaA1}
    \end{figure}

    \item The crossing $d$ is inside $D$ (Figure \ref{lemaA2}). There are two strands entering $D$ that meet at the crossing $d$. We apply a move of type $M$, $M'$ or $V3$ according to the value of the crossing $d$, and then apply Case 1.
     \begin{figure}[h]\centering
      \includegraphics[scale=0.6]{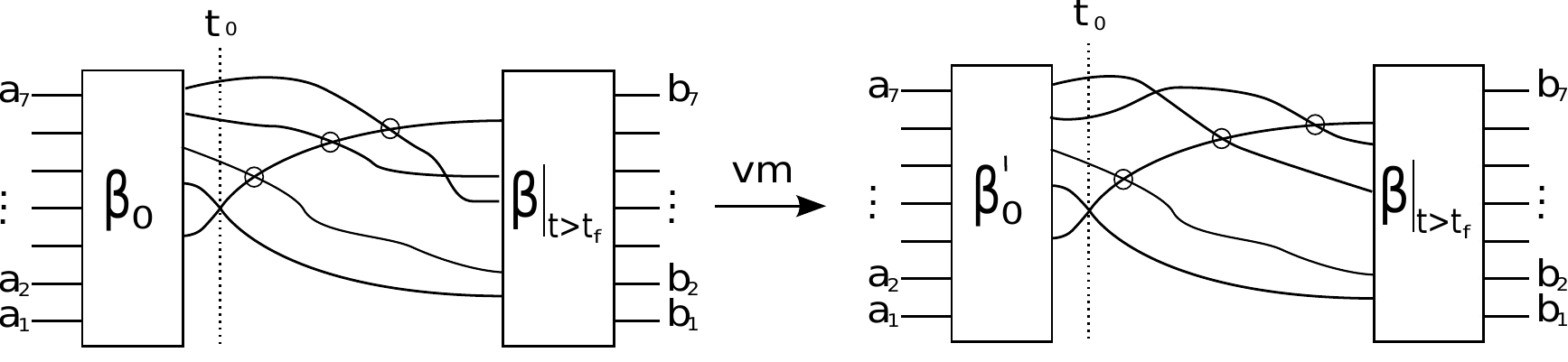}
      \caption{Case 2, Lemma \ref{lemma:1}.}
      \label{lemaA2}
    \end{figure}

    \item The crossing $d$ is on $\alpha_1$ in such a way that the strand making the crossing with $\alpha_1$ goes out $D$ (Figure \ref{lemaA3}). Then such strand also has a crossing with $\alpha_2$ and is the leftmost crossing on it. Up to isotopy we may apply a move of type $M$, $M'$ or $V3$ according to the value of the crossing $p$. Then we are done. 
     \begin{figure}[h]\centering
      \includegraphics[scale=0.6]{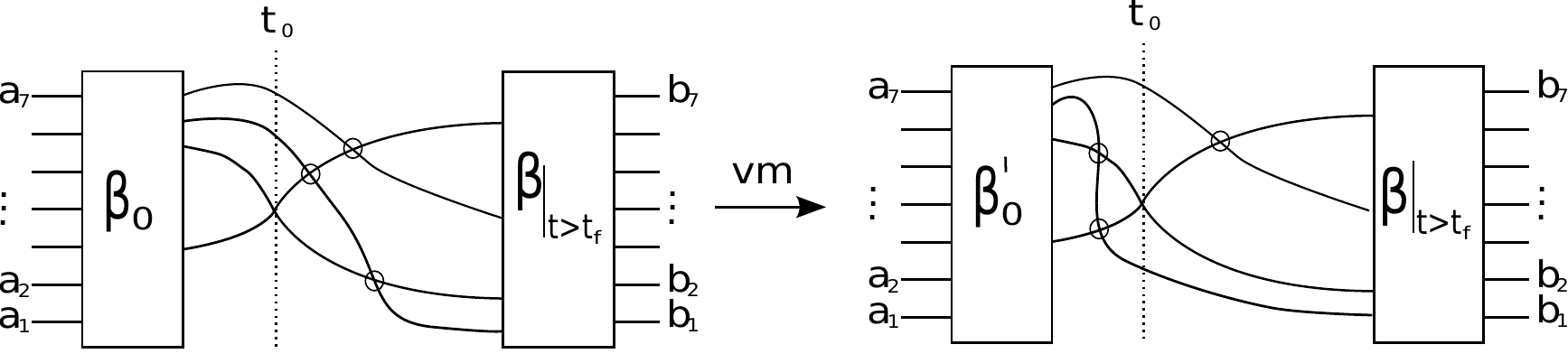}
      \caption{Case 3, Lemma \ref{lemma:1}.}
      \label{lemaA3}
    \end{figure}

    \item The crossing $d$ is on $\alpha_1$ in such a way that the strand making the crossing with $\alpha_1$ enters $D$ (Figure \ref{lemaA4}). Let $\beta_j$ be that strand. We apply a move of type $V2$ to $\beta_j$ and $\beta_2$ just before the crossing $p$, then we apply a move of type $P$, $P'$ or $V3$ according to the value of the crossing $p$. In this way now $d$ is a crossing on $\alpha_2$.
      \begin{figure}[h]\centering
	\includegraphics[scale=0.6]{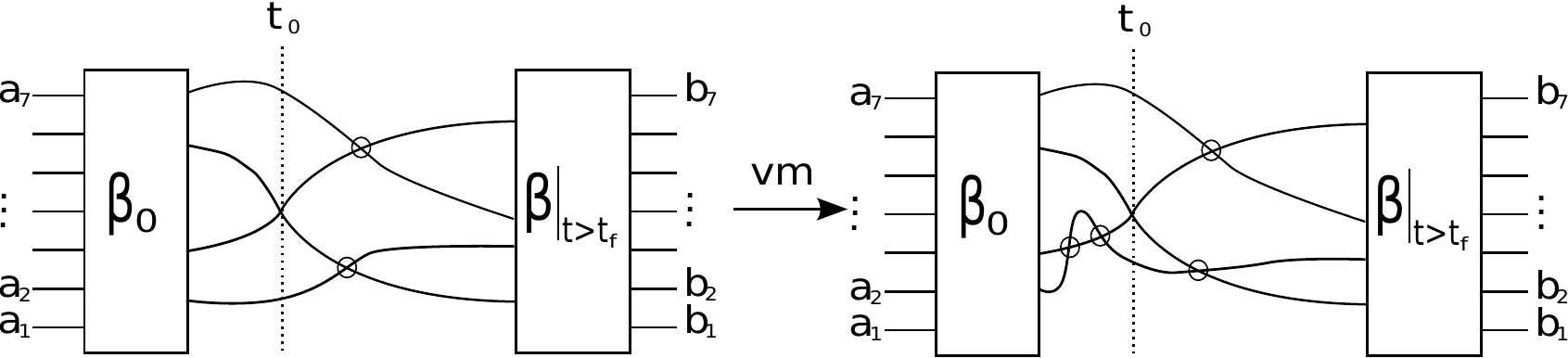} 
	
	\vspace{8pt}
	\includegraphics[scale=0.6]{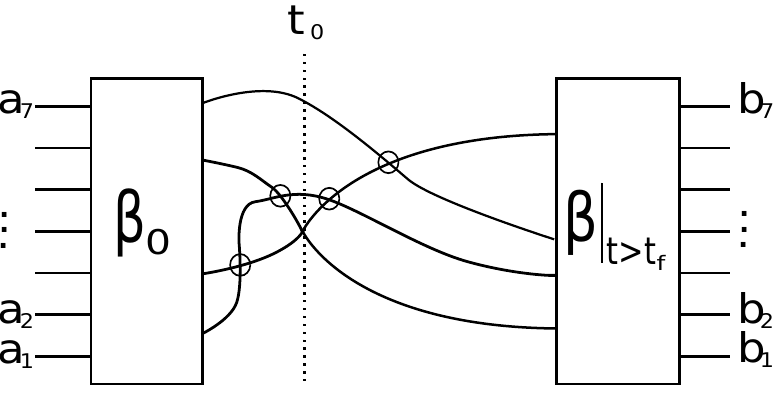} 
	\caption{Case 4, Lemma \ref{lemma:1}.}
	\label{lemaA4}
      \end{figure}
  \end{enumerate}

  Note that, in the above four cases, we have not deformed $\beta$ for $t\geq t_f$. Moreover, up to isotopy, $\beta_1$ and $\beta_2$ remain unchanged and in their restriction to $(0,t_0)\times I$ we have added only virtual crossings.  

  Now if $m\geq 2$ take $d$ the leftmost crossing in $(t_0,t_f) \times I$ such that $d$ is not on $\alpha_2$. We apply the case $m=1$ in order to get rid of this crossing and reduce the obtained diagram by all the possible $V2$ moves in it. By the induction hypothesis, we have proven (1,2,3) of the lemma.
  
  \begin{figure}[h]\centering
      \includegraphics[scale=0.6]{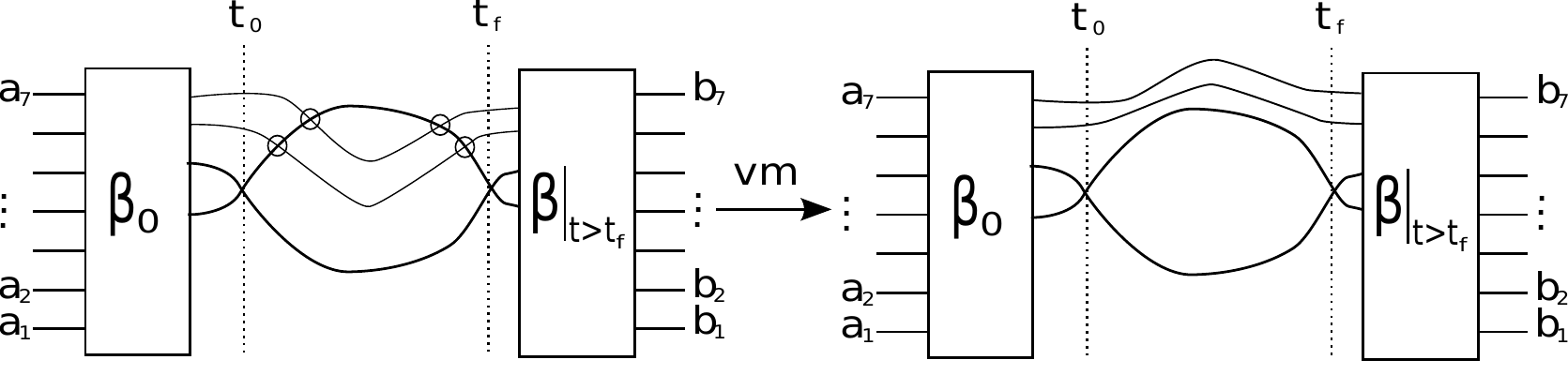}
      \caption{Case $\alpha_1(t_f)=\alpha_2(t_f)$, Lemma \ref{lemma:1}.}
      \label{lemaA5}
    \end{figure}

  Now suppose $\alpha_1(t_f)=\alpha_2(t_f)$. Then $\alpha_1$ and $\alpha_2$ form a bigon $D$. We apply the same reasoning as above in order to have only crossings on $\alpha_2$ (Figure \ref{lemaA5}).  Suppose that $m_2\neq 0$, then it necessarily is even (as each strand entering the bigon must go out by $\alpha_2$). We can apply $\frac{m}{2}$ moves of type $V2$ to get rid of the crossings in $\alpha_2$. But this is a contradiction as in each inductive step we are reducing the diagram by all the possible $V2$ moves in it. Therefore we can chose $\beta'$ such that there are no crossings in $(t_0,t_f) \times I$.  With this we complete the proof of the lemma.
\end{proof}

\begin{corollary}\label{cor:2}
      Let $\beta$ be a virtual braid diagram and let $\alpha_1, \alpha_2$ be two primitive arcs of $\beta$ such that: 
	\begin{enumerate}
		\item The arcs $\alpha_1$ and $\alpha_2$ start in the same point, say $p$ (thus a crossing, it may be virtual or regular).
		\item The arcs $\alpha_1$ and $\alpha_2$ end at the same time, say $t_f$.
	\end{enumerate}
	Then there exists a virtual braid diagram $\beta'$ virtually equivalent to $\beta$ such that:
	\begin{enumerate}
		\item If $\beta_1$ and $\beta_2$ are the strands corresponding to $\alpha_1$ and $\alpha_2$, respectively, then up to isotopy they remain unchanged in $\beta'$ and in their restriction to $(0,\pi_1(p)=t_0)$ we add only virtual crossings. 
		\item The diagrams $\beta$ and $\beta'$ coincide for $t\geq t_f$, i.e. $\beta|_{t\geq t_f} = \beta'|_{t\geq t_f}$.
		\item Let $\alpha_1 \cap \alpha_2 = \{p=p_1,p_2,\dots, p_m \}$, numbered so that $\pi_1(p_i) < \pi_1(p_{i+1})$ for $1\leq i \leq m-1$.
		\begin{enumerate}
		  \item If $\pi_1(p_m)=t_f$, then in $(t_0, t_f)\times I$ there are no crossings except, eventually, $p_2,\dots,p_{m-1}$.	  
		  \item If $\pi_1(p_m)\neq t_f $, then in $(t_0,\pi_1(p_m))\times I$ there are no crossings except eventually $p_2,\dots,p_{m-1}$ and in $(\pi_1(p_m),t_f)\times I$ there are only virtual crossings with the corresponding upper segment of $\alpha_1$ or $\alpha_2$.
		\end{enumerate}
	\end{enumerate}  
\end{corollary}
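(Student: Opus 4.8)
The plan is to decompose the pair $(\alpha_1,\alpha_2)$ along its mutual intersections and then clear each resulting region by a single application of Lemma \ref{lemma:1}. First I would record that, apart from the common starting point $p=p_1$ (which may be a regular crossing), every intersection $p_i$ with $i\geq 2$ lies in the interior of both primitive arcs and is therefore a virtual crossing. Ordering these points by first coordinate as $p_1,\dots,p_m$, two consecutive points $p_i,p_{i+1}$ bound a bigon $D_i$: the sub-arcs of $\alpha_1$ and $\alpha_2$ between them share both endpoints and, by the ordering, are otherwise disjoint. When $\pi_1(p_m)\neq t_f$ there is in addition a terminal region $(\pi_1(p_m),t_f)\times I$ on which $\alpha_1$ and $\alpha_2$ do not meet and where they end at a common time but distinct points. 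I would dispose of this terminal region first, applying the $\alpha_1(t_f)\neq\alpha_2(t_f)$ case of Lemma \ref{lemma:1} to it; this leaves only virtual crossings with one of the two arcs (conclusion (3) of Lemma \ref{lemma:1}) — the upper one, which is precisely the claim of 3(b) — and reduces the problem on $[t_0,\pi_1(p_m)]$ to the situation where the two arcs end at the common point $p_m$, i.e. to case (a).

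It then remains to treat case (a), which I would do by induction on the number of bigons. The sub-arcs bounding the rightmost bigon $D_{m-1}$ satisfy hypotheses (1)--(3) of Lemma \ref{lemma:1} with common endpoint $p_{m-1}$ and common terminal time $\pi_1(p_m)$, and since they meet again at $p_m$ this is the $\alpha_1(t_f)=\alpha_2(t_f)$ case, so conclusion (4) removes every crossing from the interior of $D_{m-1}$. Discarding $D_{m-1}$, the sub-configuration on $[t_0,\pi_1(p_{m-1})]$ is again of the form handled in case (a), now with one fewer bigon and terminal time $\pi_1(p_{m-1})$, so the inductive hypothesis applies and finishes the reduction; the base case is a single bigon, which is exactly the equal-endpoint case of Lemma \ref{lemma:1}. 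After the sweep the only crossings surviving in $(t_0,\pi_1(p_m))\times I$ are the virtual crossings $p_2,\dots,p_{m-1}$, giving 3(a) (and, combined with the terminal region, 3(b)); conclusion (2) of the corollary is inherited from the application that has terminal time $t_f$, and conclusion (1) from the fact that every application keeps $\beta_1,\beta_2$ unchanged up to isotopy while adding only virtual crossings to the left of its own initial time.

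The step I expect to demand the most care is showing that these successive applications of Lemma \ref{lemma:1} do not undo one another. The essential feature is that Lemma \ref{lemma:1} alters the diagram only for $t<t_f$ and leaves it unchanged for $t\geq t_f$; carrying out the reductions from right to left therefore freezes every region already put into normal form, because each later application has a strictly smaller terminal time. The crossings that Lemma \ref{lemma:1} does create all lie to the left of its initial time, where they land either inside a bigon not yet processed — to be removed at a subsequent step — or, for the leftmost bigon, in $(0,t_0)\times I$, where they are exactly the added virtual crossings permitted by conclusion (1). I would also note that since each application keeps $\beta_1$ and $\beta_2$ unchanged up to isotopy, the mutual intersections $p_2,\dots,p_{m-1}$ persist throughout (possibly sliding in the first coordinate, after which the bigons are re-identified), so the hypotheses of Lemma \ref{lemma:1} remain valid at every stage of the induction.
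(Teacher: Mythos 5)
Your proposal is correct and follows essentially the same route as the paper: an induction on the number of intersection points, first reducing the terminal region $[\pi_1(p_m),t_f]$ via the unequal-endpoint case of Lemma \ref{lemma:1} when $\pi_1(p_m)\neq t_f$, and then clearing the bigons from right to left with the equal-endpoint case, relying on the fact that each application leaves the diagram unchanged to the right of its own terminal time. Your explicit justification that successive applications do not undo one another is a point the paper leaves implicit, but the underlying argument is the same.
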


  \begin{figure}[h]\centering
    \includegraphics[scale=0.6]{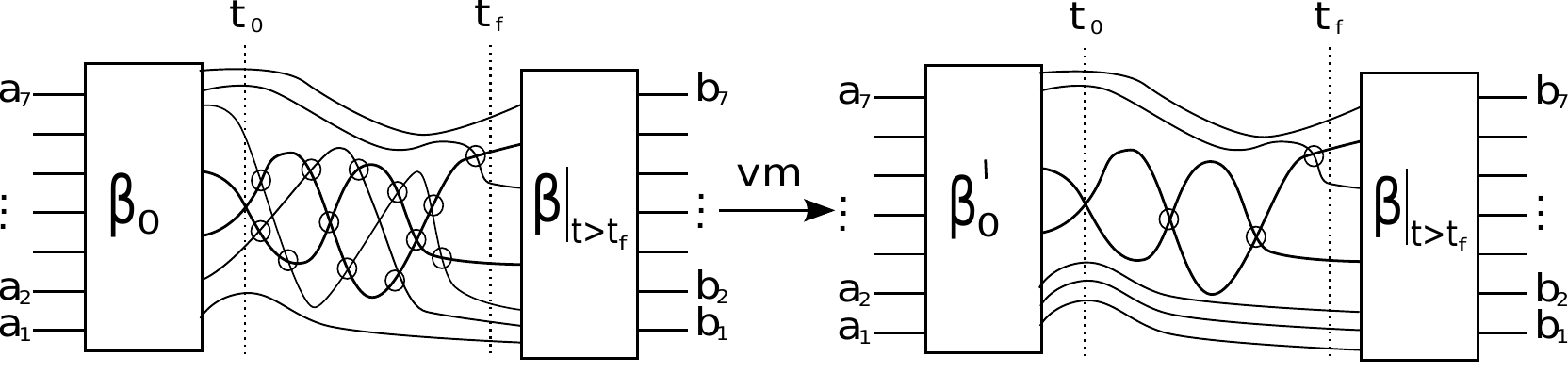} 
    \caption{Corollary \ref{cor:2}.}
  \end{figure}

\begin{proof}
  Suppose that $\pi_1(p_1)< \dots <\pi_1(p_m)$. We argue by induction on $m$. Suppose $m=1$. Then necessarily $p_1 = p$ and we have the hypothesis of Lemma \ref{lemma:1}. 

  Suppose $m>1$ and that $\alpha_1$ and $\alpha_2$ end in the same point $p_m$. Consider the restrictions of $\alpha_1$ and $\alpha_2$ to $[\pi_1(p_{m-1}), t_f]$ and apply Lemma \ref{lemma:1}. We obtain a virtually equivalent diagram $\beta'$ which does not have crossings neither on the restriction of $\alpha_1$ nor on the restriction of $\alpha_2$. Furthermore, up to isotopy the strands corresponding to $\alpha_1$ and $\alpha_2$  remain unchanged and their restrictions to $[t_0,\pi_1(p_{m-1})] $ go only through virtual crossings, i.e. they are primitive arcs whose intersection has $m-1$ points. Applying induction hypothesis on them, we have proved this case.
  
  Suppose $m>1$ and that $\alpha_1$ and $\alpha_2$ do not end in the same point. Consider the restrictions of $\alpha_1$ and $\alpha_2$ to $[\pi_1(p_{m}), t_f]$ and apply Lemma \ref{lemma:1}. We obtain a virtually equivalent diagram $\beta'$ which may have only virtual crossings with the corresponding upper segment of $\alpha_1$ or $\alpha_2$. Furthermore, up to isotopy, the strands corresponding to $\alpha_1$ and $\alpha_2$  remain unchanged and their restrictions to $[t_0,\pi_1(p_{m})]$ go only through virtual crossings, i.e. they are primitive arcs whose intersection has $m$ points and satisfies the condition of the preceding case. With this we conclude the proof.  
\end{proof}

\begin{corollary}\label{cor:3}
	Given a virtual braid diagram $\beta$ in general position. Let $c_1$ and $c_2$ be two regular crossings not related in $P(\beta)$ (Definition \ref{rmk:GaussOrder}) and such that:
	\begin{enumerate}
		\item  In the total order on $R(\beta)$ (Definition \ref{rmk:ordCross}), $D(\beta)$, $c_1> c_2$. 
		\item There is no regular crossing between $c_1$ and $c_2$ in $D(\beta)$.
	\end{enumerate}
	Then there exists a virtual braid diagram $\beta'$ virtually equivalent to $\beta$ with $c_2 > c_1$ in $D(\beta ')$, and such that there is no regular crossing between them. 
	
	 Furthermore, the diagrams $\beta$ and $\beta'$ coincide for $t>t_f$. In particular the total order on the set of elements smaller than $c_2$ in $D(\beta)$ is preserved in $D(\beta')$, i.e. $c_2>d_1>d_2$ in $D(\beta)$, then $c_1>d_1>d_2$ in $D(\beta')$.  
\end{corollary}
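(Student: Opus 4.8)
The plan is to reduce the statement to the combing machinery of Corollary \ref{cor:2}, applied to the two strands emanating from the \emph{earlier} crossing $c_1$. First I would set $t_0=\pi_1(c_1)$ and $t_f=\pi_1(c_2)$; since $c_1>c_2$ in $D(\beta)$ we have $t_0<t_f$, and by hypothesis (2) no regular crossing of $\beta$ has first coordinate in $(t_0,t_f)$. Hence the two strands leaving $c_1$ to the right, restricted to $[t_0,t_f]$, are primitive arcs $\alpha_1,\alpha_2$ that start at the common point $c_1$, end at the common time $t_f$, and meet in the interior only at virtual crossings. This is precisely the hypothesis of Corollary \ref{cor:2}. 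I would also record the key combinatorial fact behind hypothesis ``$c_1,c_2$ not related in $P(\beta)$'': two arrows of a braid-Gauss diagram that share a strand have their relative position fixed by the order of their feet along that strand (along which $\pi_1$ is increasing), hence are comparable in $P(\beta)$; therefore $c_1$ and $c_2$ lie on four pairwise distinct strands, and in particular $c_2$ is a regular crossing of two strands \emph{other} than $\alpha_1,\alpha_2$, so it is neither created nor destroyed when I clean up $\alpha_1,\alpha_2$.

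Next I would apply Corollary \ref{cor:2} to $\alpha_1,\alpha_2$. They cannot end at the same point: that would force a second crossing at time $t_f$ between the $c_1$-strands, contradicting general position since $c_2$ already sits there. So I land in case (3)(b) and obtain $\beta'\sim_{vm}\beta$ which coincides with $\beta$ for $t\ge t_f$, adds only virtual crossings to $\alpha_1,\alpha_2$ for $t<t_0$ (so the $D$-order of the regular crossings preceding $c_1$ is untouched), and in which the whole strip $(t_0,t_f)\times I$ carries only virtual crossings lying on the two legs of $c_1$. In this configuration one leg of $c_1$ is essentially crossing-free up to $t_f$ and the other carries only virtual crossings.

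I would then push the regular crossing $c_1$ rightward through these virtual crossings by the very moves used in the $m=1$ analysis of Lemma \ref{lemma:1} — an $M$, $M'$ or $V3$ move (together with $V2$ cancellations for any mutual virtual crossings of the two legs) for each virtual crossing encountered — until $c_1$ sits immediately to the left of $c_2$, all while keeping the region $t\ge t_f$ fixed. At this point $c_1$ and $c_2$ are two regular crossings adjacent in $D$, supported on four disjoint strands with only virtual crossings between them. Because their strands are disjoint, after inserting virtual crossings (detour / $V2$ moves) to place the two crossing pairs in disjoint height intervals, I can exchange their times by a local isotopy, producing $c_2>c_1$ in $D(\beta')$ with no regular crossing between them. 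Since everything with first coordinate $\ge t_f$ was never moved, the induced order on the crossings after $c_2$ is preserved, which gives the final assertion.

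The main obstacle I expect is this last step: turning the qualitative output of Corollary \ref{cor:2} into a genuine finite sequence of legal virtual and mixed moves that transports the regular crossing $c_1$ rightward and then transposes it with $c_2$, while verifying that no regular crossing is ever created strictly between them and that the region $t\ge t_f$ is left rigorously untouched. The combing is the engine; the careful bookkeeping of which mixed move applies at each virtual crossing, and the observation that interleaved heights can always be separated by $V2$ / detour moves without affecting the Gauss diagram, is where the real work lies.
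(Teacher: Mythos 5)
Your first phase is exactly the paper's: the observation that incomparability in $P(\beta)$ forces $c_1$ and $c_2$ onto four distinct strands, and the application of Corollary \ref{cor:2} to the two primitive arcs leaving $c_1$, is the intended engine. The difference --- and the gap --- lies in what you ask the combing to do. By taking $t_f=\pi_1(c_2)$ you deliberately park $c_2$ on the right edge of the strip, outside the open interval that Lemma \ref{lemma:1} cleans, and you are then forced into a second phase: transporting the regular crossing $c_1$ rightward through the residual virtual crossings and then transposing it with $c_2$. That phase is only asserted. In particular, a regular crossing cannot be slid past a virtual crossing sitting on one of its own legs by an isotopy alone (the order of crossings along a single strand is an isotopy invariant), and after Corollary \ref{cor:2} the surviving virtual crossings lie precisely on the legs of $c_1$; an $M$, $M'$ or $V3$ move only applies when the interfering strand meets \emph{both} legs, so each such crossing must first be doubled by a $V2$/detour move before $c_1$ can pass it. You flag this bookkeeping as ``where the real work lies'' and do not carry it out, so as written the proof is incomplete.

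The paper sidesteps the entire second phase by choosing $t_f$ strictly larger than $\pi_1(c_2)$, with no crossing in $(\pi_1(c_2),t_f)\times I$. Then $c_2$ is one of the crossings in the open strip $(t_0,t_f)\times I$ that is \emph{not} on $\alpha_1\cup\alpha_2$, so Cases 1 and 2 of Lemma \ref{lemma:1} sweep it (by isotopy, after at most one mixed or $V3$ move) to the left of $t_0=\pi_1(c_1)$. After Corollary \ref{cor:2} the strip $(\pi_1(c_1),t_f]\times I$ contains only virtual crossings, the diagram is unchanged for $t\geq t_f$, and one reads off $c_2>c_1$ in $D(\beta')$ with no regular crossing between them --- no transport of $c_1$ and no final transposition are needed. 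If you replace your choice of $t_f$ by the paper's, your argument closes immediately and the ``main obstacle'' you identify disappears.
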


\begin{proof}
	Let $t_f > \pi_1(c_2)$ such that there is no crossing in $(\pi_1(c_2),t_f)\times I$ and, let $\alpha_1$ and $\alpha_2$ be the primitive arcs coming from  the regular crossing $c_1$ and finishing in $t_f$. Applying the last corollary to $\alpha_1$ and $\alpha_2$, we obtain a virtually equivalent diagram $\beta'$ such that in $(\pi_1(c_1),t_f]\times I$ there are only virtual crossings and $\beta$ remains unchanged for $t\geq t_f$. Thus $c_2 > c_1$ in $D(\beta')$ and if $c_2>d_1>d_2$ in $D(\beta)$, then $c_1>d_1>d_2$ in $D(\beta')$. 
\end{proof}

Given two orders, $R$ and $R'$ over a set $X$, we say that  $R'$ is compatible with $R$ if $R\subset R'$.

\begin{lemma}\label{lemma:2}
	Let $\beta$ be a virtual braid diagram and let $\tilde{R}$ be a total order on $R(\beta)$ compatible with $P(\beta)$. Then there exists a virtual braid diagram $\beta'$ virtually equivalent to $\beta$ such that $D(\beta')=\tilde{R}$.
\end{lemma}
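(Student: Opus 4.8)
The plan is to separate the statement into a purely combinatorial fact about linear extensions of the poset $P(\beta)$ and its topological realization via Corollary \ref{cor:3}. First I would put $\beta$ in general position by an isotopy, which is legitimate since isotopy is part of virtual equivalence; this makes $D(\beta)$ well defined. The first observation is that $D(\beta)$ is itself a total order compatible with $P(\beta)$: by Definition \ref{rmk:GaussOrder}, $P(\beta)=P(G(\beta))$ is the intersection of all total orders on the arrows of $G(\beta)$ coming from orientation preserving reparametrizations of the underlying intervals, and $D(\beta)$ is precisely the total order induced by the parametrization coming from $\beta$ itself. Hence $P(\beta)\subseteq D(\beta)$, so that both $D(\beta)$ and $\tilde R$ are linear extensions of the finite poset $P(\beta)$.

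The combinatorial core is the following standard fact, which I would prove by induction on $|R(\beta)|$: any two linear extensions of a finite poset are connected by a finite sequence of transpositions, each exchanging two consecutive elements that are incomparable in the poset. Concretely, let $x$ be the largest element of $\tilde R$. Since $\tilde R$ extends $P(\beta)$, no element is $P(\beta)$-above $x$, i.e.\ $x$ is a maximal element of $P(\beta)$. Therefore, in the current order $D(\beta)$, the element immediately larger than $x$ can be neither $P(\beta)$-below $x$ (this would contradict compatibility of $D(\beta)$ with $P(\beta)$) nor $P(\beta)$-above it (by maximality), so it is $P(\beta)$-incomparable to $x$. Sliding $x$ upward by such incomparable adjacent transpositions brings it to the top of the order; deleting $x$ and recursing on the remaining crossings, whose exchanges never involve $x$, finishes the induction.

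Each adjacent transposition produced in this way exchanges two $P(\beta)$-incomparable regular crossings $c_1>c_2$ with no regular crossing between them, which is exactly the hypothesis of Corollary \ref{cor:3}. That corollary yields a virtually equivalent diagram in which $c_2>c_1$, while the order of all crossings to the right of $c_2$, and the diagram for $t\geq t_f$, are left unchanged. Crucially, virtual equivalence does not alter the Gauss diagram (Lemma \ref{lemma:01}), hence it preserves $P(\beta)$; so the incomparability data remains valid after each step and the procedure can be iterated. Composing these virtual equivalences along the sequence of transpositions constructed above transforms $\beta$ into a virtually equivalent $\beta'$ with $D(\beta')=\tilde R$.

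The main obstacle is the bookkeeping that guarantees Corollary \ref{cor:3} applies at every step: after each exchange the diagram must still be in general position, the next pair to be swapped must be genuinely consecutive in the current $D$, and that pair must be $P(\beta)$-incomparable. The incomparability is exactly what the maximality argument delivers, while the ``coincide for $t>t_f$'' and ``order on smaller elements is preserved'' clauses of Corollary \ref{cor:3} keep the untouched part of the order stable from one step to the next. The only genuinely delicate points are establishing compatibility of $D(\beta)$ with $P(\beta)$ at the outset and using the invariance of $P(\beta)$ under virtual moves to license the iteration; neither is deep, but both must be stated carefully.
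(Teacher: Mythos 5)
Your proposal is correct and follows essentially the same route as the paper: both arguments realize the passage from $D(\beta)$ to $\tilde R$ as a finite sequence of adjacent transpositions of $P(\beta)$-incomparable crossings, each implemented by Corollary \ref{cor:3}, using its ``unchanged for $t\ge t_f$'' clause to keep the already-sorted part stable. The only cosmetic difference is that you bubble the $\tilde R$-largest element to the top while the paper matches the orders from the bottom up, and you make explicit two points the paper leaves implicit (that $P(\beta)\subseteq D(\beta)$ and that virtual moves preserve $P(\beta)$), which is a welcome clarification rather than a new idea.
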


  \begin{figure}[h]\centering
    \includegraphics[scale=0.6]{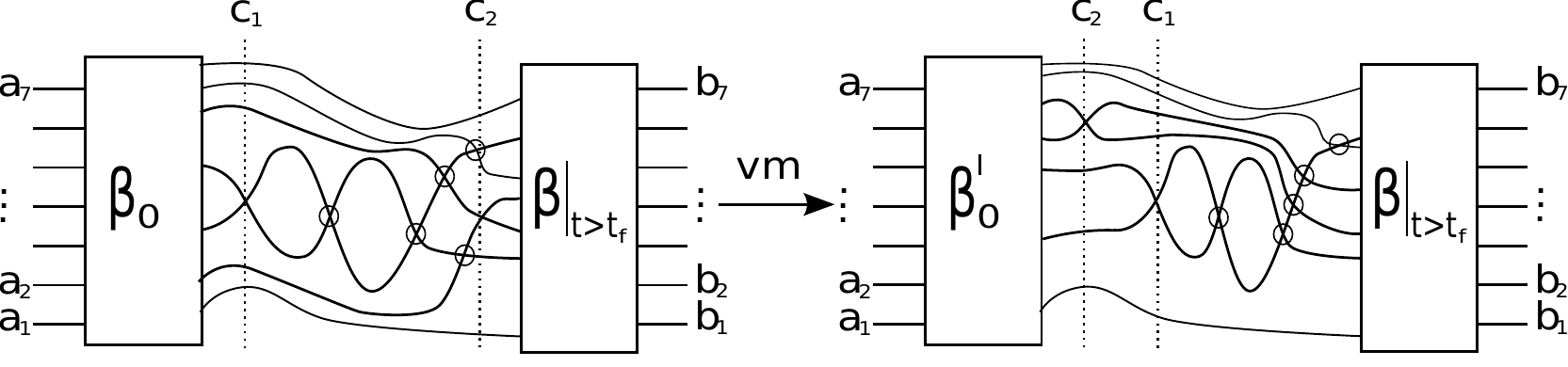} 
    \caption{Lemma \ref{lemma:2}.}
  \end{figure}

\begin{proof}
	Suppose  
	$$\tilde{R}= \{c_1 > \dots > c_m\},$$ 
	$$D(\beta) = \{d_1 > \dots > d_m\},$$
	and that $c_l = d_l$ for $l>k$, $c_k\neq d_k$  and $c_k=d_j >d_k $ in $D(\beta)$. Note that for $k\geq l > j$, $d_j$ is not related with $d_l$ in $P(\beta)$. Applying the last corollary $k-j$ times we construct a virtual braid diagram $\beta'$ virtually equivalent to $\beta$ such that if $D(\beta')=\{d_1'> \dots > d_m'\}$ then $c_l=d_l'$ for $l\geq k$. Applying this procedure inductively we obtain the lemma. 
\end{proof}

\begin{lemma}\label{lemma:3}
	Let $\beta_1$ and $\beta_2$ be two virtual braid diagrams on $n$ strands, and let $\alpha_1$ and $\alpha_2$ be two primitive arcs of $\beta_1$ and $\beta_2$ respectively, such that:
	\begin{enumerate}
		\item The extremes of $\alpha_1$ and $\alpha_2$ coincide.
		\item $\alpha_1$ and $\alpha_2$ form a bigon $D$.
		\item $\beta_1 \setminus \alpha_1$ and $\beta_2 \setminus \alpha_2$ coincide.
		\item There are no crossings in the interior of $D$.
	\end{enumerate}
	Then $\beta_1$ and $\beta_2$ are virtually equivalent by isotopy and moves of type $V2$.
\end{lemma}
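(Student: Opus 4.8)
The plan is to realize the passage from $\beta_1$ to $\beta_2$ as a single sweep of the arc $\alpha_1$ across the empty bigon $D$ until it lands on $\alpha_2$, and to account for the crossings that appear or disappear during this sweep as $V2$ moves or as pure isotopies. Working in the coordinates of Definition \ref{def:vbd}, since $\alpha_1$ and $\alpha_2$ are primitive arcs parametrized by $\pi_1$ that meet only at their common extremes, they are graphs $y=f_1(x)$ and $y=f_2(x)$ over a common interval $[x_0,x_f]$, agreeing at the endpoints and, without loss of generality, with $f_1(x)<f_2(x)$ on $(x_0,x_f)$. By hypothesis (4) the interior of $D$ contains no crossings, so the pieces of the common diagram $\beta_1\setminus\alpha_1=\beta_2\setminus\alpha_2$ that traverse $D$ are pairwise disjoint inside $D$ and each is a graph over its $\pi_1$-range; in particular, any intersection of $\alpha_1$ or $\alpha_2$ with such a piece lies on $\partial D$ and is virtual, since $\alpha_i$ is primitive.

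First I would define the sweep $\alpha^{(s)}$, $s\in[0,1]$, by $\alpha^{(s)}(x)=\bigl((1-s)f_1(x)+s\,f_2(x)\bigr)$ at abscissa $x$, so that $\alpha^{(0)}=\alpha_1$ and $\alpha^{(1)}=\alpha_2$. Each $\alpha^{(s)}$ is again a graph over $[x_0,x_f]$ with the same extremes, hence a legitimate strand segment (monotone in $\pi_1$), and the whole family is supported in $\bar D$ while fixing the common diagram. The only events during the sweep are the crossings of $\alpha^{(s)}$ with the traversing strands, and I would classify each traversal of such a strand $\gamma$ by whether it enters and leaves $D$ through the lower boundary $f_1$ or the upper boundary $f_2$. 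A traversal from bottom to top (or top to bottom) meets the moving arc in a single virtual point that merely slides along $\gamma$ from $\alpha_1$ to $\alpha_2$, which is a planar isotopy; a traversal entering and leaving through the bottom contributes two virtual crossings on $\alpha_1$ that come together and cancel as the arc rises past the top of the excursion, a $V2$ move; and a traversal entering and leaving through the top contributes two virtual crossings that are born on $\alpha_2$ by the reverse $V2$ move. Summing these contributions over all traversals carries $\beta_1$ to $\beta_2$ using only isotopy and moves of type $V2$.

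To make this rigorous I would arrange, after a small transversality perturbation of the family $\alpha^{(s)}$, that the critical moments (an excursion's extremum being crossed) are all distinct. This is possible precisely because the traversing strands are disjoint inside $D$, so no two events are forced to coincide and no configuration of three mutually crossing arcs ever arises; this is exactly what keeps us within $V2$ and never requires $V3$. Equivalently, the argument can be phrased as an induction on the number of traversals of $D$: the base case is an empty bigon swept by a pure isotopy, and each inductive step peels off one outermost excursion by a single $V2$ move, or slides one transverse strand across by isotopy, reducing to a bigon with one fewer traversal.

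The main obstacle I anticipate is the bookkeeping guaranteeing that each elementary event is genuinely a $V2$ move, namely that the two crossings about to cancel (or to be created) are adjacent along both the arc and the traversing strand, with an empty bigon between them. Here hypothesis (4) is essential: since the interior of $D$ is crossing-free, the small bigon bounded by $\alpha^{(s)}$ and the relevant excursion of $\gamma$ contains no other crossing at the cancellation time, so the $V2$ move applies verbatim. I would also dispose of the degenerate possibilities, such as a traversal tangent to the boundary or the strand carrying $\alpha_1$ itself re-entering $D$, by the same preliminary transversality perturbation, which changes the diagram only up to isotopy.
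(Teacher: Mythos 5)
Your proposal is correct and follows essentially the same route as the paper: the paper also classifies the strands traversing the bigon, eliminates innermost same-side excursions by $V2$ moves (possible precisely because the interior of $D$ is crossing-free), and then slides $\alpha_1$ onto $\alpha_2$ by an isotopy following the remaining through-going strands. Your sweep/transversality formalization and the induction on the number of traversals are just a more detailed rendering of that same argument.
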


\begin{proof}
	First note that each strand entering $D$ must go out. Take a strand $\alpha$ entering $D$ and suppose it is innermost. If it goes out by the same side, as there are no crossings in the interior of $D$, then we can apply a move of type $V2$ and eliminate the two virtual crossings. Therefore we can suppose that each strands entering by one side goes out by the other. Apply an isotopy following the strands crossing the bigon (if there are any) in order to identify the two primitive arcs.
\end{proof}

\begin{lemma}\label{lemma:4}
	Let $\beta_1$ and $\beta_2$ be two virtual braid diagrams on $n$ strands, and let $\alpha_1$ and $\alpha_2$ be two primitive arcs of $\beta_1$ and $\beta_2$ respectively, such that:
	\begin{enumerate}
		\item The extremes of $\alpha_1$ and $\alpha_2$ coincide.
		\item $\alpha_1$ and $\alpha_2$ form a bigon $D$.
		\item $\beta_1 \setminus \alpha_1$ and $\beta_2 \setminus \alpha_2$ coincide.
	\end{enumerate}
	Then $\beta_1$ and $\beta_2$ are virtually equivalent. 
\end{lemma}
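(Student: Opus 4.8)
The plan is to reduce Lemma~\ref{lemma:4} to the already-proven Lemma~\ref{lemma:3} by emptying the interior of the bigon $D$ of crossings, and then to conclude by transitivity of $\sim_{vm}$. Set $\Gamma := \beta_1\setminus\alpha_1 = \beta_2\setminus\alpha_2$ for the common part of the two diagrams, and let $p=\alpha_1(t_0)=\alpha_2(t_0)$ and $q=\alpha_1(t_f)=\alpha_2(t_f)$ be the two shared extremes. Since every strand is monotone in the first coordinate, $D$ sits inside the vertical strip $t_0\le \pi_1\le t_f$ with $\alpha_1$ and $\alpha_2$ as its upper and lower boundary graphs, and every strand of $\Gamma$ meeting the interior of $D$ crosses $\partial D=\alpha_1\cup\alpha_2$ exactly twice. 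Because $\alpha_1$ and $\alpha_2$ are primitive arcs, all of their crossings with $\Gamma$ are virtual, so the only crossings that can lie strictly inside $D$ are crossings of $\Gamma$ with itself.

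First I would produce a virtual braid diagram $\beta_1'\sim_{vm}\beta_1$ in which the arc $\alpha_1$ has been swept down, through the interior of $D$, onto an arc $\alpha_1'$ lying just above $\alpha_2$, so that $\alpha_1'$ and $\alpha_2$ cobound a bigon $D'$ with no crossing in its interior, while $\beta_1'\setminus\alpha_1'$ still equals $\Gamma$ up to isotopy. This is exactly the sweeping mechanism used in the proof of Lemma~\ref{lemma:1}: arguing by induction on the number of crossings strictly inside $D$, at each step I would take the interior crossing $d$ that is topmost (closest to $\alpha_1$) and push $\alpha_1$ just below it. As $\alpha_1$ is primitive it meets the two strands of $\Gamma$ through $d$ only in virtual crossings, so this push is realized by a move of type $M$, $M'$ or $V3$ according to the sign of $d$, possibly preceded by $V2$ moves --- precisely the cases treated in Lemma~\ref{lemma:1}. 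The essential point is that the deformation touches only $\alpha_1$: the strands of $\Gamma$ remain fixed up to isotopy, so the identity $\beta_1'\setminus\alpha_1' = \beta_2\setminus\alpha_2$ is preserved throughout.

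Once the interior of the bigon has been emptied, the arcs $\alpha_1'$ and $\alpha_2$ satisfy all four hypotheses of Lemma~\ref{lemma:3}: their extremes coincide, they bound the crossing-free bigon $D'$, and the complements $\beta_1'\setminus\alpha_1'$ and $\beta_2\setminus\alpha_2$ coincide. Hence Lemma~\ref{lemma:3} gives $\beta_1'\sim_{vm}\beta_2$ by isotopy and $V2$ moves, and combining this with $\beta_1\sim_{vm}\beta_1'$ yields $\beta_1\sim_{vm}\beta_2$, as required.

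The main obstacle is that Lemma~\ref{lemma:1} and Corollary~\ref{cor:2} are stated for two primitive arcs of a single diagram, whereas here the target arc $\alpha_2$ lives in $\beta_2$ and cannot simply be adjoined to $\beta_1$ (doing so would create a degree-three vertex at $p$ and at $q$, so the union would not be a braid diagram). I therefore cannot invoke those results verbatim; instead I must re-run their case analysis as a deformation of the single arc $\alpha_1$ guided by the shape of $\alpha_2$, checking in each case that pushing $\alpha_1$ below the topmost interior crossing is a genuine virtual equivalence of $\beta_1$ that leaves $\Gamma$ unchanged. The delicate bookkeeping is that a strand of $\Gamma$ may enter $D$ through either boundary arc, so the local configuration at $d$ splits into several subcases; verifying that each subcase is cleared by an allowed move, and that no crossing is trapped between the descending $\alpha_1$ and $\alpha_2$, is where the real work lies.
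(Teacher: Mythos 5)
Your skeleton---induct on the number of crossings inside $D$, clear them one at a time with detour moves, and finish with Lemma~\ref{lemma:3}---matches the paper's, but the mechanism you propose is genuinely different, and the difference sits exactly where your acknowledged gap is. You sweep the arc $\alpha_1$ itself downward through $D$ inside the single diagram $\beta_1$, which forces you to push a \emph{long} primitive arc past a ``topmost'' interior crossing; since the two strands forming that crossing need not meet $\alpha_1$ anywhere near it (they may enter and leave $D$ through $\alpha_2$), and since other strands may thread the triangle you want to clear, you are driven to ``re-run the case analysis of Lemma~\ref{lemma:1}''---which you correctly identify as the real work but do not carry out. The paper avoids this entirely: it never moves $\alpha_1$ or $\alpha_2$. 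Letting $c$ be the \emph{leftmost} crossing in $D$, it inserts a short transversal arc $a'$ from $\alpha_1(t_0)$ to $\alpha_2(t_1)$ just to the left of $c$ (so $a'$ meets the two strands entering $c$, all its crossings being declared virtual), and replaces the initial segment of $\alpha_2$ so as to obtain the hybrid arc $\alpha_1|_{[\pi_1(p),t_0]}*a'*\alpha_2|_{[t_1,\pi_1(q)]}$. The resulting diagram $\beta_2'$ is equivalent to $\beta_2$ by Lemma~\ref{lemma:3} applied to the crossing-free sub-bigon cut off on the left, and a \emph{single} $V3$/$M$/$M'$ move slides the short arc $a'$ past $c$, giving a diagram $\beta_1'$ whose bigon with $\beta_1$ has $m-1$ interior crossings; the induction hypothesis then yields $\beta_1\sim_{vm}\beta_1'\sim_{vm}\beta_2'\sim_{vm}\beta_2$. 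So Lemma~\ref{lemma:3} is invoked at every step, the detour move is genuinely local, and no Lemma~\ref{lemma:1}-style case analysis is needed: that is what peeling crossings off horizontally (leftmost first) with an auxiliary transversal arc buys over peeling them off vertically (topmost first) by deforming $\alpha_1$. Your route can presumably be completed, but only at the cost of essentially reproving Lemma~\ref{lemma:1} for an arc chasing a ``phantom'' target in another diagram, so as written the proof is not done. One further small inaccuracy: a strand of $\Gamma$ meeting the interior of $D$ crosses $\partial D$ exactly twice \emph{per component of its intersection with $D$}, but it may enter and leave $D$ several times.
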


 \begin{figure}[h]\centering
    \includegraphics[scale=1.0]{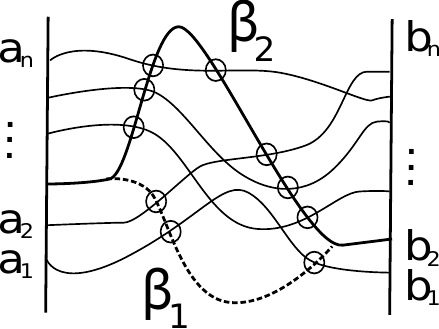} 
    \caption{Lemma \ref{lemma:4}.}
  \end{figure}

\begin{proof}
	Call $p$ and $q$ the starting and ending points of $\alpha_1$, and set $\beta= \beta_1 \setminus \alpha_1$. Let $m$ be the number of crossings inside $D$. We argue by induction on $m$. If $m=0$ we apply the last lemma. 
	
	Suppose $m\geq 1$ and let $c$ be the leftmost crossing inside $D$. Choose $t_0$ and $t_1$ so that $\pi_1(p)<t_0<t_1 < \pi_1(c)$ and so that there are no crossings in $D\cap ((t_0,\pi_1(c)) \times I)$. Draw a line $a'$ joining $\alpha_1(t_0)$ and $\alpha_2(t_1)$. Note that $a'$ intersects the two incoming strands that compose $c$. To the crossings of $\beta$ with $a'$ assign virtual crossings. Consider the following primitive arcs:
	
\begin{align*}
	c_1 &=\alpha_1|_{[\pi_1(p),t_0] }*a'  	& 	c_3 &=a'*\alpha_2|_{[t_1,\pi_1(q)]  }		\\
	c_2 &=\alpha_2|_{[\pi_1(p),t_1] }		&	c_4 &=\alpha_1|_{[t_0,\pi_1(q)]  }.				
\end{align*}

 \begin{figure}[h]\centering
    \includegraphics[scale=0.8]{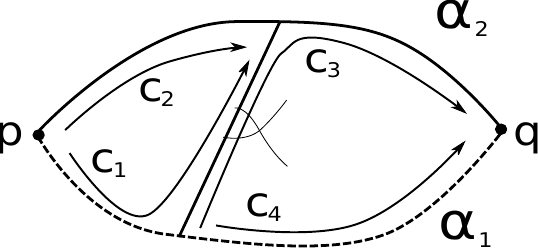} 
    \caption{Construction in proof of Lemma \ref{lemma:4}.}
  \end{figure}

Note that $c_1$ and $c_2$ form a bigon that has no crossing in its interior, so we apply the last lemma to $\beta_2'= (\beta_2\cup c_1 )\setminus c_2$ and $\beta_2$. 

On the other hand, take the bigon $D'$ formed by $c_3$ and $c_4$. $D'$ has the same crossings as $D$, and $c$ is still the leftmost crossing in $D'$. By construction of $\beta_2'$ we can apply a move of type $V3$, $M$ or $M'$ to move $a'$ to the other side of $c$. Call the obtained virtual braid diagram $\beta_1'$. Then the bigon formed between $\beta_1'$ and $\beta_1$ has $m-1$ crossings in its interior.  Applying the induction hypothesis to $\beta_1$ and $\beta_1'$ we conclude that 
$$ \beta_1 \sim_{vm} \beta_1' \sim_{vm} \beta_2' \sim_{vm} \beta_2,$$
which proves the lemma.
\end{proof}

\begin{corollary}\label{cor:4}
	Let $\beta_1$ and $\beta_2$ be two virtual braid diagrams on $n$ strands, and let $\alpha_1$ and $\alpha_2$ be two primitive arcs of $\beta_1$ and $\beta_2$, respectively, such that:
	\begin{enumerate}
		\item The extremes of $\alpha_1$ and $\alpha_2$ coincide.
		\item $\beta_1 \setminus \alpha_1$ and $\beta_2 \setminus \alpha_2$ coincide.
	\end{enumerate}
	Then $\beta_1$ and $\beta_2$ are virtually equivalent.
\end{corollary}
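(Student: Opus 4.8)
The plan is to deduce Corollary~\ref{cor:4} from Lemma~\ref{lemma:4} by factoring the passage from $\alpha_1$ to $\alpha_2$ into a sequence of elementary bigon moves, each one an instance of Lemma~\ref{lemma:4}. The only hypothesis of Lemma~\ref{lemma:4} that is not granted here is that the two arcs bound a single bigon, so the whole point is to organize the (possibly many) intersections of $\alpha_1$ and $\alpha_2$ into consecutive bigons.

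First I would fix notation. Set $\beta = \beta_1\setminus\alpha_1 = \beta_2\setminus\alpha_2$, let $p$ and $q$ be the common extremes of the two arcs, and put $t_0=\pi_1(p)$, $t_f=\pi_1(q)$. Because every primitive arc is parametrized by the projection on the first coordinate, $\alpha_1$ and $\alpha_2$ are graphs of continuous functions $[t_0,t_f]\to I$ that coincide at the two endpoints. After a small isotopy supported away from $\beta$ and fixing $p,q$ (isotopy being part of virtual equivalence) I may assume these two graphs meet transversally, hence in a finite set of points $p=r_0,r_1,\dots,r_k=q$, ordered so that $\pi_1(r_i)<\pi_1(r_{i+1})$. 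Between two consecutive points $r_i$ and $r_{i+1}$ the two subarcs are disjoint in their interiors and share their endpoints, so they bound a genuine bigon.

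Then I would introduce the hybrid arcs $\gamma_i=\alpha_2|_{[p,r_i]}*\alpha_1|_{[r_i,q]}$ for $i=0,\dots,k$, so that $\gamma_0=\alpha_1$ and $\gamma_k=\alpha_2$. Each $\gamma_i$ is again monotone in the first coordinate and passes through no regular crossing, hence is a primitive arc with extremes $p$ and $q$; declaring the new double points virtual, $\delta_i=\beta\cup\gamma_i$ is a bona fide virtual braid diagram with $\delta_0=\beta_1$ and $\delta_k=\beta_2$. The arcs $\gamma_i$ and $\gamma_{i+1}$ agree on $[p,r_i]$ (both follow $\alpha_2$) and on $[r_{i+1},q]$ (both follow $\alpha_1$), and on $[r_i,r_{i+1}]$ they run respectively along $\alpha_1$ and along $\alpha_2$; thus $\delta_i$ and $\delta_{i+1}$ differ precisely by the two subarcs $\alpha_1|_{[r_i,r_{i+1}]}$ and $\alpha_2|_{[r_i,r_{i+1}]}$, which have coinciding extremes, bound a bigon, and have coinciding complements $\delta_i\setminus\alpha_1|_{[r_i,r_{i+1}]}=\delta_{i+1}\setminus\alpha_2|_{[r_i,r_{i+1}]}=\beta\cup\alpha_2|_{[p,r_i]}\cup\alpha_1|_{[r_{i+1},q]}$. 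Lemma~\ref{lemma:4} therefore gives $\delta_i\sim_{vm}\delta_{i+1}$ for each $i$, and chaining these relations yields $\beta_1=\delta_0\sim_{vm}\cdots\sim_{vm}\delta_k=\beta_2$, which is the claim.

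The routine parts are harmless; the step that needs genuine care, and which I expect to be the main obstacle, is the general-position reduction at the start: one must justify that a single isotopy fixing the endpoints and disjoint from $\beta$ makes $\alpha_1\cap\alpha_2$ finite and transversal without destroying the monotonicity (so that the pieces remain primitive arcs) and without the arcs developing an overlap on a subinterval, which would spoil the consecutive-bigon picture. Once this is settled, verifying that each $\gamma_i$ is a legitimate primitive arc and that the complements match is straightforward bookkeeping.
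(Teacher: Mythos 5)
Your proposal is correct and follows essentially the same route as the paper: the paper's proof also puts $\alpha_1$ and $\alpha_2$ in general position so that they intersect transversally in finitely many points, observes that they then form a finite number of bigons, and applies Lemma~\ref{lemma:4} to each one in turn. Your explicit construction of the hybrid arcs $\gamma_i$ simply spells out what ``apply the previous lemma to each bigon'' means, which the paper leaves implicit.
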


\begin{proof}
	Without loss of generality we can suppose that $\alpha_1$ intersects transversally $\alpha_2$ in a finite number of points. In this case they form a finite number of bigons. Apply the previous lemma to each one. 
\end{proof}

Now we are able to complete the proof of Theorem \ref{thm:1}. We have already proved (1) in Lemma \ref{lemma:0}. In Lemma \ref{lemma:01} we have shown that if $\beta \sim_{vm} \beta'$ then $G(\beta)=G(\beta')$.  It remains to prove that if $\beta, \beta' \in VBD_n$ are so that $G(\beta) = G(\beta')$, then $\beta \sim_{vm} \beta'$. Set $g=G(\beta)=G(\beta')$.


Let $\tilde{R}$ be a total order of $R(G)$, compatible with $P(G)$. By Lemma \ref{lemma:2} there exist two virtual braid diagrams, $\alpha$ and $\alpha'$, virtually equivalent to $\beta$ and $\beta'$ respectively and such that $D(\alpha)=\tilde{R}=D(\alpha')$. As $D(\alpha)=D(\alpha')$ we can suppose that the regular crossings of $\alpha$ and $\alpha'$ coincide (if not, move them by an isotopy to make them coincide). In this case $\alpha$ and $\alpha'$ differ by joining arcs. 

Suppose $\alpha$ has $m$ joining arcs. As the regular crossings of $\alpha$ and $\alpha'$ coincide, we can suppose that the corresponding joining arcs of $\alpha$ and $\alpha'$ begin and end at the same points. Apply Corollary \ref{cor:4} $m$ times, in order to make that each of the corresponding joining arcs coincide. We conclude that $\alpha$ is virtually equivalent to $\alpha'$ and thus $\beta$ and $\beta'$. 

\subsection{Virtual braids as Gauss diagrams}

The aim of this section is to establish a bijective correspondence between virtual braids and certain equivalence classes of braid-Gauss diagrams. We also give the group structure on the set of virtual braids in terms of Gauss diagrams, and use this to prove a presentation of the pure virtual braid group. 

\begin{definition}
Let $g$ and $g'$ be two Gauss diagrams. A {\it Gauss embedding} is an embedding $\varphi: g'\rightarrow g$ which send each interval of $g'$ into a subinterval of $g$, and which sends each arrow of $g'$ to an arrow of $g$ respecting the orientation and the sign. Note that there is no condition on the permutations associated to $g'$ and $g$ in the above definition. We shall say that $g'$ is embedded in $g$ if a Gauss embedding of $g'$ into $g$ is given. 
\end{definition}

Let $g'$ be a Gauss diagram of $n$ strands, so that it is embedded in $g$ by sending the interval $i$ to a subinterval of the interval $k_i$ of $g$, we say that the embedding is of type $(k_1,\dots,k_n)$. 

Consider the three Gauss diagrams presented in Figure \ref{fig:GdEmbed}. Note that $g_1$ is embedded in $g_2$ by an embedding of type $(2,1)$, and $g_3$ is embedded in $g_2$ by an embedding of type $(1,2,3)$.
\begin{figure}[h]\centering
	    \includegraphics[scale=0.7]{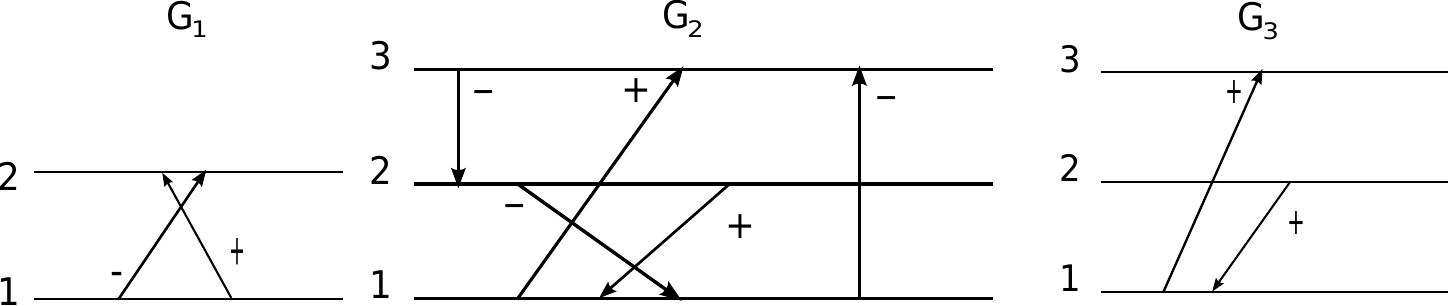} 
    \caption{$G_1$ is $(1,2)-$embedded in $G_2$, and $G_3$ is $(1,2,3)-$embedded in $G_2$.}
    \label{fig:GdEmbed}
  \end{figure}

By performing an {\it $\Omega 3$ move} on a braid Gauss diagram $g$, we mean choosing an embedding in $g$ of the braid Gauss diagram depicted on the left hand side of Figure \ref{fig:O3} (or on the right hand side of Figure \ref{fig:O3}), and replacing it by the braid Gauss diagram depicted on the right hand side of Figure \ref{fig:O3} (resp. on the left hand side of Figure \ref{fig:O3}).

\begin{figure}[h]\centering
	    \includegraphics[scale=0.8]{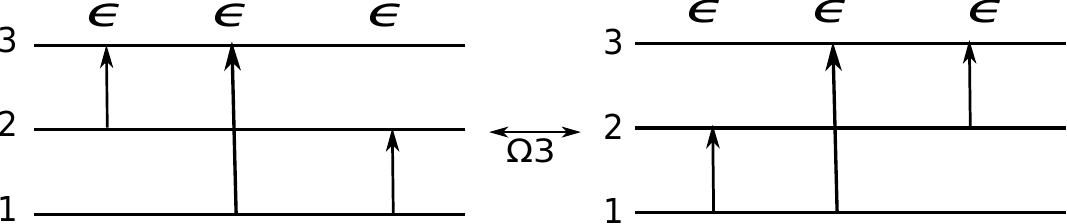} 
    \caption{$\Omega 3$ move on Gauss diagrams, with $\epsilon \in \{\pm 1\}$.}
    \label{fig:O3}
  \end{figure}

Let $g$ be a Gauss diagram with $n$ strands and $i,j,k\in \{1,\dots,n\}$ with $i<j<k$. The six different types of embeddings of the Gauss diagram in Figure \ref{fig:O3} in $g$ are illustrated in Figures \ref{fig:O3a}, \ref{fig:O3b} and \ref{fig:O3c}. According to the type of embedding the $\Omega 3$ move is called $\Omega 3$ move of type $(k_1,k_2,k_3)$.

\begin{figure}[h]\centering
	    \includegraphics[scale=0.55]{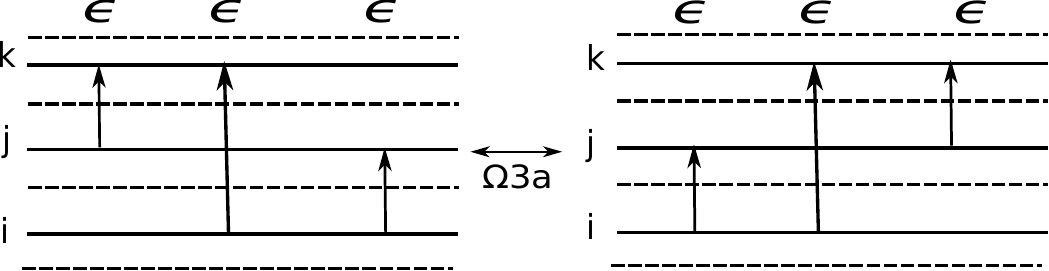} \quad
  	    \includegraphics[scale=0.55]{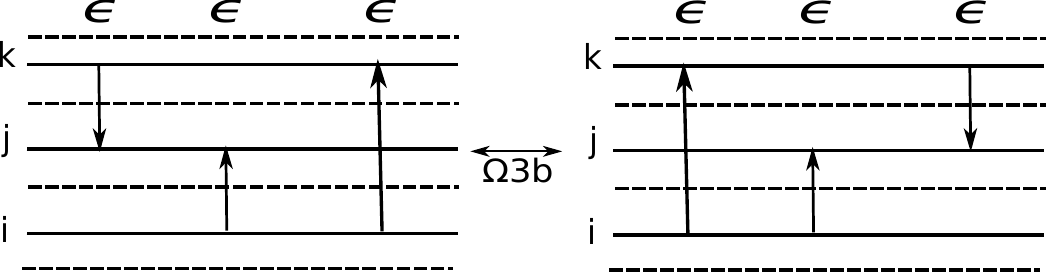} 
    \caption{$\Omega 3$ moves of type $(i,j,k)$ and $(i,k,j)$.}
    \label{fig:O3a}
  \end{figure}

\vspace{-8pt}
\begin{figure}\centering
	    \includegraphics[scale=0.55]{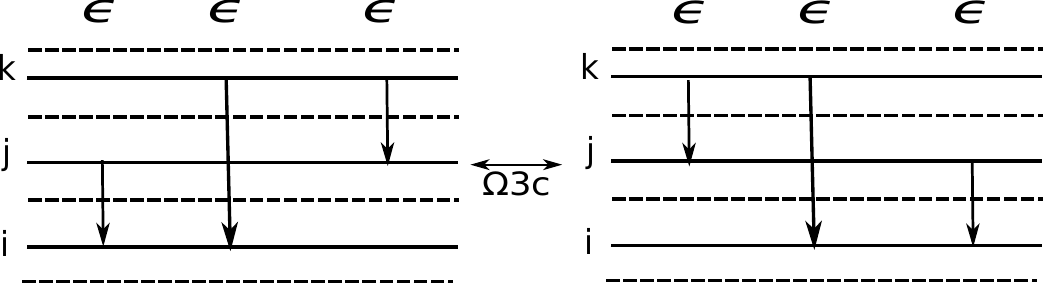} \quad
  	    \includegraphics[scale=0.55]{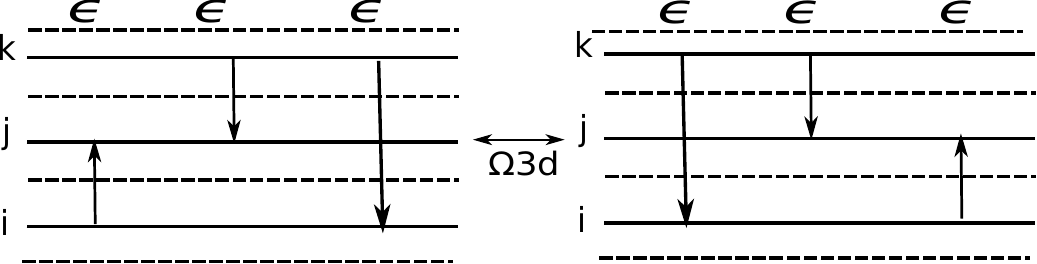} 
    \caption{$\Omega 3$ moves of type $(j,i,k)$ and $(j,k,i)$.}
    \label{fig:O3b}
  \end{figure}

\vspace{-8pt}
\begin{figure}\centering
	    \includegraphics[scale=0.55]{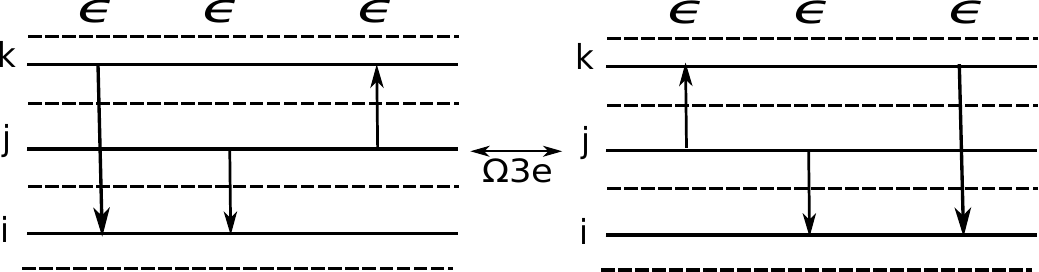} \quad
  	    \includegraphics[scale=0.55]{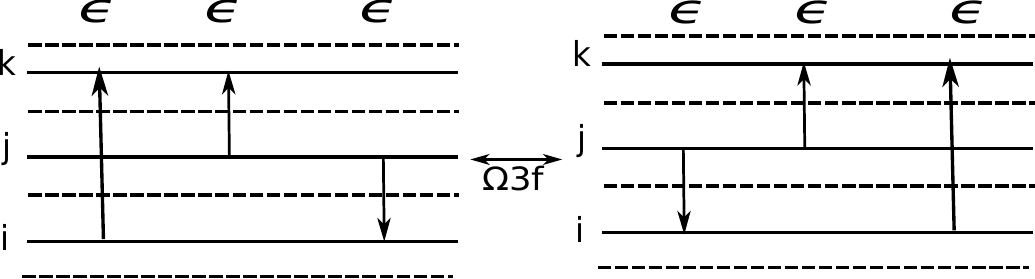} 
    \caption{$\Omega 3$ moves of type $(k,i,j)$ and $(k,j,i)$.}
\label{fig:O3c}
  \end{figure}

Similarly, by performing an {\it $\Omega 2$ move} on a braid Gauss diagram $g$, we mean choosing an embedding in $g$ of the braid Gauss diagram depicted on the left hand side of Figure \ref{fig:O2} (or on the right hand side of Figure \ref{fig:O2}), and replacing it by the braid Gauss diagram depicted on the right hand side of Figure \ref{fig:O2} (resp. on the left hand side of Figure \ref{fig:O2}). 

\begin{figure}[h]\centering
	    \includegraphics[scale=0.8]{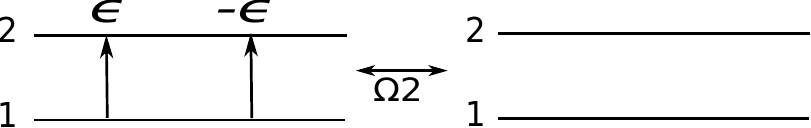} 
    \caption{$\Omega 2$ move on Gauss diagrams.}
    \label{fig:O2}
  \end{figure}

In this case there are only two types of embeddings. They are illustrated in Figure \ref{fig:O2a}.
\begin{figure}[h]\centering
	    \includegraphics[scale=0.7]{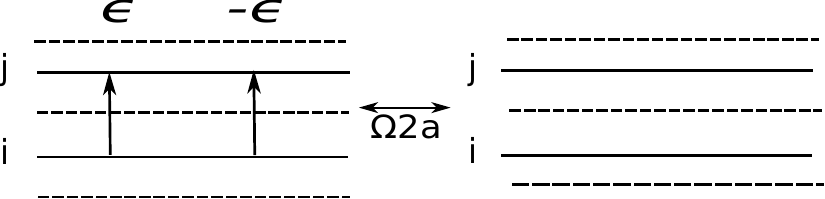} \quad
	    \includegraphics[scale=0.7]{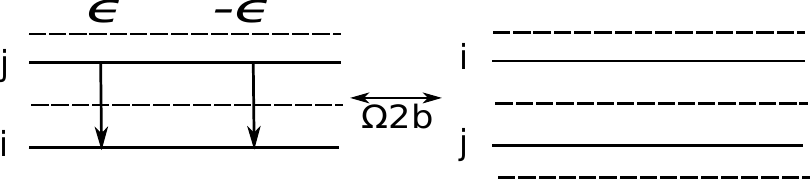} 
    \caption{$\Omega 2$ moves of type  $(i, j)$ and $(j, i)$.}
    \label{fig:O2a}
  \end{figure}

\begin{definition}
 The equivalence relation generated by the $\Omega 2$ and the $\Omega 3$ moves in the set of braid Gauss diagrams is called \textit{Reidemeister equivalence}. The set of equivalence classes of braid Gauss diagrams is denoted by $bG_n$.
\end{definition}

\begin{proposition}\label{prop:bGVb}
  There is a bijective correspondence between $bG_n$ and $VB_n$. 
\end{proposition}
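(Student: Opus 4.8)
The plan is to construct the bijection $\Phi: bG_n \to VB_n$ using the map $\beta \mapsto G(\beta)$ from Theorem \ref{thm:1} as the essential building block, and then to verify that $\Omega$-equivalence of braid Gauss diagrams corresponds exactly to virtual Reidemeister equivalence of virtual braid diagrams. By Theorem \ref{thm:1}(1), every braid Gauss diagram $g$ is realized by some $\beta \in VBD_n$, and by Theorem \ref{thm:1}(2) any two such realizations are virtually equivalent (related by $V$- and $M$-moves only). Thus sending the class $[g] \in bG_n$ to the virtual braid $[\beta] \in VB_n$ is well defined once we check it respects $\Omega$-moves; conversely, sending $[\beta] \in VB_n$ to $[G(\beta)] \in bG_n$ is the candidate inverse, provided $G$ descends to $\Omega$-equivalence classes. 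The crux is therefore to prove the precise statement: for $\beta, \beta' \in VBD_n$, one has $\beta \sim \beta'$ (virtually Reidemeister equivalent) if and only if $G(\beta)$ and $G(\beta')$ are Reidemeister equivalent as braid Gauss diagrams, i.e. related by $\Omega 2$ and $\Omega 3$ moves.

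First I would establish the forward direction, that each generating move on virtual braid diagrams induces either no change or an $\Omega$-move on the associated braid Gauss diagram. The $V2$, $V3$, $M$ and $M'$ moves leave $G(\beta)$ unchanged; this is precisely Lemma \ref{lemma:01}. For the Reidemeister moves, I would check case by case that an $R2a$ or $R2b$ move on $\beta$ corresponds to an $\Omega 2$ move on $G(\beta)$ (the two regular crossings created or destroyed yield a cancelling pair of oppositely-oriented same-sign arrows between two strands, matching Figure \ref{fig:O2}), and that an $R3$ move corresponds to an $\Omega 3$ move of the appropriate type $(k_1,k_2,k_3)$ among the six listed in Figures \ref{fig:O3a}, \ref{fig:O3b}, \ref{fig:O3c}. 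Since $\beta \sim \beta'$ means $\beta$ and $\beta'$ are connected by a finite chain of such moves and isotopies, composing these correspondences shows $G(\beta)$ and $G(\beta')$ are $\Omega$-equivalent, so the map $[\beta] \mapsto [G(\beta)]$ is well defined.

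For the reverse direction, I would show that an $\Omega 2$ or $\Omega 3$ move on a braid Gauss diagram $g$ is realized, at the level of virtual braid diagrams, by an $R2$ or $R3$ move (up to virtual and mixed moves). Given an $\Omega$-move taking $g$ to $g'$, pick realizations $\beta$ of $g$ and $\beta'$ of $g'$; I want to produce realizations that differ by a single Reidemeister move. The key is that an $\Omega$-move is local: it alters the arrows in a small window of the intervals while leaving the rest of $g$ fixed. Using Lemma \ref{lemma:2}, I would arrange the crossing order $D(\beta)$ so that the two (for $\Omega 2$) or three (for $\Omega 3$) regular crossings involved in the move are adjacent in the total order with no regular crossing between them, which is possible because the partial order $P(g)$ does not constrain them relative to each other in the configuration of Figure \ref{fig:O2} or \ref{fig:O3}. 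Then, after this reordering, the relevant strands form exactly the local picture of an $R2$ or $R3$ move, and applying that Reidemeister move produces a virtual braid diagram whose Gauss diagram is $g'$. By Theorem \ref{thm:1}(2) this diagram is virtually equivalent to our chosen $\beta'$, so $\beta \sim \beta'$. This shows $[g] \mapsto [\beta]$ is well defined.

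Finally I would assemble the two maps into mutually inverse bijections: starting from $[g]$, realizing it as $[\beta]$, and taking $[G(\beta)]$ returns $[g]$ because $G(\beta) = g$ by construction; starting from $[\beta]$, taking $[G(\beta)]$ and realizing it returns $[\beta]$ by Theorem \ref{thm:1}(2). The main obstacle is the reverse direction, specifically the reordering step: I must be sure that the crossings participating in an $\Omega$-move can always be brought into adjacency compatibly with $P(g)$ so that the local realization genuinely looks like a standard Reidemeister tangle, and that moving them does not force an unwanted regular crossing into the window. This is exactly where Corollary \ref{cor:3} and Lemma \ref{lemma:2} do the work, allowing me to commute unrelated crossings past each other using only virtual and mixed moves, so the only genuine change is the single Reidemeister move encoding the $\Omega$-move.
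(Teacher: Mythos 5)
Your proposal follows essentially the same route as the paper: reduce everything to Theorem \ref{thm:1} for the virtual/mixed part, then match Reidemeister moves with $\Omega$ moves in both directions by localizing the participating crossings in a small time window (the paper does this localization by isotopy and choice of a suitable realization, where you invoke Lemma \ref{lemma:2} and Corollary \ref{cor:3}; the idea is the same). One small slip in your parenthetical: the two arrows produced or destroyed by an $R2$ move carry \emph{opposite} signs --- it is their orientations that agree or disagree according to $R2a$ versus $R2b$ --- not the same sign as you state.
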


\begin{proof}
  By Theorem \ref{thm:1} we know that there is a bijective correspondence between the set of virtually equivalent virtual braid diagrams and the braid Gauss diagrams. Therefore we need to prove that if two virtual braid diagrams are related by a Reidemeister move then their braid Gauss diagrams are Reidemeister equivalent, and that if two braid Gauss diagrams are related by an $\Omega 2$ or an $\Omega 3$ move then their virtual braid diagrams are virtually Reidemeister equivalent.
  
  Let $\beta$ and $\beta'$ be two virtual braid diagrams that differ by a Reidemeister move. Suppose that they are related by a $R3$ move, and that the strands involved in the move are $a$, $b$, and $c$, with $a,b,c \in \{1,\dots,n\}$. Then, up to isotopy we can deform the diagrams so that they coincide outside the subinterval  $I_0:=[t_0,t_f]\subset I$,  and in $I_0$ there are only the crossings involved in the $R3$ move. In $I_0$ the diagrams look as in Figure \ref{fig:R3label}. Thus, their braid-Gauss diagrams coincide outside $I_0$ and in $I_0$ they differ by an $\Omega 3$ move of type $(a,b,c)$. The case $R2$ is proved in the same way. 

\begin{figure}[h]\centering
	    \includegraphics[scale=0.6]{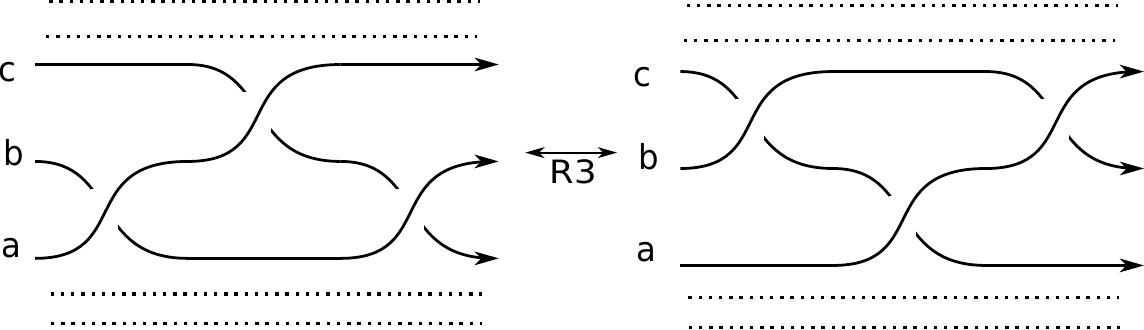} \\
    \caption{A labelled $R3$ move.}
    \label{fig:R3label}
  \end{figure}
 
   Now, let $g$ and $g'$ be two braid Gauss diagrams and let $a,b,c\in \{1,\dots,n\}$ be pairwise different. Suppose that $g$ and $g'$ are related by an $\Omega 3$ move of type $(a,b,c)$. There exists a subinterval $I_0=[t_0,t_f]\subset I$, that contains only the three arrows involved in the $\Omega 3$ move. There exists a virtual braid diagram $\beta$, representing $g$, that in the subinterval $I_0$ it looks as the left hand side (or the right hand side) of Figure \ref{fig:R3label}. By performing an $R3$ move on $\beta|_{I_0}$, we obtain a virtual braid diagram $\beta'$. Their braid Gauss diagrams coincides outside $I_0$ and in $I_0$ the differ by an $\Omega 3$ move of type $(a,b,c)$, i.e. $G(\beta')= g'$.  
   

\end{proof}

\subsection{Presentation of $PV_n$.}

 Recall that $VB_n$ has a group structure, with the product given by the concatenation of the diagrams (Remark \ref{rmk:VBn}). By Proposition \ref{prop:bGVb}, $bG_n$ has a group structure induced by the one on $VB_n$. 
 
 A presentation of the {\it pure virtual braids} was given by Bardakov \cite{Ba2004}. We present an alternative proof by means of the braid Gauss diagrams. 

 Recall that the symmetric group, $S_n$, has the next presentation:
 \begin{itemize}
 	\item Generators: $t_1,\dots,t_{n-1}$.
 	\item Relations:\[\begin{split}
				t_{i}t_{i+1}t_{i}=t_{i+1}t_{i}t_{i+1} 	&\quad \text{ for } 1\leq i \leq n-2 \\
				t_{i}t_{j}=t_{j}t_{i}		 			&\quad \text{ for } |i-j|\geq 2. \\
				t_{i}^2=1		 						&\quad \text{ for } 1\leq i \leq 2.
			 \end{split}\]
 \end{itemize}

 From the presentation of $VB_n$ (Remark \ref{rmk:VBn}), there exists an epimorphism $\theta_P : VB_n \rightarrow S_n$, given by 
$$\theta_P(\tau_i) = t_i = \theta_P(\sigma_i) \quad \text{for } 1\leq i \leq n-1. $$

 The kernel of $\theta_P$ is called the \textit{pure virtual braid group} and is denoted by $PV_n$. The elements of this group correspond to the virtual braids diagrams whose strands begin and end in the same marked point, i.e. the permutation associated to its braid Gauss diagram is the identity. 

 On the other hand, a braid Gauss diagram is composed by the next elements:
\begin{enumerate}
  \item A finite ordered set of $n$ intervals, say $I_1\sqcup I_2 \sqcup \dots \sqcup I_n$.
  \item A finite set of arrows connecting the different intervals, so that to each arrow corresponds a different time.
  \item A function assigning a sign, $\{ \pm 1 \}$, to each arrow. 
  \item A permutation, $\sigma \in S_n$, labelling the endpoint of each interval.
\end{enumerate}

  Denote by $X_{i,j}^{\epsilon}$ the arrow from the interval $i$ to the interval $j$ with sign $\epsilon\in \{\pm 1\}$. Let $$X= \{X_{i,j}^{\epsilon} \; | \; 1\leq i\neq j \leq n \; , \; \epsilon\in \{\pm 1\} \}$$ and denote by $X^*$ the set of all words in $X$ union the empty word, denoted by $e$.

 Given a braid Gauss diagram, $g$, its arrows have a natural order induced by the parametrization of the intervals. Let $W\in X^*$ be the word given by the concatenation of the arrows in $g$, according to the order in which they appear, and $\sigma\in S_n$ its associated permutation.  Thus any braid Gauss diagram can be expressed as $g= (W,\sigma)$. We denote $\bar{e}:= (e,Id_{S_n})$.
 
\begin{proposition}(Bardakov \cite{Ba2004})\label{prop:PVn}
	The group $PV_n$ has the following presentation:
	\begin{itemize}
		\item Generators: $A_{i,j}$ with $1\leq i \neq j \leq n$.
		\item Relations: \[\begin{split}
				A_{i,j}A_{i,k}A_{j,k} = A_{j,k}A_{i,k}A_{i,j} &\quad \text{for }i,j,k \text{  distinct}. \\
				A_{i,j}A_{k,l} = A_{k,l}A_{i,j} &\quad \text{for }i,j,k,l \text{  distinct}. \\
			 \end{split}\]
	\end{itemize}
\end{proposition}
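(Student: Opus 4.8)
The plan is to realize $PV_n$ directly as a group of words in the arrow-symbols $X_{i,j}^{\epsilon}$ and then identify this description with the stated presentation. The first observation is that every $\Omega 2$ and $\Omega 3$ move leaves the permutation of a braid Gauss diagram untouched, so the permutation is a well-defined invariant of a class in $bG_n$, and under the bijection $bG_n\cong VB_n$ of Proposition \ref{prop:bGVb} it computes $\theta_P$. Hence $PV_n=\ker\theta_P$ is exactly the set of $\Omega$-classes of \emph{pure} braid Gauss diagrams, those of the form $g=(W,\Id)$ with $W\in X^*$, and concatenation of pure diagrams is just $(W,\Id)\cdot(W',\Id)=(WW',\Id)$ because the identity permutation does no relabelling of the arrows. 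I set $A_{i,j}:=[(X_{i,j}^{+1},\Id)]$ and let $\Pi$ denote the group presented in the statement. I would then build mutually inverse homomorphisms $\phi\colon\Pi\to PV_n$ and $\psi\colon PV_n\to\Pi$.

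To get $\phi$ I first check generation and the relations. Every pure diagram is the product of its single-arrow factors, and an $\Omega 2$ move gives $(X_{i,j}^{+1}X_{i,j}^{-1},\Id)=\bar e$, so $[(X_{i,j}^{-1},\Id)]=A_{i,j}^{-1}$ and the $A_{i,j}$ generate $PV_n$. For the commutation relation, when $i,j,k,l$ are distinct the diagrams $(X_{i,j}^{+1}X_{k,l}^{+1},\Id)$ and $(X_{k,l}^{+1}X_{i,j}^{+1},\Id)$ are \emph{equal} as braid Gauss diagrams: their endpoints lie on four distinct intervals each carrying a single mark, so an orientation-preserving reparametrization of each interval realizes either admissible time-order; hence $A_{i,j}A_{k,l}=A_{k,l}A_{i,j}$. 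The triangle relation is read off the $\Omega 3$ move relating $(X_{i,j}^{+1}X_{i,k}^{+1}X_{j,k}^{+1},\Id)$ and $(X_{j,k}^{+1}X_{i,k}^{+1}X_{i,j}^{+1},\Id)$ in Figures \ref{fig:O3}--\ref{fig:O3c}, which is precisely $A_{i,j}A_{i,k}A_{j,k}=A_{j,k}A_{i,k}A_{i,j}$. Both families holding in $PV_n$, the assignment $A_{i,j}\mapsto[(X_{i,j}^{+1},\Id)]$ descends to a surjective homomorphism $\phi$.

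For the converse I would define $\psi$ by reading a word off a pure diagram, $X_{i,j}^{+1}\mapsto A_{i,j}$ and $X_{i,j}^{-1}\mapsto A_{i,j}^{-1}$, and the work is to prove it well defined. The admissible readings of a fixed $g$ are exactly the linear extensions of the partial order $P(g)$ of Definition \ref{rmk:GaussOrder}; two arrows incomparable in $P(g)$ must connect four distinct intervals, since two arrows sharing an interval have linearly ordered marks there and are forced comparable. As any two linear extensions differ by adjacent transpositions of incomparable (hence disjoint-support) arrows, each swap is absorbed by a commutation relation of $\Pi$, so the reading is independent of the chosen order. Independence under $\Omega 2$ is immediate ($A_{i,j}A_{i,j}^{-1}=e$), and independence under $\Omega 3$ is the triangle relation. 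Granting these, $\psi$ is a homomorphism fixing every $A_{i,j}$ under $\psi\circ\phi$, so $\phi$ is injective; with surjectivity this gives $\Pi\cong PV_n$.

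The hard part will be the $\Omega 3$ verification inside $\psi$. The move occurs in the six embedding types of Figures \ref{fig:O3a}--\ref{fig:O3c} and in the two sign variants $\epsilon=\pm1$ of Figure \ref{fig:O3}, giving a priori twelve word-relations among $X_{i,j}^{\pm1}$, $X_{i,k}^{\pm1}$, $X_{j,k}^{\pm1}$, and crucially none of these three arrows commute with one another since they pairwise share an interval. I would show each such relation follows from the single stated relation read \emph{over all ordered distinct triples}, together with its inverse: permuting the roles of $i,j,k$ accounts for the embedding type, reflecting the move accounts for inverting the relation, and the identity $[(X_{i,j}^{-1},\Id)]=A_{i,j}^{-1}$ converts the sign $\epsilon$ into inverses of generators. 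This is a finite but delicate piece of bookkeeping against the figures; once it is done, $\psi$ is well defined and the proof is complete.
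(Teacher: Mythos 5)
Your proposal is correct and follows essentially the same route as the paper: both identify $PV_n$ with the $\Omega$-equivalence classes of words in $X^*$ via the Gauss-diagram bijection of Proposition \ref{prop:bGVb}, and then exhibit mutually inverse homomorphisms matching the word moves (commutation of arrows with disjoint support, $\Omega 2$ cancellation, and the uniform-sign triangle relation taken over all ordered distinct triples) with the stated relations. You are somewhat more explicit than the paper about why the reading map is well defined (linear extensions of $P(g)$ differing by swaps of incomparable, hence disjoint-support, arrows) and about the sign and embedding-type bookkeeping for $\Omega 3$, but the underlying argument is the same.
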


\begin{proof}
	Given a pure virtual braid diagram $\beta$, its braid Gauss diagram is given by $G(\beta) = (W ,Id_{S_n})$. Thus any pure virtual braid diagram may be expressed as an element in $X^*$. 
	
	Recall that, as elements of $bG_n$, the braid Gauss diagrams are related by three different moves (and its inverses) on the subwords of any word in $X^*$: 
	\begin{enumerate}
		\item Reparametrization: 
		   $$X_{i,j}^{\epsilon_
1} X_{k,l}^{\epsilon_2}= X_{k,l}^{\epsilon_2}X_{i,j}^{\epsilon_1} \text{ for } i,j,k,l \text{ distinct and } \epsilon_1,\epsilon_2 \in \{\pm 1\}.$$ 
		\item The $\Omega 2$ move: 
		$$X_{i,j}^{\epsilon}X_{i,j}^{- \epsilon}= e \text{ for } i,j \text{ distinct and }\epsilon\in\{\pm 1\}.$$
		\item The $\Omega 3$ move:
		$$X_{i,j}^{\epsilon}X_{i,k}^{\epsilon}X_{j,k}^{\epsilon} = X_{j,k}^{\epsilon}X_{i,k}^{\epsilon}X_{i,j}^{\epsilon} \text{ for } i,j,k \text{ distinct and  } \epsilon\in\{\pm 1\}.$$	
	\end{enumerate}

	Denote by $PG_n$ the set of equivalence classes of $X^*$. Note that $PG_n$ has the structure of group with the product defined as the concatenation of the words. On the other hand $G: PV_n \rightarrow PG_n$ is an homomorphism,  i.e. $G(\beta_1 \beta_2) = G(\beta_1)G(\beta_2)$.  By Proposition \ref{prop:bGVb}, $G$ is a bijection. Consequently it is an isomorphism. 

	Let $\Gamma$ be the group with presentation as stated in the proposition. Let $\Psi : \Gamma \rightarrow PG_n$ be given by $$\Psi (A_{i,j}) = X_{i,j},$$ and let $\Phi : PG_n \rightarrow \Gamma$ be given by $$\Phi (X_{i,j}^{\epsilon}) = A_{i,j}^{\epsilon}.$$ 
	
	Note that $\Phi$ and $\Psi$ are well-defined homomorphisms and furthermore $\Psi\circ \Phi = Id_{PG_n}$ and $\Phi\circ \Psi = Id_{\Gamma}$.  Consequently $PV_n$ has the presentation stated in the proposition. 
	\end{proof}

%

\section{Abstract braids}

The aim of this section is to establish a topological representation of virtual braids.

\begin{definition}
	A {\it abstract braid diagram on $n$ strands} is a quadruple $\bar{\beta}=(S,f,\beta,\epsilon)$, such that:
	\begin{enumerate}
		\item $S$ is a connected, compact and oriented surface.		
		\item The boundary of $S$ has only two connected components, i.e. $\partial S = C_0 \sqcup C_1$, with $C_0 \approx S^{1} \approx C_1$. They are called \textit{distinguished boundary components}. 
		\item Each boundary component of $S$ has $n$ marked points, say $K_0 = \{a_1,\dots,a_n \} \subset C_0$ and $K_1 = \{b_1,\dots,b_n \} \subset C_1$. Such that: 
			\begin{enumerate}
				\item The elements of $K_0$ and $K_1$ are linearly ordered.
				\item Let $\kappa_0: S^{1} \rightarrow C_0$ and $\kappa_1 : S^{1} \rightarrow C_1$ be parametrizations of $C_0$ and $C_1$ compatible with the orientation of $S$. Up to isotopy we can put $a_k= \kappa_0(e^{\frac{2\pi i}{k}})$ and $b_k = \kappa_1(e^{-\frac{2\pi i}{k}})$ for $k\in \{1, \dots, n\}$.  
			\end{enumerate} 
		\item $f$ is a smooth function, $f: S\rightarrow I$ such that $f^{-1}(\{ 0 \}) = C_0$ and $f^{-1}(\{ 1 \}) = C_1$.
		\item $\beta$ is an $n$-tuple of curves $\beta = (\beta_1,\dots,\beta_n)$ with
		   \begin{enumerate}
		     \item For $k=1,\dots, n$, $\beta_k : I \rightarrow S$.
		     \item For $k=1,\dots, n$, $\beta_k(0)=a_k$.
		     \item There exists $\sigma \in S_n$ such that $$\beta_k(1)= b_{\sigma(k)},$$ for all $k\in \{1,\dots, n\}$.
		     \item For $k=1,\dots, n$ and $t\in I$, $f\circ \beta_k(t)= t$.
		     \item The $n$-tuple of curves $\beta$ is in general position, i.e. there are only transversal double points, called crossings.
		   \end{enumerate}
		\item  Similarly to Defintion \ref{def:vbd}, denote by $R(\beta)$ the set of crossings of $\beta$. Then $\epsilon$ is a function,  $$\epsilon : R(\beta) \rightarrow \{\pm 1\}.$$
	\end{enumerate}
\end{definition}

From now on we fix $n\in \N$ and we say abstract braid diagram instead of abstract braid diagram on $n$ strands.

 \begin{figure}[h]\centering
    \includegraphics[scale=0.7]{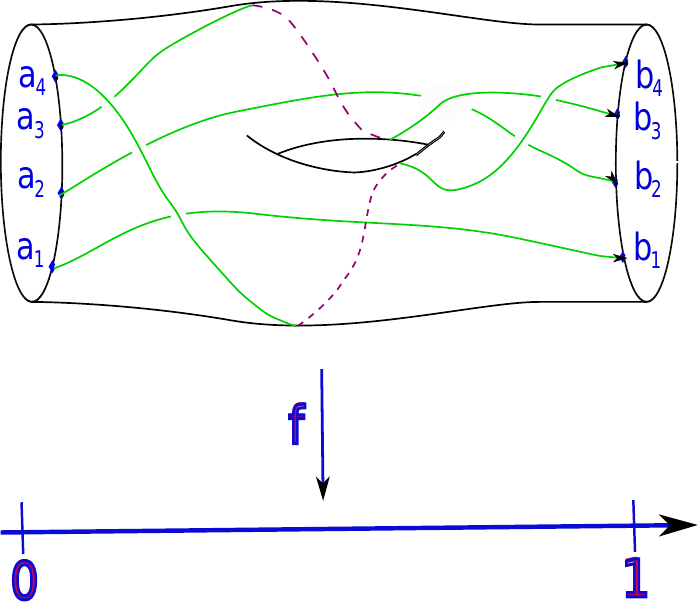} 
    \caption{An abstract braid diagram on four strands.}
 \end{figure}

\begin{definition}
  An {\it isotopy} of abstract braid diagrams is a family of abstract braid diagrams $G = \{ \bar{\beta}^{s}= (S,f^{s},\beta^{s},\epsilon^{s}) \}_{s\in I}$, such that:
  \begin{enumerate}
    \item For all $s\in I$, $K_0^s = K_0^0$ and $K_1^s = K_1^0$.
    \item For all $k\in \{1,\dots, n\}$, $H_k$ is continuous, where:
     \[ \begin{split}
         H_k : & I \times I \rightarrow S \\
               & (s,t) \mapsto \beta_k^{s}(t).
         \end{split}\]
    \item The function $H$ is smooth, where:
     \[ \begin{split}
         H : & I \times S \rightarrow I \\
               & (s,x) \mapsto f^{s}(x).
         \end{split}\]
    \item The function $\epsilon^{s} : R(\beta^{s}) \rightarrow \{\pm 1\}$ remains invariant, i.e. $\epsilon^{s}= \epsilon^0$, for all $s\in I$.
  \end{enumerate}
  We say that $\bar{\beta}^{0}$ and $\bar{\beta}^{1}$ are isotopic and we denote it by $G:\bar{\beta}^{0}\simeq \bar{\beta}^{1}$. 
\end{definition}

\begin{remark}
 The isotopy relation is an equivalence relation on the set of abstract braid diagrams. 
\end{remark}

\begin{definition}
 Let $\bar{\beta}=(S,f,\bar{\beta},\epsilon)$ and $\bar{\beta}'=(S',f',\bar{\beta}',\epsilon')$ be two abstract braid diagrams. We say that $\bar{\beta}$ and $\bar{\beta}'$ are \textit{compatible} if there exists a diffeomorphism $\varphi: (S,\partial S) \rightarrow (S',\partial S')$, such that $\varphi_* \bar{\beta} = (S',f\circ \varphi^{-1}, \varphi\circ \beta, \epsilon)$ is isotopy equivalent to $\bar{\beta}'$. We denote it by $\bar{\beta}\approx \bar{\beta}'$.
\end{definition}

\begin{remark}
The compatibility relation is an equivalence relation on the set of abstract braid diagrams, and the isotopy equivalence is included in the compatibility relation. We denote by $ABD_n$ the set of compatibility classes of abstract braid diagrams on $n$ strands.  
\end{remark}

%
%
%

\begin{definition}
 Given $\bar{\beta^{0}}=(S_0,f_0,\beta^{0},\epsilon_0)$ and $\bar{\beta^{1}}=(S_1,f_1,\beta^{1},\epsilon_1)$. We say that they are related by a \textit{stability move} if there exist:
 \begin{enumerate}
    \item Two disjoint embedded discs, $D_0$ and $D_1$, in $S_0\setminus \beta^0$.
    \item An embedding $\varphi: (S_0' = S_0 \setminus (D_0\cup D_1),\partial S_0) \rightarrow (S_1,\partial S_1)$, such that $S_1 \setminus \varphi(S_0') \approx S^1 \times I$.
    \item A smooth function $F:S_1 \rightarrow I$, such that:
     \begin{enumerate}
     	\item $f_0|_{S_0'} = F\circ \varphi$.
     	\item The quadruple $(S_1,F,\varphi\circ \beta^0,\epsilon_0)$ is an abstract braid.
     	\item $(S_1,F,\varphi\circ \beta^0,\epsilon_0) \approx \bar{\beta^1}$.
     \end{enumerate} 
 \end{enumerate} 
\end{definition}

\begin{definition}
 Given $\bar{\beta^{0}}=(S_0,f_0,\beta^{0},\epsilon_0)$ and $\bar{\beta^{1}}=(S_1,f_1,\beta^{1},\epsilon_1)$. We say that they are related by a \textit{destability move} or a \textit{destabilization}, if there exist:
 \begin{enumerate}
   \item An essential non-separating simple curve $C$ in $S_0\setminus \beta^0$.
   \item An embedding $\varphi : (S_0'=S_0\setminus C, \partial S_0) \rightarrow (S_1,\partial S_1)$, such that $S_1 \setminus \varphi(S_0\setminus C)$  is homeomorphic to the disjoint union of two closed discs.
   \item A smooth funcion $F:S_1 \rightarrow I$, such that:
     \begin{enumerate}
     	\item $f_0|_{S_0'} = F\circ \varphi$.
     	\item The quadruple $(S_1,F,\varphi\circ \beta^0,\epsilon_0)$ is an abstract braid.
     	\item $(S_1,F,\varphi\circ \beta^0,\epsilon_0) \approx \bar{\beta^1}$.
     \end{enumerate}  
 \end{enumerate} 
\end{definition}

 Given two abstract braid diagrams $\bar{\beta^0}$ and $\bar{\beta^1}$, if $\bar{\beta^1}$ is obtained from $\bar{\beta^0}$ from a stability move along two discs $D_0$ and $D_1$ in $S_0\setminus \beta^0$, the boundaries of $D_0$ and $D_1$ are homotopy equivalent in $S_1$. If we perform a destabilization along its homotopy class we recover $\bar{\beta^0}$, up to compatibility. 
 
 Reciprocally if $\bar{\beta^1}$ is obtained from $\bar{\beta^0}$ by a destabilization along an essential curve $C$, then we can recover $\bar{\beta^0}$, up to compatibility, with a stabilization along the two capped discs in $S_0\setminus C$, see Figure \ref{fig:stabComp}. 

 \begin{figure}[h]\centering
    \includegraphics[scale=0.52]{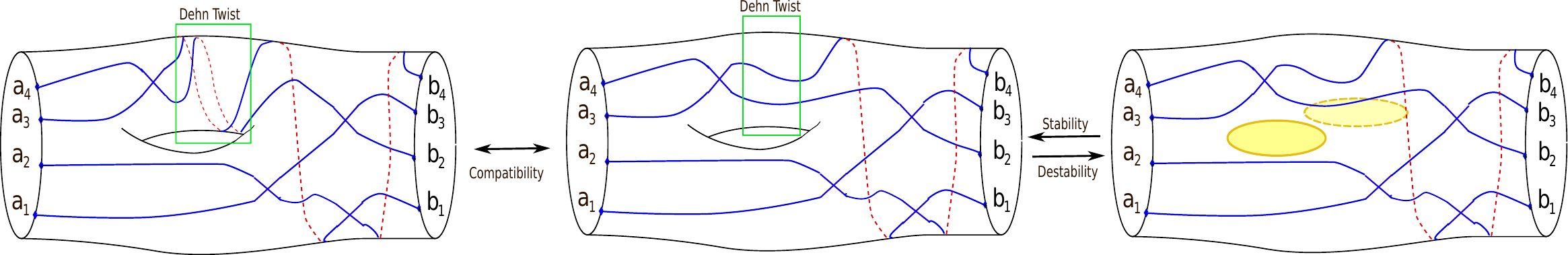} 
    \caption{Compatibility and stability equivalence.}
    \label{fig:stabComp}
 \end{figure}

\vspace{-8pt}
\begin{definition}
  The equivalence relation on the set of abstract braids generated by the stability (and destability) moves is called \textit{stability equivalence}. We denote it by $\sim_s$. 
\end{definition}
 

\begin{definition}
 Let $\bar{\beta}=(S,f,\beta,\epsilon)$ be an abstract braid, and let $C$ be an embedded simple closed curve in $S\setminus \beta$. Denote by $S_C$ the connected component of $S\setminus C$ containing $\beta$.  Let $S_C'$ be a compact, connected, oriented surface and $\varphi : S_C \rightarrow S_C'$ such that:
 \begin{enumerate}
   \item The surface $S_C'$ has only two boundary components $C_0'$ and $C_1'$. 
   \item The map $\varphi$ is an embedding such that $\varphi(C_0)=C_0'$ and $\varphi(C_1)=C_1'$.
   \item Let $k_C$ be the number of connected components of $S\setminus C$.
     \begin{enumerate}
       \item If $k_C=1$, then $S_C' \setminus \varphi(S_C)$ is homeomorphic to a disjoint union of two discs. 
       \item If $k_C=2$, then $S_C' \setminus \varphi(S_C)$ is homeomorphic to a disc. 
     \end{enumerate}
 \end{enumerate}
 Let $F_C: S_C' \rightarrow I$ be a smooth function such that $F|_{\varphi(S_C)}=f|_{S_C}$ (note that up to isotopy, this extension is unique). Then $\bar{\beta}_C=(S_C',F_C,\varphi\circ \beta, \epsilon)$ is an abstract braid.  We say that we obtain $\bar{\beta}_C$ by \textit{destabilizing} $\bar{\beta}$ along $C$, and is called a \textit{generalized destabilization}.
\end{definition}

\begin{proposition}
 Let $\bar{\beta}=(S,f,\beta,\epsilon)$ be an abstract braid, and let $C$ be an embedded simple closed curve in $S\setminus \beta$. Then $\bar{\beta}_C$ is stable equivalent to $\bar{\beta}$ by a finite number of destabilizations.
\end{proposition}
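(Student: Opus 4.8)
The plan is to compare the generalized destabilization along $C$ directly with the ordinary destabilization move, splitting according to the number $k_C$ of connected components of $S\setminus C$. First I would record the basic observation that both distinguished boundary components lie in the component $S_C$ containing $\beta$: each strand $\beta_k$ joins $a_k\in C_0$ to $b_{\sigma(k)}\in C_1$ inside $S\setminus C$, so $C_0$ and $C_1$ are connected to $\beta$ and hence belong to $S_C$. Consequently, in the separating case the other component carries no distinguished boundary, only a copy of $C$.

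If $k_C=1$, then $C$ is non-separating, hence essential, and $S_C=S\setminus C$ carries the four boundary circles $C_0$, $C_1$ and the two copies of $C$. The surface $S_C'$ is obtained by capping the two copies of $C$ with discs while $F_C$ extends $f$. Reading off the definitions, this is exactly one destabilization move performed along $C$; thus $\bar{\beta}_C\sim_s\bar{\beta}$ in a single step.

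If $k_C=2$, let $S'$ denote the component of $S\setminus C$ disjoint from $\beta$; by the observation above its only boundary component is a copy of $C$, so it is a compact connected oriented surface of some genus $g'$ with one boundary circle. I would argue by induction on $g'$. When $g'=0$ the surface $S'$ is a disc, $C$ bounds a disc in $S$, and capping recovers $S$, so $\bar{\beta}_C\approx\bar{\beta}$ and there is nothing to do. When $g'\geq 1$, choose a non-separating simple closed curve $\gamma$ in the interior of $S'$; it is automatically disjoint from $\beta$. Since $S\setminus\gamma=S_C\cup_C(S'\setminus\gamma)$ is a union of two connected surfaces glued along $C$, the curve $\gamma$ is non-separating, hence essential, in $S$ as well. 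Performing the destabilization along $\gamma$ leaves $S_C$, the strands $\beta$ and $f|_{S_C}$ untouched and replaces $S'$ by a surface of genus $g'-1$ with the same boundary circle $C$. The curve $C$ still separates the new surface, now with a non-braid side of genus $g'-1$, and its generalized destabilization coincides up to compatibility with $\bar{\beta}_C$. The inductive hypothesis then supplies $g'-1$ further destabilizations reaching $\bar{\beta}_C$, so in total $g'$ destabilizations relate $\bar{\beta}$ to $\bar{\beta}_C$.

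The routine-but-delicate point will be the bookkeeping of the function $f$: at each step one must check that the cut-and-cap surface carries a smooth $F$ with $F^{-1}(0)=C_0'$ and $F^{-1}(1)=C_1'$ extending $f$ on the unchanged part, so that every intermediate quadruple is a genuine abstract braid and the final one is compatible with $\bar{\beta}_C$. Since each capping disc sits in the interior of $S'$, away from the level sets of $0$ and $1$, such an extension exists and, as already noted in the definition of $\bar{\beta}_C$, is unique up to isotopy; this is what guarantees that the composite of the destabilizations is compatible with the generalized destabilization. The main genuine obstacle is therefore the topological reduction of the genus of $S'$ by non-separating curves together with the verification that these curves remain essential in $S$; the remainder of the argument is dictated by the definitions.
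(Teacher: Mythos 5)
Your proposal is correct and follows essentially the same route as the paper: the same case split on whether $C$ separates, and the same induction on the genus of the component of $S\setminus C$ disjoint from $\beta$, reducing it one destabilization at a time along a non-separating simple closed curve in its interior and checking that the extensions of $f$ agree up to isotopy so the end result is compatible with $\bar{\beta}_C$. Your preliminary observation that both distinguished boundary components lie in $S_C$, so the other component has only the copy of $C$ as boundary, is a useful justification that the paper leaves implicit.
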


\begin{proof}
   First note that if $S\setminus C$ has only one connected component the generalized destabilization along $C$ coincides with the definition of destabilization. Thus $\bar{\beta}\sim_s \bar{\beta}_C$ by one destabilization.

   So, we can assume that $S\setminus C$ has two connected components, one of which contains $\beta$ (we call it $S_C$) and the other is a compact connected surface with one boundary component, thus it is homeomorphic to $\Sigma_{g,1}$. We will prove the proposition by induction on $g$. 
   
   If $g=0$ then $\Sigma_{g,1}$ is a disc, thus $F_C \simeq f|_{S_C}$ and consequently $\bar{\beta}_C\approx \bar{\beta}$. 
   
   If $g=1$ then $\Sigma_{1,1}$ is a torus with one boundary component, which corresponds to the curve $C$. Let $C'$ be a closed simple essential non separating curve in $\Sigma_{1,1}$. We claim that $\bar{\beta}_C \approx \bar{\beta}_{C'}$. 
   
   Note that $\Sigma_{1,1}\setminus C'$ is homeomorphic to a pair of pants (Figure \ref{fig:pants}), whose exterior boundary is the curve $C$ and whose interior boundaries correspond to the boundaries generated by cutting $S$ along $C'$. 
      
   On the other hand consider the curve $C'$ embedded in $S$. The surface $S\setminus C'$ has one connected component and two (non distinguished) boundary components. Let $S'$ be the surface obtained from $S\setminus C'$ by capping the boundary components corresponding to $C'$.  There exist a disc, $D'$, embedded in $S'$ so that its boundary corresponds to the curve $C$. Thus $S_C$ is embedded in $S_{C'}$ and $S_{C'}$ is embedded in $S'$. 
      
  Suppose $\iota: S_C \hookrightarrow S_{C'}$ and $\varphi_{C'} : S_{C'} \hookrightarrow S'$ are the embeddings. Denote $\varphi_C = \varphi_{C'}\circ \iota$.  Let $F_C: S'\rightarrow I $ be an extension of $f|_{S_{C}}$ and $F_{C'}: S'\rightarrow I$ be an extension of $f|_{S_{C'}}$. Note that $F_C$ and $F_{C'}$ differ only in the interior of the disc bounded by $C$. Consequently $F_C \simeq F_{C'}$.  From this we conclude that $\bar{\beta}_C \approx \bar{\beta}_{C'}$. Thus $\bar{\beta}\sim _s \bar{\beta}_C$ by a unique destabilization.

   \begin{figure}[h]\centering
	    \includegraphics[scale=0.4]{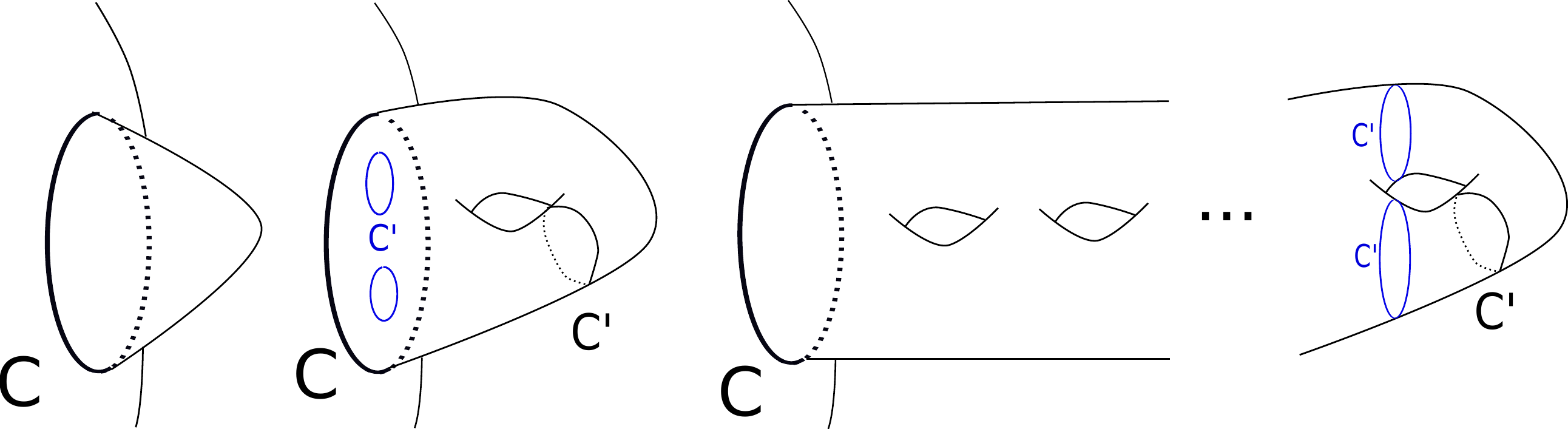} 
    \caption{Generalized destabilization along a curve $C$.}
    \label{fig:pants}
  \end{figure}

   Suppose that the proposition is true when the second connected component is homeomorphic to $\Sigma_{k,1}$. 
 
   Choose a simple essential closed curve $C$ which divides $S$ in two connected components, from which the component that does not contain $\beta$ is homeomorphic to $\Sigma_{k+1,1}$. Take a simple essential closed curve $C'$ in $\Sigma_{k+1,1}$, which is not isotopic to $C$ in $\Sigma_{k+1,1}$. Destabilize $\bar{\beta}$ along $C'$. Then, by induction, $\bar{\beta}$ is stable equivalent to $\bar{\beta}_{C'}$. The curve $C$ is still a simple closed curve in $S_{C'}\setminus \beta$, thus we can destabilize $\bar{\beta}_{C'}$ along $C$. 
   
   By induction hypothesis, the destabilization of $\bar{\beta}_{C'}$ along $C$ is stable equivalent to  $\bar{\beta}_{C'}$. Thus $\bar{\beta}$ is stable equivalent to $(\bar{\beta}_{C'})_C$. 
   
   Without loss of generality we can suppose that $(\varphi_{C'})_C = \varphi_C$, and note that $F_C$ and $(F_{C'})_C$ differ by an isotopy in the disc bounded by $C$. Consequently $(\bar{\beta}_{C'})_C \approx \bar{\beta}_C$ and $\bar{\beta}\sim_s \bar{\beta}_C$.
\end{proof}

\begin{definition}
Given two abstract braid diagrams $\bar{\beta}=(S,f,\beta,\epsilon)$ and $\bar{\beta'}=(S,f',\beta ',\epsilon ')$, we say that they are related by a \textit{Reidemeister move} or simply by an \textit{$R$-move} if, up to isotopy, $f=f'$ and there exists a neighbourhood $D$ in $S$,  homeomorphic to a disc, such that $\beta \setminus D = \beta ' \setminus D$, $\epsilon |_{\beta \setminus D} = \epsilon '|_{\beta ' \setminus D}$, and inside $D$ we can transform $\beta$ into $\beta '$ by a Reidemeister move and isotopy (Figure \ref{Rmoves}). The equivalence relation generated by the $R$-moves is called {\it Reidemeister equivalence} or simply {\it $R$-equivalence}. We denote it by $\bar{\beta } \sim_R \bar{\beta '}$.
\end{definition}

\begin{definition}
 Let $\sim$ be the equivalence relation on the abstract braid diagrams on $n$ strands generated by the compatibility, stability and Reidemeister moves.  The equivalence classes of abstract braid diagrams are called abstract braids, and the set of abstract braids is denoted by $AB_n$. 
\end{definition}

 \begin{remark}\label{rmk:bGDisoCom}
  The definition of braid Gauss diagram is extended in a natural way to the set of abstract braid diagrams. The braid Gauss diagram of an abstract braid diagram is invariant under compatility (resp. under isotopy) and stability.  

Thus, there is a well defined map from $ABD_n$ to $bGD_n$, which associates to each abstract braid diagram its braid Gauss diagram. This map is well defined up to compatibility and stability. By abuse of notation we denote the induced map still by $G$.
\end{remark}

 Recall that the set of braid Gauss diagrams is in bijective correspondence with the set of virtually equivalent virtual braid diagrams. Thus, braid Gauss diagrams are a good tool to prove that abstract braids are a good geometric interpretation of virtual braids. We present an analogous of Theorem \ref{thm:1} for abstract braid diagrams. 

 \begin{claim}\label{claim:thm2}
   The map $G: ABD_n \rightarrow bGD_n$ induces a bijection between the stable equivalence classes of abstract braid diagrams and the braid Gauss diagrams. 
  \end{claim}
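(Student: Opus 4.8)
The plan is to prove the claim in two halves, mirroring the structure of Theorem \ref{thm:1}: first that $G$ descends to a well-defined map on stable equivalence classes and is \emph{surjective}, then that it is \emph{injective}. Surjectivity is essentially free: given any braid Gauss diagram $g \in bGD_n$, Lemma \ref{lemma:0} already produces a virtual braid diagram $\beta \in VBD_n$ with $G(\beta)=g$, and the standard thickening construction of \cite{KK2000} turns $\beta$ into an abstract braid diagram $\bar\beta=(S,f,\beta,\epsilon)$ realizing the same Gauss data. Concretely, I would take a tubular neighborhood of the underlying strand diagram in $\D$, thicken each regular crossing to a disc carrying the positive/negative information, and glue an abstract handle (a twisted band) at each virtual crossing so that the two strands pass over disjoint sheets; the function $f$ is inherited from $\pi_1$ up to the smooth reparametrization used throughout Section 3. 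By Remark \ref{rmk:bGDisoCom}, $G(\bar\beta)=G(\beta)=g$, so $G$ is onto.

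The heart of the argument is injectivity. I would fix two abstract braid diagrams $\bar\beta_0=(S_0,f_0,\beta^0,\epsilon_0)$ and $\bar\beta_1=(S_1,f_1,\beta^1,\epsilon_1)$ with $G(\bar\beta_0)=G(\bar\beta_1)=g$ and show they are stably equivalent (together with compatibility and isotopy, which are already folded into $\sim_s$ through the identifications in the excerpt). The strategy is to reduce everything to the planar case already settled in Theorem \ref{thm:1}. Each abstract braid diagram can be brought, by a sequence of destabilizations along curves in $S_i\setminus\beta^i$, to a representative of minimal genus; and the complement $S_i\setminus N(\beta^i)$ of a neighborhood of the strands retracts onto a thickened diagram. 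The key observation is that an abstract braid diagram together with a choice of cellular decomposition of the complementary regions yields exactly a virtual braid diagram up to virtual and mixed moves: the virtual crossings record precisely the identifications of the abstract surface, and the regular crossings together with $\epsilon$ are read off by $G$. Thus from $\bar\beta_0$ and $\bar\beta_1$ I would extract virtual braid diagrams $\beta_0^{v},\beta_1^{v}\in VBD_n$ with $G(\beta_0^v)=g=G(\beta_1^v)$; by Theorem \ref{thm:1}(2), $\beta_0^v\sim_{vm}\beta_1^v$.

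It then remains to transport the equivalence $\beta_0^v\sim_{vm}\beta_1^v$ back to a stable equivalence of abstract braid diagrams. Here I would check, move by move, that each $V2$, $V3$, $M$, and $M'$ move on a virtual braid diagram corresponds to a stabilization/destabilization or a compatibility (isotopy) of the associated thickened abstract braid diagram: the $V2$ cancellation of two virtual crossings is realized by a destabilization removing a handle, the $V3$ and mixed moves correspond to isotopies that slide strands across the surface without changing genus, hence to compatibilities. Since no move in the list alters the regular crossings or their signs, the function $f$ and the $\epsilon$-labelling are preserved up to isotopy throughout, so the whole chain stays within $\sim_s$. Assembling, $\bar\beta_0\sim_s\bar\beta_0^{\mathrm{thick}}\sim_s\bar\beta_1^{\mathrm{thick}}\sim_s\bar\beta_1$, giving injectivity.

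I expect the main obstacle to be making rigorous the passage \emph{from} an abstract braid diagram \emph{to} a genuine virtual braid diagram with the prescribed Gauss data, i.e.\ the claim that after suitable destabilization the surface $S_i$ can be presented so that its complementary gluings are encoded by virtual crossings in a planar strand diagram. One must ensure that the smooth height function $f$ is compatible with this presentation (so that the resulting planar object genuinely satisfies $\pi_1\circ\beta_k=\mathrm{id}$ after reparametrization), and that different choices of cellular decomposition differ only by $V2$, $V3$, and mixed moves rather than by moves outside the allowed list. This is exactly the point where one needs the technical arc-manipulation machinery of Lemma \ref{lemma:1} through Corollary \ref{cor:4}, now applied on the surface rather than in $\D$, and verifying that those arguments survive the passage to a general genus surface is where the real work lies.
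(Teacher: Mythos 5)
Your surjectivity argument is essentially the paper's: realize $g$ by a planar virtual braid diagram via Theorem \ref{thm:1}, thicken the strand neighbourhoods, separate the two sheets at each virtual crossing, and cap the extra boundary components. That half is fine.

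For injectivity, however, your route runs through a step that you yourself flag as ``where the real work lies'' but never carry out, and that step is a genuine gap: you need to \emph{extract} from each abstract braid diagram $\bar\beta_i$ a planar virtual braid diagram $\beta_i^v$ with the same Gauss data, in such a way that $\bar\beta_i$ is stably equivalent to the re-thickening of $\beta_i^v$. Producing such a planar presentation requires destabilizing down to a ribbon neighbourhood of the diagram and then immersing that ribbon surface in $\D$ so that the strands remain graphs over the height function $f$ (so that $\pi_1\circ\beta_k=\mathrm{id}$ after reparametrization), and then verifying that the round trip (project, then thicken) returns the surface you started from up to compatibility and stabilization. None of this is done, and it is not a routine application of Lemma \ref{lemma:1}--Corollary \ref{cor:4}, which are statements about diagrams already living in $\D$. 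The paper's proof shows that this entire detour through the plane is unnecessary. It observes that $G(\bar\beta)=G(\bar\beta')$ forces the graphs $\Gamma=C_0\cup\beta\cup C_1\subset S$ and $\Gamma'=C_0'\cup\beta'\cup C_1'\subset S'$ to be homeomorphic compatibly with their local crossing data, hence their regular neighbourhoods $N$ and $N'$ are homeomorphic by a map carrying $\Gamma$ to $\Gamma'$. Capping the non-distinguished boundary components of $N$ yields a single abstract braid diagram $\bar\alpha$, and both $\bar\beta$ and $\bar\beta'$ destabilize to $\bar\alpha$ along the non-distinguished boundary curves of $N$ and $N'$ (after arranging $f|_N=f'\circ\varphi$ up to homotopy). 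So the two diagrams are compared directly on the surface, with Theorem \ref{thm:1}(2) never invoked for injectivity. If you want to salvage your approach, the missing planar-presentation lemma must be stated and proved; as written, the chain $\bar\beta_0\sim_s\bar\beta_0^{\mathrm{thick}}$ is asserted rather than established.
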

  \begin{proof}
	Recall that the function is well defined from the stable and compatibility equivalence classes of Abstract braid diagrams to the braid Gauss diagrams (Remark \ref{rmk:bGDisoCom}).  
    
    Now we proof the surjectivity. Let $g \in bGD_n$. Then by Theorem \ref{thm:1} there exists a virtual braid diagram $\beta$ such that $G(\beta) = g$. For each $\beta \in VBD_n$ we can construct an abstract braid diagram $\bar{\beta}$ such that $G(\beta) = G(\bar{\beta})$ as follows.

	Let $\beta$ be a virtual braid diagram, and let $N$ be a regular neighbourhood of $\beta\cup (\{0\}\times I )\cup (\{1\}\times I)$ in $\D = I \times I$ (Figure \ref{fig:thmA0}). Note that $N$ can be seen as the union of regular neighbourhoods of each strand and of the two extremes of the virtual braid diagram. 
	
	Now consider the standard embedding of $\D$ in $\R ^3$.  Around each virtual crossing perturb the regular neighbourhoods of the strands involved in the crossing so that they do not intersect, as pictured in Figure \ref{fig:thmA0}. To the regular neighbourhood of each extreme attach a ribbon so that each extreme is now a cylinder, as in Figure \ref{fig:thmA0}.  In this way we obtain a compact oriented surface, $S'$, with more than the two distinguished boundary components. Consider the function $f:S' \rightarrow [0,1]$ defined by the projection on the first coordinate in $\R ^3$. 
	
	As $S'$ is compact, connected and oriented, it is diffeomorphic to $\Sigma_{g,b}$. We can cap all the non-distinguished boundary components in order to obtain a surface $S$ that has only the distinguished boundary components. There exists an embedding $\varphi: S'\rightarrow S$ and a smooth function $F:S\rightarrow I$, such that $f = F\circ \varphi$. In this way we have constructed an abstract braid diagram $\bar{\beta}=(S,F,\beta, \epsilon)$ such that $G(\beta)=G(\bar{\beta})$. From this we conclude that the function $G$ is surjective. 

 \begin{figure}[h]\centering
    \includegraphics[scale=0.64]{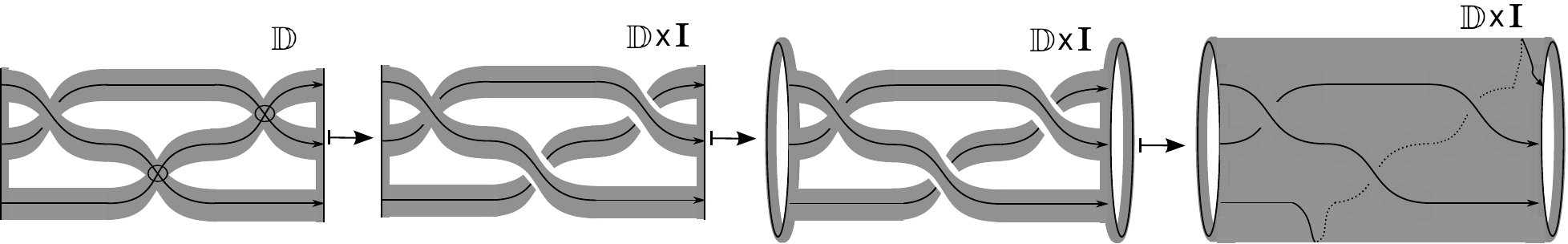} 
    \caption{Construction of $\bar{\beta}$ from $\beta$ such that $G(\bar{\beta}) = G(\beta)$.}
    \label{fig:thmA0}
  \end{figure}    
  
  Now to prove injectivity of the induced function, let $\bar{\beta} = (S,f,\beta,\epsilon)$ and $\bar{\beta}' = (S',f',\beta',\epsilon')$ be two abstract braid diagrams such that $G(\bar{\beta}) = G(\bar{\beta}')$. We claim that $\bar{\beta}$ is stable equivalent to $\bar{\beta}'$.  
  
  Note that $G(\bar{\beta}) = G(\bar{\beta'})$ implies that the graph given by $\Gamma = C_0\cup \beta \cup C_1 \subset S$ is homeomorphic to $\Gamma' =C_0'\cup \beta ' \cup C_1' \subset S'$. Consider a regular neighbourhood of $\Gamma$ in $S$, $N$, and a regular neighbourhood of $\Gamma'$ in $S'$, $N'$. Thus there exists an homeomorphism $\varphi: N\rightarrow N'$, with $\varphi(\Gamma)= \Gamma'$.

  As $N$ is homeomorphic to $\Sigma_{g,k+2}$, it has $k$ non-distinguished boundary components. We can cap the $k$ non-distinguished boundary components of $N$ to obtain a surface $\Sigma$ that has only the two distinguished boundary components. There exists an embedding $\iota: N \rightarrow \Sigma$ and a smooth function $F:\Sigma \rightarrow I$ such that $f|_{N} = F\circ \iota$. In this way we have constructed an abstract braid diagram $\bar{\alpha} = (\Sigma, F,\iota\circ \beta,\epsilon)$ stable equivalent to $\bar{\beta}$.

  On the other hand, note that $f|_{N}$ is homotopic to $g = f'\circ \varphi$ and as $\Sigma \setminus N$ is a disjoint union of circles, then we can extend $g$ to $\Sigma$ so that it is homotopy equivalent to $F$. Thus without loss of generality we can suppose that $f|_N= f'\circ \varphi$. This implies that, up to compatibility and destabilizations along the non-distinguished boundary components of $N$ and $N'$, we can obtain $\bar{\alpha}$ from $\bar{\beta}$ and from $\bar{\beta}'$. Thus $\bar{\beta}$ and $\bar{\beta}'$ are stable equivalent, consequently the induced function on the stable equivalence classes is injective. 

 \end{proof}

\begin{theorem}\label{thm:2}
  There exists a bijection between the abstract braids on $n$ strands and the virtual braids on $n$ strands. 
\end{theorem}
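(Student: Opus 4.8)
The plan is to factor the desired bijection through the braid Gauss diagrams, using the two correspondences already established. By Proposition \ref{prop:bGVb} there is a bijection $bG_n \cong VB_n$, where $bG_n$ denotes the set of $\Omega$-equivalence classes of braid Gauss diagrams. By Claim \ref{claim:thm2}, the map $G$ induces a bijection between the stable-and-compatibility equivalence classes of abstract braid diagrams and the braid Gauss diagrams $bGD_n$. Since $AB_n$ is obtained from these stable-and-compatibility classes by further quotienting by Reidemeister moves, it suffices to show that, under the bijection of Claim \ref{claim:thm2}, the Reidemeister equivalence on abstract braid diagrams corresponds exactly to the $\Omega$-equivalence generated by the $\Omega 2$ and $\Omega 3$ moves on $bGD_n$. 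Granting this, the bijection of Claim \ref{claim:thm2} descends to a bijection
\[
AB_n \;\cong\; bGD_n/\Omega \;=\; bG_n,
\]
and composing with Proposition \ref{prop:bGVb} yields $AB_n \cong VB_n$.

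To match the two equivalence relations I would argue in both directions, mirroring the proof of Proposition \ref{prop:bGVb}. For the first direction, suppose $\bar\beta \sim_R \bar\beta'$. An $R$-move is, by definition, supported inside a disc $D \subset S$ homeomorphic to a planar disc, on which $f=f'$; inside $D$ the abstract braid diagram is indistinguishable from a planar virtual braid diagram, so the effect on $G(\bar\beta)$ is computed by the same local picture used for virtual braid diagrams (cf.\ Figure \ref{fig:R3label}), namely an $\Omega 3$ move for an $R3$ and an $\Omega 2$ move (or the identity) for an $R2$. Hence $G(\bar\beta)$ and $G(\bar\beta')$ are $\Omega$-equivalent, which shows that the induced map on quotients is well defined.

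For the converse direction, suppose $g = G(\bar\beta)$ and $g'$ are related by a single $\Omega 2$ or $\Omega 3$ move. By Theorem \ref{thm:1} and the construction in the proof of Proposition \ref{prop:bGVb}, there exist virtual braid diagrams $\beta, \beta' \in VBD_n$ with $G(\beta)=g$ and $G(\beta')=g'$, where $\beta'$ is obtained from $\beta$ by a single $R$-move supported in a subinterval $I_0$. Applying the thickening construction of Claim \ref{claim:thm2} to $\beta$ and $\beta'$ produces abstract braid diagrams $\bar\beta_0, \bar\beta_0'$ realizing $g$ and $g'$; since this construction is built from a regular neighbourhood and the $R$-move is supported in a planar disc, it lifts to an $R$-move $\bar\beta_0 \sim_R \bar\beta_0'$. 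Finally, by the injectivity part of Claim \ref{claim:thm2}, any abstract realizations of $g$ and $g'$ are stably equivalent to $\bar\beta_0$ and $\bar\beta_0'$, whence
\[
\bar\beta \;\sim_s\; \bar\beta_0 \;\sim_R\; \bar\beta_0' \;\sim_s\; \bar\beta',
\]
so $\bar\beta$ and $\bar\beta'$ are equivalent as abstract braids.

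The step I expect to be the main obstacle is this converse direction: one must verify that the $R$-move genuinely lifts through the thickening construction and that the injectivity of Claim \ref{claim:thm2} can be applied to relate arbitrary realizations to the thickened ones. This is precisely where the slack provided by stable and compatibility equivalence—the slack that Claim \ref{claim:thm2} was designed to exploit—is needed, since a priori the strands involved in an $\Omega 3$ move need not lie in a common planar disc of $S$. Once both directions are in place the two equivalences coincide, the bijection descends, and the theorem follows.
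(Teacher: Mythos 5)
Your proposal is correct and follows essentially the same route as the paper: both factor the bijection through $bG_n$ via Claim \ref{claim:thm2} and Proposition \ref{prop:bGVb}, check that $R$-moves on abstract braid diagrams induce $\Omega 2$/$\Omega 3$ moves on the braid Gauss diagrams, and then realize an $\Omega$ move as an $R$-move on a common surface after a stabilization. The one point to make explicit is that for an $\Omega 2$ move the two thickened regular neighbourhoods are \emph{not} the same surface, so the $R2$-move does not lift verbatim but only after gluing a disc (a stability move, Figure \ref{fig:thmA1}); your chain $\bar\beta \sim_s \bar\beta_0 \sim_R \bar\beta_0' \sim_s \bar\beta'$ absorbs this harmlessly since the abstract-braid equivalence already includes stability, which is exactly how the paper resolves it.
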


\begin{proof}
  We need to verify that the function induced by $G$, from $AB_n$ to $bG_n$, is well defined and that it remains injective. By abuse of notation we denote the induced map still by $G$.

Let $\bar{\beta}= (S,f,\beta,\epsilon)$ and $\bar{\beta}'= (S,f,\beta',\epsilon')$ be two abstract braid diagrams related by an $R$-move. We need to see that $G(\bar{\beta})$ is related to $G(\bar{\beta}')$ by an $\Omega 2$ or an $\Omega 3$ move. By definition of an $R$-move, there exists a neighbourhood, $D$, diffeomorphic to a disc, such that $\beta$ and $\beta'$ coincide outside $D$. Up to isotopy we can suppose that in the interval $f(D)=f'(D)$ there are no other crossings that the involved on the $R$-move. In this way to perform an $R$-move in $D$ is equivalent to perform an $\Omega 2$ or an $\Omega 3$ move in the braid Gauss diagram. Consequently  $G$ is well defined from $AB_n$ to $bG_n$. 

To prove the injectivity, let $\bar{\beta}$ and $\bar{\beta}'$ be two abstract braids diagrams such that $G(\bar{\beta})$ and $G(\bar{\beta}')$ are related by an $\Omega 2$ move. Note that the strands involved in the $\Omega 2$ move of $\beta$ (resp. of $\beta'$) in the regular neighbourhood constructed in the proof of Claim \ref{claim:thm2} look either as in the left hand side or as in the right hand side of Figure \ref{fig:thmA1} (resp. right hand side or left hand side). Deform the regular neighbourhood of the right hand side by gluing a disc in the middle, so that it looks as in the center of Figure \ref{fig:thmA1}. Then we can embed both diagrams in the same surface and relate them by a $R2$ move. Then $\bar{\beta}$ and $\bar{\beta'}$ are related by a stability and a Reidemeister move.  The case when $G(\beta)$ and $G(\beta')$ are related by an $\Omega 3$ move is proved similarly and illustrated in Figure \ref{fig:thmA2}. Thus $G$ is injective and the theorem is true. 
\end{proof}

 \begin{figure}[h]\centering
    \includegraphics[scale=0.7]{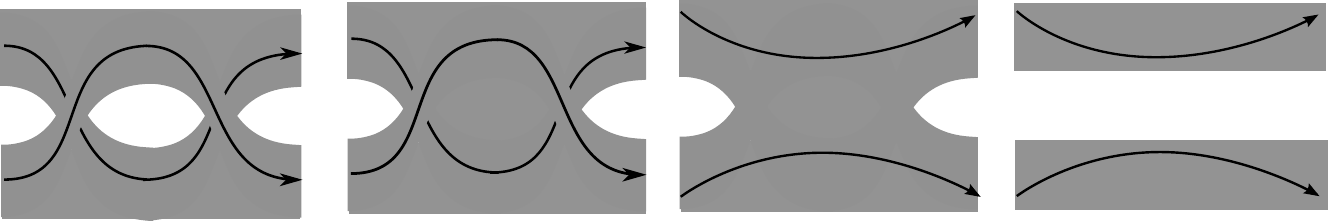} 
    \caption{Strands involved in the $\Omega 2$ move.}
    \label{fig:thmA1}
  \end{figure}    
 
 \vspace{-8pt} 
 \begin{figure}[h]\centering
    \includegraphics[scale=0.7]{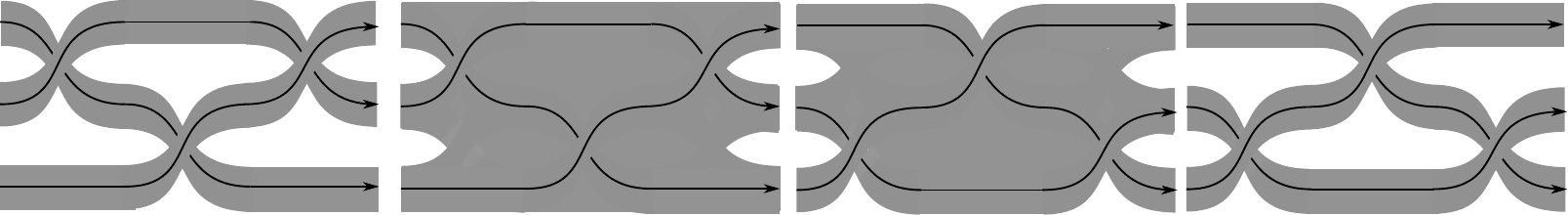} 
    \caption{Strands involved in the $\Omega 3$ move.}
    \label{fig:thmA2}
  \end{figure}    

As a consequence of the proof of the last theorem we have the next corollary.

\begin{corollary}
 Given an abstract braid diagram $\bar{\beta}$. Let $[\bar{\beta}]_s$ be its stable equivalence class. There exists a unique, up to compatibility, $\bar{\alpha}\in [\bar{\beta}]_s$, such that for all $\bar{\beta}' \in [\bar{\beta}]_s$, $\bar{\alpha}$ is obtained from $\bar{\beta}'$ by a finite number of destabilizations. 
\end{corollary}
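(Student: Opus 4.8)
The plan is to promote the canonical representative produced inside the injectivity argument of Claim \ref{claim:thm2} to the minimal object asserted here, and to read off both its universal and its uniqueness properties from the fact that it depends only on the braid Gauss diagram. Fix $\bar{\beta}$ and set $g = G(\bar{\beta})$; by Claim \ref{claim:thm2} every $\bar{\beta}' \in [\bar{\beta}]_s$ has the same braid Gauss diagram $g$. For each such $\bar{\beta}' = (S',f',\beta',\epsilon')$ let $\Gamma' = C_0' \cup \beta' \cup C_1'$ and let $N'$ be a regular neighbourhood of $\Gamma'$ in $S'$; capping the non-distinguished boundary components of $N'$ yields an abstract braid diagram $\bar{\alpha}'$ stable equivalent to $\bar{\beta}'$, whose underlying surface is a regular neighbourhood of the spine $\Gamma'$ with its extra boundary circles capped. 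I take $\bar{\alpha} := \bar{\alpha}$ to be the one built from $\bar{\beta}$ itself.

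First I would check that $\bar{\alpha}'$ is reached from $\bar{\beta}'$ purely by destabilizations. The non-distinguished boundary circles of $N'$ are disjoint essential simple closed curves in $S' \setminus \beta'$, and performing a generalized destabilization along each of them produces exactly $\bar{\alpha}'$; by the earlier proposition relating generalized destabilizations to finitely many ordinary destabilizations, each of these is realised by a finite sequence of genus-lowering destabilizations. Hence $\bar{\alpha}'$ lies below $\bar{\beta}'$ and is obtained from it by finitely many destabilizations. This settles existence once the various $\bar{\alpha}'$ are known to be mutually compatible.

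The key step, and the one I expect to be the main obstacle, is to prove $\bar{\alpha}' \approx \bar{\alpha}$ for every $\bar{\beta}' \in [\bar{\beta}]_s$. The point is that $g$ determines the spine $\Gamma'$ not merely as an abstract graph but as a \emph{ribbon graph}: at each crossing the cyclic order of the four half-edges is the standard transversal pattern fixed by the orientation of $S'$ and the sign $\epsilon'$, while the global gluing of these local pictures along the strands and the two boundary circles is precisely the combinatorial data recorded by the arrows and the permutation of $g$. Consequently $\Gamma \cong \Gamma'$ by a homeomorphism respecting the distinguished boundary components and their marked points, and this extends to an orientation-preserving homeomorphism of the regular neighbourhoods and then of the capped surfaces. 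Since $f$ is prescribed on the spine by $f \circ \beta_k(t) = t$ and its extension over the capping discs is unique up to isotopy, this homeomorphism realises the required compatibility $\bar{\alpha} \approx \bar{\alpha}'$. What must be done with care here is to verify that the ribbon structure of $\Gamma$ is fully encoded by the braid Gauss diagram and that the homeomorphism can be chosen to respect both the marked points and the function.

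Finally, for uniqueness, suppose $\bar{\alpha}_1, \bar{\alpha}_2 \in [\bar{\beta}]_s$ both enjoy the stated universal property. Applying the property of $\bar{\alpha}_1$ to $\bar{\beta}' = \bar{\alpha}_2$ shows that $\bar{\alpha}_1$ is obtained from $\bar{\alpha}_2$ by finitely many destabilizations, and symmetrically $\bar{\alpha}_2$ from $\bar{\alpha}_1$. Each nontrivial destabilization cuts along an essential non-separating curve and caps, strictly lowering the genus; comparing genera in both directions forces both sequences to be empty, so $\bar{\alpha}_1 \approx \bar{\alpha}_2$. Together with the existence established above, and noting that the common floor $\bar{\alpha}$ is exactly the capped regular neighbourhood of $\Gamma$, this yields the unique minimal-genus representative of $[\bar{\beta}]_s$ up to compatibility.
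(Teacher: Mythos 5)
Your proposal is correct and follows essentially the same route as the paper: the corollary is stated there as a direct consequence of the proof of Theorem \ref{thm:2} (via the injectivity argument of Claim \ref{claim:thm2}), where the canonical representative is exactly the capped regular neighbourhood of the spine $\Gamma = C_0 \cup \beta \cup C_1$, determined up to compatibility by the braid Gauss diagram, and reached from any representative by destabilizing along the non-distinguished boundary components of that neighbourhood. Your added care about the ribbon structure of $\Gamma$ and the genus-comparison argument for uniqueness only make explicit what the paper leaves implicit.
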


\section{Minimal realization of an abstract braid}

  Given an abstract braid diagram $\bar{\beta}=(S,f,\beta,\epsilon)$ we call the genus of $\bar{\beta}$ to the genus of $S$. We denote it by $g(\bar{\beta})$.   
  
  Recall that $ABD_n$ denotes the set of equivalence classes of abstract braid diagrams, identified up to isotopy and compatibility equivalence. Note that the genus of an abstract braid diagram is preserved by the isotopy and compatibility equivalence. Thus we can define the genus of an element of $ABD_n$.  From now on we will confuse an abstract braid diagram with its compatibility and isotopy equivalence class. 
  
  On the other hand $AB_n$ denotes the set of stability and Reidemeister equivalence classes of abstract braid diagrams. The Reidemeister equivalence preserves the genus of an abstract braid. Denote by $TAB_n$ the set of isotopy, compatibility and Reidemeister equivalence classes of abstract braid diagrams. 
  
  Denote by $[\bar{\beta}]$ the stability and Reidemeister equivalence class of the abstract braid diagram $\bar{\beta}$. Given $[\bar{\beta}]\in AB_n$ the stability equivalence defines an order on $[\bar{\beta}]$ given by $\bar{\beta} < \bar{\beta}'$ if $\bar{\beta}$ is obtained from $\bar{\beta}'$ through Reidemeister and destability moves.  Note that a destabilization always reduces the genus of an abstract braid diagram and the genus is a non negative number. 
  
   The aim of this section is to prove that two minimal elements in $[\bar{\beta}]\in AB_n$ are related by a finite number of isotopies, compatibilities, and Reidemeister moves, that is, they represent the same element in $TAB_n$. 
   
  Recall that there is a bijective correspondence between $AB_n$ and $VB_n$ (Theorem \ref{thm:2}). In particular, for a virtual braid $\beta$ there exists a distinguished  topological representative of $\beta$, given by its minimal representative $\bar{\beta}\in TAB_n$. 
  
  Another straightforward consequence is that we can define the genus of a virtual braid as the genus of the minimal topological representative of $\beta$, and this is an invariant of the virtual braid, i.e. its value does not change up to isotopy and virtual, Reidemeister and mixed moves. 
  
  A regular braid is a virtual braid that has only regular crossings. A corollary of the previous discussion is that if a virtual braid can be reduced to a regular braid, then necessarily its genus must be zero. Eventhough, there are some virtual braids whose genus is zero and that are not regular, for example consider the virtual braid $\beta = \sigma_1 \tau_1$,  we have that $g(\beta)=0$, but it is not a regular braid (Figure \ref{fig:noRegular}).

  \begin{figure}[h]\centering
    \includegraphics[scale=0.55]{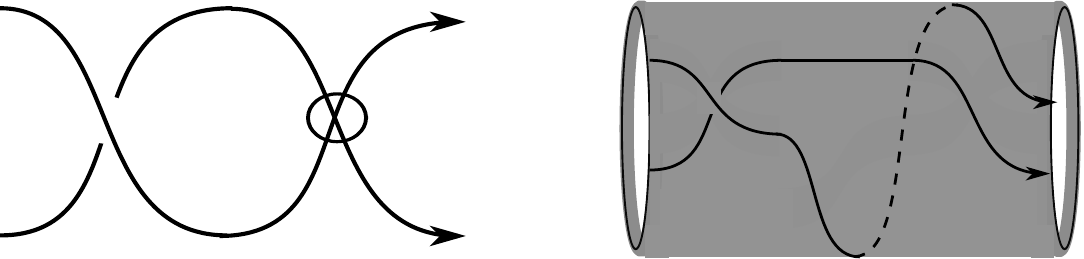} 
    \caption{No regular braid with genus $0$.}
    \label{fig:noRegular}
  \end{figure} 

   Regular braid diagrams are projections of geometric braids in $\D \times I$ on $\D$. Is well known that regular braids coincide with isotopy classes of geometric braids identified up to isotopy. In order to have a similar result for abstract braids, we need to define a geometric object in a three dimensional space, such that when it is projected on a two dimensional space we recover the Abstract braid diagrams.

\begin{definition}
	A {\it braid in a thickened surface on $n$ strands} is a triple, $\bar{\beta}=(M_S,F,\beta)$, such that: 
	\begin{enumerate}
		\item There exists a compact, connected and oriented surface $S$, such that $M_S = S\times I$.
		\item The boundary of $S$ has only two connected components, i.e. $\partial S = C_0 \sqcup C_1$, with $C_0 \approx S^{1} \approx C_1$, called \textit{distinguished boundary components}. 
		\item Each boundary component of $S$ has $n$ marked points, say $K_0 = \{a_1,\dots,a_n \} \subset C_0$ and $K_1 = \{b_1,\dots,b_n \} \subset C_1$. Such that: 
			\begin{enumerate}
				\item The elements of $K_0$ and $K_1$ are lineary ordered.
				\item Let $\kappa_0: S^{1} \rightarrow C_0$ and $\kappa_1 : S^{1} \rightarrow C_1$ be parametrizations of $C_0$ and $C_1$ compatible with the orientation of $S$. Up to isotopy we can put $a_k= \kappa_0(e^{\frac{2\pi i}{k}})$ and $b_k = \kappa_1(e^{-\frac{2\pi i}{k}})$ for $k\in \{1, \dots, n\}$.  
			\end{enumerate} 
		\item $F$ is a smooth function, $F: M_S\rightarrow I$ such that, for $i=0,1$  $$F^{-1}(\{ i \}) = C_i \times I.$$
		\item $\beta$ is an $n$-tuple of curves $\beta = (\beta_1,\dots,\beta_n)$ with:
		   \begin{enumerate}
		     \item For $k=1,\dots, n$, $\beta_k : I \rightarrow M_S$.
		     \item For $k=1,\dots, n$, $\beta_k(0)=(a_k,\frac{1}{2})$.
		     \item There exists $\sigma \in S_n$ such that for $k=1,\dots, n$, $$\beta_k(1)= (b_{\sigma(k)},\frac{1}{2}).$$
		     \item For $k=1,\dots, n$ and $t\in I$, $F\circ \beta_k(t)= t$.
		     \item For $i\neq j$, $\beta_i \cap \beta_j = \emptyset$.
		   \end{enumerate}
	\end{enumerate}
\end{definition}

From now on we fix $n\in \N$ and we say braids in a thickened surface instead of braids in a thickened surface on $n$ strands.

\begin{definition}
  An {\it isotopy} of braids in a thickened surface is a family of braids in a thickened surface $G = \{ \bar{\beta}^{s}= (M_S,F^{s},\beta^{s}) \}_{s\in I}$, such that:
  \begin{enumerate}
    \item For all $s\in I$, $K_0^s = K_0^0$ and $K_1^s = K_1^0$.
    \item For $k=1,\dots, n$, $H_k$ is continuous, where:
     \[ \begin{split}
         H_k : & I \times I \rightarrow M_S \\
               & (s,t) \mapsto \beta_k^{s}(t).
         \end{split}\]
    \item The function $H$ is smooth, where:
     \[ \begin{split}
         H : & I \times M_S \rightarrow I \\
               & (s,x) \mapsto F^{s}(x).
         \end{split}\]
  \end{enumerate}
  We say that $\bar{\beta}^{0}$ and $\bar{\beta}^{1}$ are isotopic and we denote it by $G:\bar{\beta}^{0}\simeq \bar{\beta}^{1}$. 
\end{definition}

\begin{definition}
 Given two thickened braid diagrams $\bar{\beta} = (M_S,F,\beta)$ and $\bar{\beta}' = (M_{S'},F',\beta')$, we say that they are \textit{compatible} if there exists a diffeomorphism $\varphi:M_S \rightarrow M_{S'}$ such that $F = F'\circ \varphi$ and $\beta' = \varphi \circ \beta$. We denote it by $\bar{\beta} \approx \bar{\beta}'$. Note that the compatibility relation is an equivalence relation. 
\end{definition}

  Fix a thickened surface $M_S$. Given an isotopy between two braids in $M_S$, we can decompose the isotopy in a sequence of isotopies so that, in each step, only one strand moves and a bigon is formed by the initial and terminal positions of that strand. Since the bigon is contained in a disc, the projection of this move on the surface looks like Figure \ref{fig:DeltaM}.
	
  \begin{figure}[h]\centering
    \includegraphics[scale=0.8]{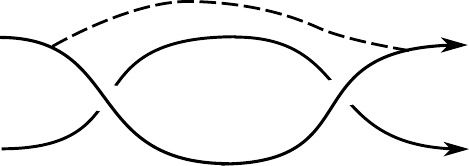} 
    \caption{$\Delta$-move.}
    \label{fig:DeltaM}
  \end{figure} 

  Such moves are called $\Delta$-moves and generate the $\Delta$-equivalence of abstract braid diagrams on $S$. Thus, there is a bijective correspondence between the $\Delta$-classes of abstract braid diagrams in $S$ and the isotopy classes of braids in $M_S$.   
  
  On the other side, the $\Delta$-equivalence generates the Reidemeister moves $R2a$, $R2b$ and $R3$ and viceversa, a $\Delta$-move can be expressed as a finite sequence of Reidemeister moves \cite[pp. 19-24]{MK99}. Consequently we have the next lemma.  

\begin{lemma}\label{lema:ReqTCB}
 There is a bijective correspondence between isotopy and compatibility classes of braids in thickened surfaces and $TAB_n$. We call the elements of $TAB_n$, \textit{thickened abstract braids} (on $n$ strands).
\end{lemma}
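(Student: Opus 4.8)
The plan is to realize the correspondence through the projection--lift pair foreshadowed in the discussion preceding the statement, and then to check that it descends to the prescribed equivalence classes. Working first over a fixed surface $S$, given a braid in the thickened surface $\bar{\beta}=(M_S,F,\beta)$ with $M_S=S\times I$, I would define $\Pi(\bar{\beta})$ to be the abstract braid diagram obtained by composing $\beta$ with the product projection $p:S\times I\to S$, perturbing to general position, taking for the height function the $f:S\to I$ induced by $F$ (after a preliminary isotopy straightening the level sets so that $F=f\circ p$, which also gives $f^{-1}(\{i\})=C_i$ and $f\circ(p\circ\beta_k)(t)=t$), and defining $\epsilon$ at each crossing by the sign of the difference of the $I$-coordinates of the two strands, i.e. by which strand passes over. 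Conversely, for an abstract braid diagram $(S,f,\beta,\epsilon)$ I would let $\Lambda$ push, in a neighbourhood of each crossing, the over-strand into $S\times(\tfrac12,1]$ and the under-strand into $S\times[0,\tfrac12)$ according to $\epsilon$, leaving the remainder on the central level $S\times\{\tfrac12\}$ and setting $F:=f\circ p$; this produces disjoint curves, i.e. a braid in $M_S$, and manifestly $p\circ\beta_k=\gamma_k$ so that $F\circ\beta_k(t)=t$.

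First I would verify that $\Pi$ and $\Lambda$ are mutually inverse for fixed $S$ up to isotopy. Projecting a lift returns the original diagram exactly, since the pushes are supported in crossing neighbourhoods and do not change $S$-coordinates; lifting a projection returns the original braid up to an isotopy that only readjusts the height away from the central level, and this is precisely the over/under ambiguity recorded by $\epsilon$. Next, the two notions of equivalence are matched by the two facts established just above the statement: the correspondence between $\Delta$-classes of abstract braid diagrams on $S$ and isotopy classes of braids in $M_S$ shows that $\Pi$ and $\Lambda$ are well defined and inverse on isotopy classes modulo $\Delta$-moves, and the identification of $\Delta$-equivalence with Reidemeister equivalence \cite[pp.~19--24]{MK99} upgrades this to a bijection between isotopy classes of braids in $M_S$ and the isotopy-plus-Reidemeister classes of abstract braid diagrams on $S$.

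It then remains to fold in compatibility, which is what allows $S$ to vary. In one direction a compatibility diffeomorphism $\varphi:M_S\to M_{S'}$ with $F=F'\circ\varphi$ and $\beta'=\varphi\circ\beta$ should be shown to project to a diffeomorphism of surfaces realizing the compatibility of $\Pi(\bar{\beta})$ and $\Pi(\bar{\beta}')$; in the other direction a compatibility diffeomorphism $\psi:S\to S'$ of diagrams lifts to $\psi\times\mathrm{id}_I:M_S\to M_{S'}$, which respects $F,F'$ and the strands and is therefore a compatibility of braids in thickened surfaces. Applying $\Pi$ and $\Lambda$ to representatives thus descends to a well-defined pair of maps between the isotopy-and-compatibility classes of braids in thickened surfaces and $TAB_n$, and by the fixed-$S$ step these are mutually inverse, which is the asserted bijection.

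The main obstacle is the compatibility step: an arbitrary diffeomorphism $\varphi:S\times I\to S'\times I$ need not respect the product structure and so need not descend to the surfaces directly. I would resolve this using the level functions, isotoping $\varphi$ rel boundary and rel strands — using $F=F'\circ\varphi$ together with $F\circ\beta_k(t)=t$ — into the standard form $\psi\times\mathrm{id}_I$, so that the induced $\psi:S\to S'$ is genuinely a compatibility of abstract braid diagrams. A second, more routine point is that projecting a general ambient isotopy of braids in $M_S$, rather than a single bigon move, yields a finite sequence of $\Delta$-moves on $S$; but this is exactly the decomposition of an isotopy into elementary single-strand bigon moves asserted before the lemma, so it can be invoked rather than reproved.
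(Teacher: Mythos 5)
Your proposal follows essentially the same route as the paper: the paper also reduces the statement to the correspondence between isotopy classes of braids in $M_S$ and $\Delta$-classes of diagrams on $S$ (via decomposing an isotopy into single-strand bigon moves) and then invokes the equivalence of $\Delta$-moves with the Reidemeister moves from \cite[pp.~19--24]{MK99}. The paper treats the projection/lift and the compatibility step as immediate and does not spell them out, so your additional care with the product structure of $\varphi:S\times I\to S'\times I$ is a welcome elaboration rather than a departure.
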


%
 
 From now on we will think the elements of $TAB_n$ as isotopy classes of thickened abstract braids.

\begin{definition}
  Let $\bar{\beta}= (M_S,F,\beta)\in TAB_n$. Given $A,B\subset M_S$ we say that \textit{$A$ is isotopic to $B$ relative to $\partial M_S$} if there exists a continuous function $H:A \times I \rightarrow M_S$ such that:
  \begin{enumerate}
    \item $H_0 = id_{A}$ and $H_1 (A) = B$.
    \item For all $s\in I$, $H_s$ is an embedding.
    \item For all $s\in I$, $H_s(A\cap \partial M_S) \subset \partial M_S$.
  \end{enumerate} 
  In particular $A$ is diffeomorphic to $B$, and $H$ induces an isotopy of $A\cap \partial M_S$  and $B\cap \partial M_S$ in $\partial M_S$. 
\end{definition}
  
\begin{definition}
  Given $\bar{\beta}=(M_S,F,\beta)\in TAB_n$. 
  \begin{enumerate}
    \item A {\it vertical annulus in $\bar{\beta}$} is an annulus $A \subset M_S \setminus \beta$, such that $A=C\times I \subset S\times I$ with $C$ a simple closed curve in $S$. 
    \item A {\it destabilization} of $\bar{\beta}$ is an annulus $A \subset M_S \setminus \beta$ isotopic to a vertical annulus $C\times I$ relative to $\partial M_S$, with $C$ essential and non-separating in $S$. 
    \item A {\it destabilization move} on $\bar{\beta}$ along a destabilization $A$, is to cut $M_S$ along $A$, cap the two boundary components with two thickened discs and extend the function to the obtained manifold. We also say to {\it destabilize} $\bar{\beta}$ along $A$ and we denote the obtained thickened abstract braid by $\bar{\beta}_A$. 
    \item The equivalence relation generated by these moves in the set of thickened abstract braids is called {\it stable equivalence}.   
  \end{enumerate}  

\end{definition}

As a consequence of Lemma \ref{lema:ReqTCB}, the definition of destabilization of a braid in a thickened surface is equivalent to the destabilization of an abstract braid diagram identified up to Reidemeister, isotopy and compatibility equivalence. Consequently we obtain the next proposition. 

\begin{proposition}
	The abstract braids are in bijective correspondence with the braids in thickened surfaces identified up to stable equivalence.  
\end{proposition}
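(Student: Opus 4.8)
The plan is to reduce the statement almost entirely to Lemma \ref{lema:ReqTCB} together with the observation recorded in the paragraph immediately preceding it, so that the only genuinely new content is checking that the two notions of stable equivalence agree under the bijection already in hand. First I would recall that Lemma \ref{lema:ReqTCB} furnishes a bijection $\Phi$ between the isotopy and compatibility classes of braids in thickened surfaces and the set $TAB_n$, which consists of abstract braid diagrams identified up to isotopy, compatibility and Reidemeister moves. Since an abstract braid (an element of $AB_n$) is by definition an element of $TAB_n$ further identified under stable equivalence, it suffices to prove that $\Phi$ carries the stable equivalence of braids in thickened surfaces (generated by destabilization moves along destabilization annuli) onto the stable equivalence that passes from $TAB_n$ to $AB_n$. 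Granting this, $\Phi$ descends to a bijection between the two quotients; the quotient on the thickened side is exactly ``braids in thickened surfaces up to stable equivalence'', and the quotient on the diagram side is exactly $AB_n$.

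Next I would establish the correspondence of moves in both directions. In one direction, let $\bar{\beta}=(M_S,F,\beta)$ be a braid in a thickened surface and let $A$ be a destabilization annulus; by definition $A$ is isotopic relative to $\partial M_S$ to a vertical annulus $C\times I$ with $C$ essential non-separating in $S$ and $C\times I\subset M_S\setminus\beta$. Cutting $M_S$ along $A$ and capping with two thickened discs projects, under $M_S=S\times I\to S$, to cutting $S$ along $C$ and capping, which is precisely a destability move on the corresponding abstract braid diagram; this is the content of the paragraph preceding the statement, which I would cite rather than reproduce. In the other direction, a destability move of an abstract braid diagram is performed along an essential non-separating simple closed curve $C$ in $S\setminus\beta$, and this lifts to the vertical annulus $C\times I\subset M_S\setminus\beta$, which is a destabilization annulus in the sense required; the smooth extension of $F$ over the two capping thickened discs matches the extension of the height function guaranteed in the abstract destabilization. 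Thus a single destabilization corresponds to a single destabilization in both directions.

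Finally, since the generators of the two stable equivalence relations correspond under $\Phi$, and $\Phi$ is already a bijection before quotienting, the equivalence relations they generate correspond as well, so $\Phi$ descends to a bijection on the quotients, which is the assertion of the proposition. The main obstacle is the lifting direction together with the compatibility of the height functions: one must verify that isotoping the annulus $A$ to a vertical annulus can be carried out within $M_S\setminus\beta$ relative to $\partial M_S$, and that the capping discs admit a product height structure compatible with $F$, so that isotopy classes on one side genuinely match those on the other. Once this single-move correspondence is secured, passing to the generated equivalence relations and descending to quotients is formal.
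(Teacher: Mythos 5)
Your proposal is correct and follows essentially the same route as the paper: the paper derives the proposition directly from Lemma \ref{lema:ReqTCB} together with the observation that, under that bijection, destabilizations of braids in thickened surfaces correspond to destabilizations of abstract braid diagrams, so the bijection descends to the stable-equivalence quotients. Your write-up simply makes explicit the move-by-move correspondence that the paper leaves as a one-sentence remark.
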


Recall that the stability equivalence induces an order in $TAB_n$. This order is generated by destabilizations, i.e. given $\bar{\beta}$ and $\bar{\beta}'$, if there exists a destabilization, $A$, of $\bar{\beta}'$,  such that $\bar{\beta} \approx \bar{\beta}'_A$, then $\bar{\beta} < \bar{\beta}'$.

\begin{definition}
  Given $\bar{\beta}\in TAB_n$, a \textit{descendent} of $\bar{\beta}$ is a thickened abstract braid $\bar{\beta}'$ such that $\bar{\beta}' < \bar{\beta}$.  An \textit{irreducible descendent} of $\bar{\beta}$ is a descendent of $\bar{\beta}$ that does not admit any destabilization. 
\end{definition}


%

   Given $\bar{\beta}\in TAB_n$. Let $A \subset M_S \setminus \beta$ be an annulus isotopic to a vertical annulus $A'=C\times I$ relative to $\partial M_S$. If $C$ is not essential, we say that $A$ is {\it not essential}. Suppose $A=C\times I$ is vertical and not essential, hence $C$ bounds a disc in $S$. Let $D_0$ be the disc bounded by $C\times \{0\}$ in $S\times \{0\}$, and $D_1$ be the disc bounded by $C\times \{1\}$ in $S\times \{1\}$. Then $A\cup D_0 \cup D_1$ is homeomorphic to a sphere that bounds a ball in $M_S \setminus \beta$. To express this we say that $A$ bounds a ball, and we refer to such ball as the ball bounded by $A$. 

\begin{theorem}\label{thm:3}
	Given $[\bar{\beta}]\in AB_n$ there exists a unique irreducible descedent of $\bar{\beta}$ in $TAB_n$.
\end{theorem}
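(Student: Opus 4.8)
The statement splits into existence of an irreducible descendent and its uniqueness in $TAB_n$, and existence is the easy half. The genus is a nonnegative integer that is invariant under isotopy, compatibility and (surface-fixing) Reidemeister moves, so it descends to a well-defined function on $TAB_n$, and every destabilization move strictly lowers it by one: cutting along an essential non-separating curve and capping the two resulting boundary circles with disks drops the genus by exactly one. Hence any chain of destabilizations issuing from a representative of $[\bar{\beta}]$ terminates after finitely many steps, producing an element of $TAB_n$ admitting no further destabilization, i.e. an irreducible descendent. The real content is uniqueness.

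For uniqueness, let $\bar{\alpha}$ and $\bar{\alpha}'$ be two irreducible elements lying in the same class $[\bar{\beta}]\in AB_n$. By definition of $\sim_s$ they are joined by a finite zigzag of stabilization and destabilization moves in $TAB_n$; recall that in $TAB_n$ a Reidemeister move keeps the ambient surface fixed, so it contributes nothing to the genus and can be absorbed silently into the bookkeeping. The plan is to work in the thickened model of Lemma \ref{lema:ReqTCB}, where each elementary move is recorded by a vertical annulus $A=C\times I\subset M_S\setminus\beta$ with $C$ essential and non-separating in $S$, and to show that such a zigzag can be shortened until it is trivial, forcing $\bar{\alpha}\approx\bar{\alpha}'$.

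The engine of the argument is a local \emph{commute-or-cancel} lemma. Suppose $\bar{\gamma}$ is obtained from some $\bar{\beta}_1$ by a stabilization, recorded by the belt annulus $A_1$ of the attached handle, and that $\bar{\beta}_2$ is obtained from $\bar{\gamma}$ by destabilizing along an annulus $A_2$. First I would isotope $A_1$ and $A_2$ inside $M_S\setminus\beta$, keeping them vertical, so as to minimise the number of components of $C_1\cap C_2$ in $S$. If $C_1$ and $C_2$ are disjoint, the two moves are supported in disjoint subsurfaces and commute: one may first destabilize along $A_2$ and then stabilize along $A_1$, yielding an intermediate braid of genus one lower than $\bar{\gamma}$ and hence lowering the peak of the zigzag. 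If $C_1$ and $C_2$ meet, an innermost-arc and bigon argument on $S$ (the complementary pieces being controlled because the curves avoid the strands) either removes an intersection, contradicting minimality, or exhibits $C_1$ and $C_2$ as isotopic curves cobounding a region meeting no strand; in the latter case the handle created by the stabilization is precisely the one removed by the destabilization, the two moves cancel, and $\bar{\beta}_2\approx\bar{\beta}_1$ in $TAB_n$. I would also record the symmetric diamond statement for two destabilizations of the same braid: disjoint destabilizing curves can be carried out in either order to a common descendent, and isotopic ones give compatible results.

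Combining these with the termination of existence, I would run the standard mountain-pass reduction: repeatedly push destabilizations to the left of stabilizations and cancel inverse pairs, each step strictly decreasing the maximal genus occurring along the zigzag between $\bar{\alpha}$ and $\bar{\alpha}'$. Since both endpoints are irreducible, hence of minimal genus in their class, the fully reduced zigzag can contain no genuine stabilization or destabilization, so $\bar{\alpha}$ and $\bar{\alpha}'$ differ only by isotopy, compatibility and surface-fixing Reidemeister moves, that is, they coincide in $TAB_n$. I expect the main obstacle to be the intersection analysis inside the commute-or-cancel lemma: in contrast with Kuperberg's closed-surface setting, here every surgery must be performed in $M_S\setminus\beta$ while preserving verticality of the annuli, disjointness from the braid, and the level function $F$ together with the two distinguished boundary components, so guaranteeing that the curves can be made disjoint or isotopic without ever crossing a strand is the delicate point on which the whole argument rests.
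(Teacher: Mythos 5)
Your global skeleton is sound and close to the paper's in spirit: existence follows from the genus dropping under destabilization, and uniqueness reduces to a confluence statement for two destabilizations of the same thickened braid (your ``commute-or-cancel'' peak lemma is the same local statement as the paper's claim that any two destabilizations of $\bar{\beta}$ are descendent equivalent, fed into a minimal-genus induction). The gap is in how you propose to prove that local lemma. You want to isotope both annuli $A_1$ and $A_2$ so that they are simultaneously vertical, $A_i=C_i\times I$, and then run an innermost-arc and bigon argument on the curves $C_1, C_2$ in $S$. This cannot be done in general: a destabilization is only required to be an annulus in $M_S\setminus\beta$ that is isotopic rel $\partial M_S$ to a vertical annulus, the isotopy is not required to avoid $\beta$, so the vertical position may be obstructed by the braid, and two destabilizations need not admit simultaneous vertical positions in the complement of $\beta$. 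This is precisely why the theorem is stated and proved in $TAB_n$ rather than for curves on $S$. Moreover, even granting verticality, your dichotomy ``either an intersection can be removed or $C_1$ and $C_2$ are isotopic'' fails on a surface: two essential non-separating simple closed curves in minimal position can intersect essentially without being isotopic, there is then no bigon to remove, and neither the commuting nor the cancelling branch of your lemma applies.

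The paper's proof fills exactly this hole by doing genuine three-manifold topology in $M_S\setminus\beta$: after disposing of the disjoint case (your commuting case), it puts $A_1$ and $A_2$ in general position, classifies the components of $A_1\cap A_2$ as horizontal circles or vertical arcs in each annulus (innermost-disc surgery kills inessential circles and non-vertical arcs, using that such curves are null-homotopic in $M_S$ while horizontal circles and vertical arcs are not), eliminates horizontal circles by splicing $A_1''\cup A_2'$ into a new destabilization $A_3$ equivalent to $A_2$ but meeting $A_1$ in fewer curves, and eliminates vertical arcs by analyzing the boundary components of a regular neighbourhood of $A_1\cup A_2$: either one of them is itself a destabilization disjoint from both $A_1$ and $A_2$, giving descendent equivalence, or all of them bound balls, which forces the cores of $A_1$ and $A_2$ to be null-homotopic, a contradiction. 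None of these steps has an analogue at the level of curves on $S$, so to complete your argument you would need to replace the surface-level bigon analysis by this (or an equivalent) intersection analysis of the annuli inside the thickened surface.
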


\begin{proof}
	Let $[\bar{\beta}]\in AB_n$. Suppose that $[\bar{\beta}]$ has two irreducible descendents. In this case $[\bar{\beta}]$ has a representative $\bar{\beta}=(M_S,F,\beta)$, such that $S$ is of minimal genus among the representatives of $[\bar{\beta}]$ admitting two different irreducible descendents. 
	
	Since each destabilization reduces the genus, by minimality of the genus of $S$ each destabilization of $\bar{\beta}$ has a unique irreducible descendent. Two destabilizations of $\bar{\beta}$ are called {\it descendent equivalent} if they have the same irreducible descendent. 
	
	We claim that all destabilizations in $\bar{\beta}$ are descendent equivalent. 
	 Suppose there exist two destabilizations $A_1$ and $A_2$ of $\bar{\beta}$ descendent inequivalent. 
	
\begin{claim}\label{claim:NoEmpty}
	The intersection of $A_1$ and $A_2$ is nonempty.
\end{claim}
	
	\begin{proof}
		Suppose $A_1$ and $A_2$ are disjoint. We can destabilize $\bar{\beta}$ along $A_1$ and then along $A_2$ and vice-versa. In both cases we obtain a common descendent, i.e. $(\bar{\beta}_{A_1})_{A_2}\approx (\bar{\beta}_{A_2})_{A_1}$. This is a contradiction. 
	\end{proof}	
	
	Therefore, we can suppose $A_1$ and $A_2$ intersect transversally and so that the number of curves in the intersection ($m_{1,2}\geq 1$) is minimal. Furthermore, we can choose $A_1$ and $A_2$ so that $m_{1,2}$ is minimal among inequivalent pairs of destabilizations of $\bar{\beta}$.
	
	The intersection between two transversal surfaces is a disjoint union of $1$-manifolds. A curve in $A_1\cap A_2$ is thus either a circle or an arc. A {\it horizontal circle} in an annulus $A$ is a circle that does not bound a disc in $A$ (Figure \ref{fig:verHor}). A {\it vertical arc} in an annulus $A$ is a simple arc in $A$ such that its extremes connect the two boundary components of $A$ (Figure \ref{fig:verHor}). 
	
	Given a horizontal circle $C$ in an annulus $A$, it divides $A$ in two annuli $A'$  and $A''$  (Figure \ref{fig:verHor}) such that: $$\partial A' = (\partial A \cap (S\times \{0\}) \cup C \qquad \text{ and } \qquad \partial A'' = (\partial A \cap (S\times \{1\}) \cup C.$$
 \begin{figure}[h]\centering
    \includegraphics[scale=0.35]{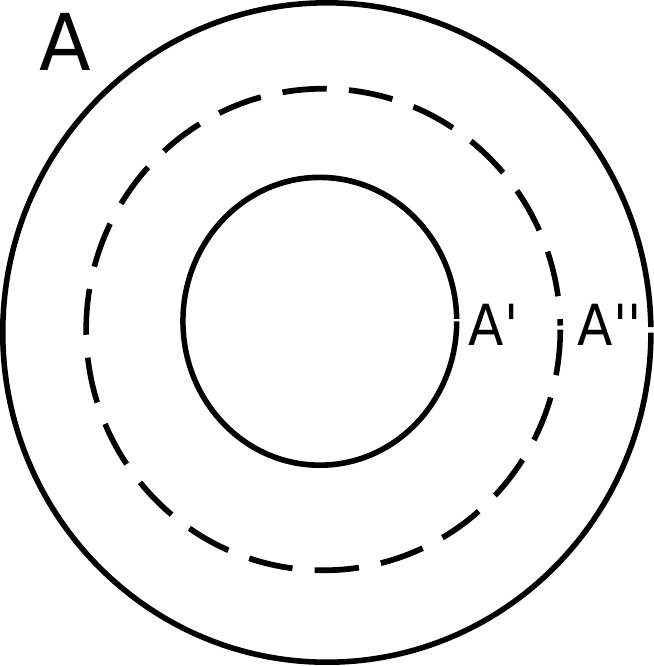} \qquad  \includegraphics[scale=0.35]{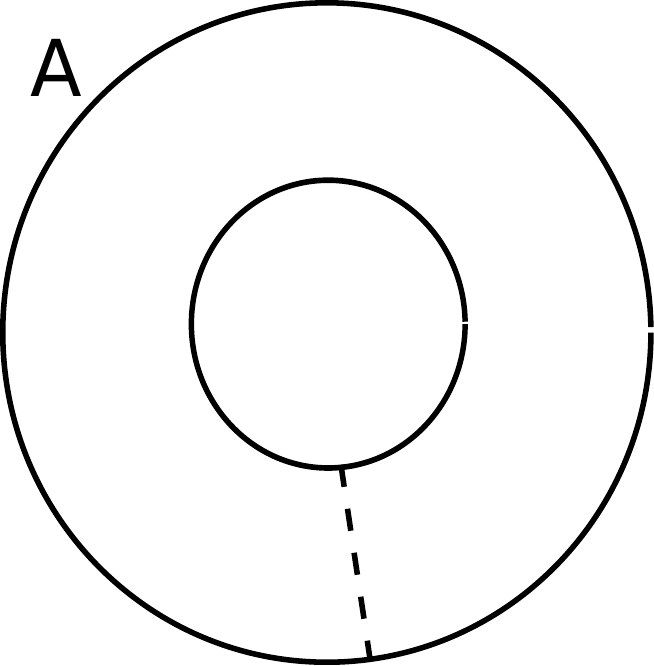}
    \caption{Horizontal circle and vertical arc in $A$.}
    \label{fig:verHor}
  \end{figure}
	
\begin{claim}\label{claim:HorVer}
	All the 1-manifolds in $A_1\cap A_2$ are either horizontal circles or vertical arcs in $A_1$ and in $A_2$.
\end{claim}	
	
	\begin{proof}
	 Suppose there exists $C\subset A_1 \cap A_2$ such that  $C$ is a non-horizontal circle in $A_1$. Thus, the circle $C$ bounds a disc $D$ in $A_1$, in particular it is null-homotopic in $M_S\setminus \beta$. On the other hand if $C$ is horizontal in $A_2$ it is homotopic to an essential circle in $S$ and so it is not null-homotopic in $M_S$. Therefore $C$ is non-horizontal in $A_2$. 
	
	Suppose that $C$ is innermost (i.e. $int(D)\cap A_2 = \emptyset$). Consider a regular neighbourhood of $D$ in $M_S\setminus \beta$, $N(D)$. The boundary of $N(D)$, $\partial N(D)$, intersects $A_2$ in two disjoint circles $C'$ and $C''$. The circle $C'$ (resp. $C''$) bounds a disc $D'$ (resp. $D''$) in $\partial N(D)$ (Figure \ref{fig:claim2}). The surface $A_2\setminus  N(D)$ has two connected components that we can complete with $D'$ and $D''$ in order to obtain two surfaces say $A_2'$ and $A_2''$. They can be spheres, annuli or discs in $M_S$. 
	
	Since $C$ is non-horizontal in $A_2$ and $C$ is innermost in $A_1$, necessarily, up to exchanging $A_2'$ with $A_2''$, $A_2'$ is a sphere and $A_2''$ is an annulus isotopic to $A_2$ (Figure \ref{fig:claim2}). By construction $A_1\cap A_2''$ has less connected components than $A_1\cap A_2$. This is a contradiction. We conclude that all the circles in $A_1 \cap A_2$ are horizontal in $A_i$ for $i=1,2$.
  \begin{figure}[h]\centering
    \includegraphics[scale=0.5]{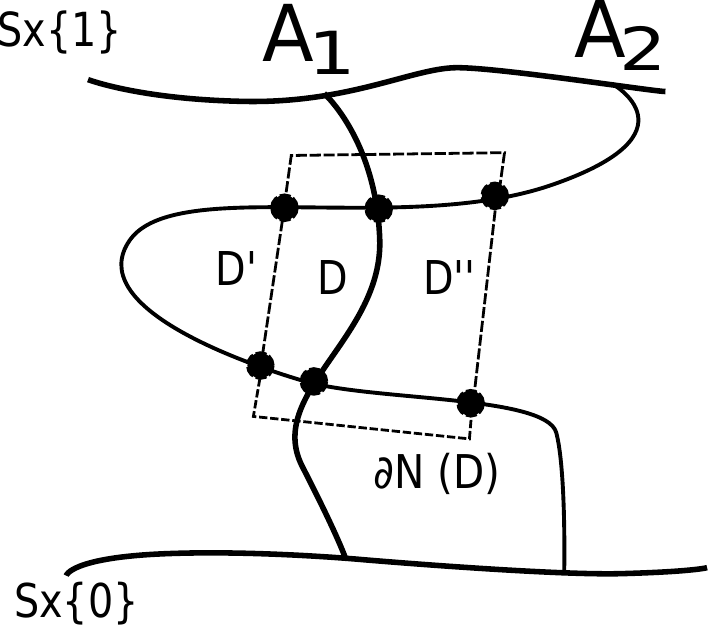} \qquad     \includegraphics[scale=0.5]{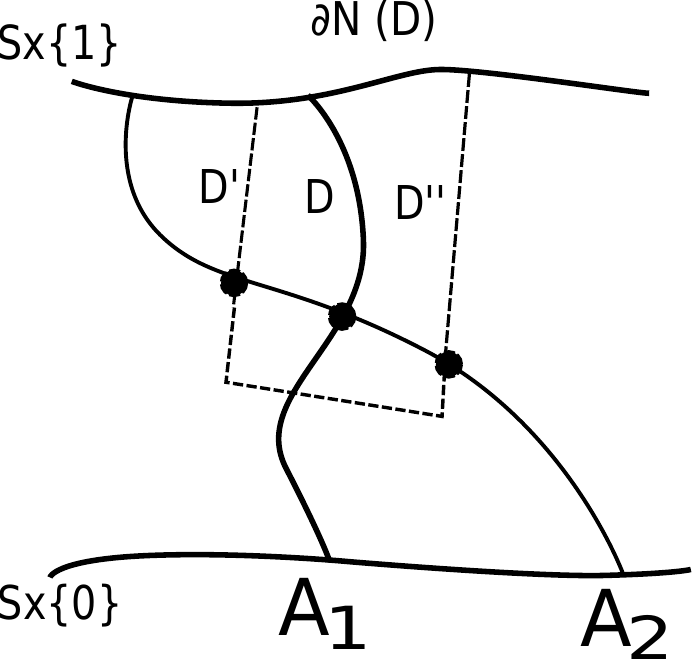}
    \caption{A non-horizontal circle and a non-vertical arc in $A_1$.}
    \label{fig:claim2}
  \end{figure}	
	
  Let $C\subset A_1\cap A_2$ be a non-vertical arc in $A_1$. Hence, the extremes of $C$ are in the same component of $\partial A_1$. Let $\alpha$ be the segment of the component of $\partial A_1$ that joins the extremes of $C$ so that $C\cup \alpha$ is a simple closed curve that bounds a disc $D$ in $A_1$. In particular $C$ is null-homotopic in $M_S$ relative to $\partial M_S$, consequently, $C$ is also a non vertical arc in $A_2$. 

	Suppose that $C$ is innermost, in the sense that $A_2 \cap int(D) = \emptyset$. Let $N(D)$ be a regular neighbourhood of $D$ in $M_S \setminus \beta$. The boundary of $N(D)$, $\partial N(D)$, intersects $A_2$ in two disjoint non-vertical arcs, $C'$ and $C''$. With a similar construction as for $C$, we can find arcs $\alpha '$ and $\alpha ''$ in $\partial N(D) \cap \partial M_S$ such that $C'\cup \alpha '$ (resp. $C''\cup \alpha ''$) bounds a disc $D'$ (resp. $D''$) in $\partial N(D)$. The surface $A_2\setminus N(D)$ has two connected components that we can complete with $D'$ and $D''$ in order to obtain two surfaces $A_2'$ and $A_2''$.  
	
	Since $C$ is non-vertical in $A_2$ and $C$ is innermost in $A_1$, necessarily, up to exchanging $A_2'$ with $A_2''$, $A_2'$ is a disc and $A_2''$ is an annulus isotopic to $A_2$ (Figure \ref{fig:claim2}). By construction $A_1\cap A_2''$ has less connected components than $A_1\cap A_2$ which is a contradiction. We conclude that all the arcs in $A_1 \cap A_2$ are vertical in $A_i$ for $i=1,2$.		
	\end{proof}		
	
\begin{claim}\label{claim:noHorC}
	The intersection $A_1\cap A_2$ does not contain any horizontal circle. 
\end{claim}
	
\begin{proof}
	Let $C\subset A_1\cap A_2$ be a horizontal circle in $A_1$. We have seen that necessarily it is a horizontal circle in $A_2$.  Then $C$ splits $A_1$ and $A_2$ in four annuli, $A_1'$, $A_1''$, $A_2'$ and $A_2''$. We can choose $C$ so that it is exterior in $A_1$ in the sense that $int(A_1'')\cap A_2 = \emptyset$. In this case the annulus $A_1''$ is isotopic to $A_2''$ in $M_S\setminus \beta$ relative to $\partial M_S$. Let $A_3$ be the annulus $A_1'' \cup A_2'$ deformed by an isotopy in such a way that it is in general position with respect to $A_1$ (Figure \ref{fig:horCir}).  
	\begin{figure}[h]\centering
    \includegraphics[scale=0.5]{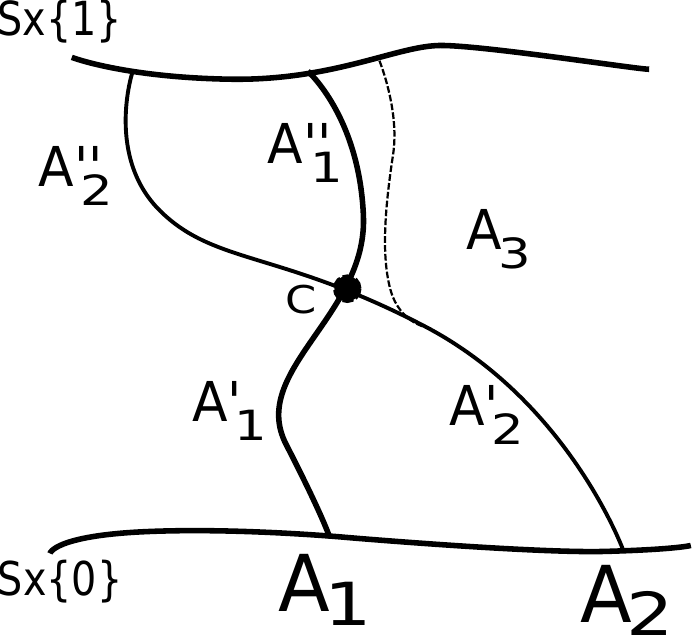}
    \caption{The intersection of two destabilizations along a horizontal circle.}
    \label{fig:horCir}
  \end{figure}		
	
	The number of curves in $A_3\cap A_1$ is strictly less than the number of curves in $A_2\cap A_1$.  Furthermore $A_2$ is isotopy equivalent to $A_1''\cup A_2'$ which is isotopy equivalent to $A_3$ by construction. Hence $A_3$ is a destabilization equivalent to $A_2$, and $A_3\cap A_1$ has strictly less curves than $A_2\cap A_1$. This is a contradiction. 
\end{proof}

\begin{claim}\label{claim:noVer}
	The intersection $A_1\cap A_2$ does not contain any vertical arc. 
\end{claim}

\begin{proof}
 Let $N$ be a regular neighbourhood of $A_1\cup A_2$ in $M_S \setminus \beta$. Then $\partial N$ is a disjoint union of $m$ surfaces in $M_S$. Since there are only vertical arcs in $A_1\cap A_2$  these surfaces are isotopic to vertical annuli, say $\partial N = B_1\sqcup B_2\sqcup \dots \sqcup B_m$. Therefore, either there is a destabilization in $\partial N$ or all the vertical annuli are non-essential.
 
 Suppose that for some $k\in \{1,\dots,m\}$, $B_k$ is a destabilization, i.e. isotopic to an essential vertical annulus. Since $B_k$ is disjoint from $A_1$ and $A_2$, it is descendent equivalent to both. This is a contradiction.

 Suppose that for all $k=1,\dots, m$, $B_k$ is isotopic to a non-essential vertical annulus. Let $E_k$ be the ball bounded by $B_k$ and $S_k = \partial E_k$. 

 We claim that there exists $k\in\{1,\dots,m\}$ such that $A_1\cup A_2 \subset E_k$. This is equivalent to say that there exists $k\in\{1,\dots,m\}$ such that $(A_1\cup A_2) \cap E_k \neq \emptyset$. It is clear that if $A_1\cup A_2 \subset E_k$ then the intersection is nonempty.  On the other hand, suppose there exists $k\in\{1,\dots,m\}$, such that $(A_1\cup A_2) \cap E_k \neq \emptyset$. Since $B_k\cap (A_1\cup A_2)=\emptyset$ and by connectivity of $A_1\cup A_2$ and of $E_k$, we have $A_1\cup A_2 \subset E_k$. 
 
 Now, suppose there exist $j,k\in \{1,\dots,m\}$, such that $j\neq k$ and $S_k\cap S_j\neq \emptyset$. Then, up to exchanging $E_k$ with $E_j$, $E_j \subset E_k$. Note that $B_j$ (resp. $B_k$) separates $M_S$ in two connected components. Furthermore, $B_k$ and $A_1\cup A_2$ (resp. $B_j$ and $A_1\cup A_2$) are in the same connected component of $M_S \setminus B_j$ (resp. $M_S \setminus B_k$). Thus $A_1\cup A_2$ is in the shell bounded by $S_j$ and $S_k$. In particular $A_1\cup A_2 \subset E_k$.

 
 If $S_i\cap S_j = \emptyset$, then $E_i\cap E_j = \emptyset$. Suppose that $E_i\cap E_j \neq \emptyset$. As $B_i \cap B_j = \emptyset$, up to exchanging $E_i$ with $E_j$, $E_i \subset E_j$ and $(S_i\cap \partial M_S) \subset (S_j \cap \partial M_S)$. This is a contradiction. 
 

 Suppose that for all $k=1,\dots,m$, $(A_1\cup A_2) \cap E_k = \emptyset$ and that $S_i\cap S_j= \emptyset$ for $i\neq j$. As $E_i\cap E_j = \emptyset$ for $i\neq j$, the connected components of $M_S \setminus (\cup_{k=1}^m B_k)$ are $int(E_1)$, \dots, $int(E_m)$, and $M_S\setminus (\cup_{k=1}^m E_k$). But $(A_1\cup A_2) \cap E_k = \emptyset$ for $k=1,\dots, m$. Thus $\beta$ and $A_1\cup A_2$ are in the same connected component. This is a contradiction, because $\partial N$ separates $\beta$ and $A_1 \cup A_2$.  We conclude that there exists $k\in\{1,\dots,m\}$ such that $A_1\cup A_2 \subset E_k$.

 For $j=1,2$ and $i=0,1$, set $\gamma_j^i = (S\times \{i\})\cap A_j$. Since $A_1\cup A_2 \subset E_k$, we have $\gamma_j^i \subset E_k$, thus $\gamma_j^i$ is null-homotopic. This is a contradiction. We conclude that there are no vertical arcs in $A_1\cap A_2$.	
\end{proof}

Finally by Claim \ref{claim:NoEmpty}, $A_1\cap A_2 \neq \emptyset$. On the other hand by Claim \ref{claim:HorVer}, $A_1\cap A_2$ has only vertical arcs or horizontal circles. But Claims \ref{claim:noHorC} and \ref{claim:noVer} state that $A_1\cap A_2$ does not have neither horizontal circles nor vertical arcs, thus $A_1\cap A_2=\emptyset$. This is a contradiction. We conclude that there are no descendent inequivalent destabilizations of $\bar{\beta}$, thus there is a unique irreducible descendent. 
\end{proof}

\section*{Acknowledgments}
{\small
    I am very grateful to my Ph.D. advisor, Luis Paris, for helpful conversations, pertinent remarks about the manuscript and many ideas embedded in the article. I would also like to thank to the anonymous reviewer who called my attention on some interesting perspectives and for the remarks and style corrections on the text.  This work was funded by the National Council on Science and Technology, Mexico (CONCyT), under the graduate fellowship 214898.}

\end{document}